\numberwithin{equation}{section} 
\newtheorem{theorem}{Theorem}[section]
\newtheorem{lemma}{Lemma}[section]
\newtheorem{corollary}{Corollary}[section]
\newtheorem{problem}{Problem}[section]
\newcommand{\curl}{\mathop{\mathbf{curl}}\nolimits}
\newcommand{\dive}{\mathop{\mathrm{div}}\nolimits}
\newcommand{\dif}{\overline{\partial}}
\newcommand{\grad}{\mathop{\mathbf{\nabla}}\nolimits}
\newcommand{\norm}[1]{\left\lVert#1\right\rVert}
\newcommand{\set}[1]{\left\{#1\right\}}
\newcommand{\produ}[1]{\left\langle#1\right\rangle}
\newcommand{\M}{M}
\newcommand{\X}{X}
\newcommand{\Y}{Y}
\newcommand{\V}{V}
\newcommand{\R}{\mathbb{R}}
\newcommand{\oc}{\Omega_{{}_{\mathrm C}\!}}
\newcommand{\od}{\Omega_{{}_{\mathrm D}\!}}
\newcommand{\adhod}{\overline{\Omega}_{\mathrm{d}}}
\newcommand{\xho}{\X_h}
\newcommand{\moh}{M_h}
\renewcommand{\H}{\mathrm{H}}
\renewcommand{\L}{\mathrm{L}}
\newcommand{\hcurlo}{\mathbf{H}(\curl;\Omega)}
\newcommand{\hocurlod}{\mathbf{H}_0(\curl;\od)}
\newcommand{\hcurloc}{\mathbf{H}(\curl;\oc)}
\newcommand{\hocurlo}{\mathbf{H}_0(\curl;\Omega)}
\newcommand{\hcurlod}{\mathbf{H}(\curl;\od)}
\newcommand{\hunod}{\H^1(\od)}
\newcommand{\ldos}{\L^2}
\newcommand{\ldoso}{\L^2(\Omega)}
\newcommand{\ldosod}{\L^2(\od)}
\newcommand{\ldoshocurlo}{\L^2(0,T;\hocurlo)}
\newcommand{\ldosoc}{\L^2(\oc)}
\newcommand{\cero}{\boldsymbol{0}}
\newcommand{\ds}{\,ds}
\newcommand{\dt}{\,dt}
\newcommand{\bn}{\mathbf{n}}
\newcommand{\xn}{\mathbf{x}}
\newcommand{\zn}{\mathbf{z}}
\newcommand{\un}{\mathbf{u}}
\newcommand{\fn}{\mathbf{f}}
\newcommand{\vn}{\mathbf{v}}
\newcommand{\E}{\mathcal{E}}
\newcommand{\wn}{\mathbf{w}}
\newcommand{\Hn}{\mathbf{H}}
\newcommand{\vnc}{\vn_{\mathrm{c}}}
\newcommand{\pih}{\Pi_h}
\newcommand{\lodoso}{\L_0^2(\Omega)}
\newcommand{\houno}{\H_0^1(\Omega)}
\newcommand{\poli}{\mathbb{P}}
\newcommand{\unhnm}{\un_h^{n-1}}
\newcommand{\Phnm}{P_h^{n-1}}
\newcommand{\hounoO}{\H_0^1(\Omega)}
\newcommand{\abs}[1]{\left\lvert#1\right\rvert}
\newcommand{\ldoshounod}{\L^2(0,T;\hounoO^d)}
\newcommand{\vc}{\vn_{\!\mathrm{c}}}
\newcommand{\nd}{\mathcal{N}_1}
\newcommand{\unhn}{\un_h^n}
\newcommand{\Phn}{P_h^n}
\newcommand{\Gc}{\Gamma_{{}_{\!\mathrm C}\!}}
\newcommand{\Gd}{\Gamma_{{}_{\!\mathrm D}\!}}
\newcommand{\GI}{\Gamma_{{}_{\!\mathrm I}\!}}
\newcommand{\GE}{\Gamma_{{}_{\!\mathrm E}\!}}
\newcommand{\GJ}{\Gamma_{{}_{\!\mathrm J}\!}}
\newcommand{\nn}{\boldsymbol{n}}
\newcommand{\HGccurloc}{\mathbf{H}_{\Gc}(\curl;\oc)}
\newcommand{\HGIcurlod}{\mathbf{H}_{\GI}(\curl;\od)}
\newcommand{\HHGIcurlod}{\widehat{\mathbf{H}}_{\GI}(\curl;\od)}
\newcommand{\HGIcurlood}{{\mathbf{H}}_{\GI}(\curl^{\cero};\od)}
\newcommand{\HGddivood}{{\mathbf{H}}_{\Gd}(\dive^{0}_{\varepsilon};\od)}
\newcommand{\hdivod}{\mathrm{H}(\dive,\od)}
\newcommand{\HHdivod}{\widehat{\mathrm{H}}(\dive;\od)}
\newcommand{\LdosGd}{\mathrm{L}^2(\Gd)}
\newcommand{\HH}{\mathbb{H}}
\newcommand{\wnd}{\wn_{{}_{\mathrm D}\!}}
\newcommand{\vnd}{\vn_{{}_{\mathrm D}\!}}
\newcommand{\znd}{\zn_{{}_{\mathrm D}\!}}
\newcommand{\gtc}{\boldsymbol{\gamma}_\tau^{{}^{\mathrm C}\!}}
\newcommand{\gtd}{\boldsymbol{\gamma}_\tau^{{}^{\mathrm D}\!}}
\newcommand{\rangoGtoc}{\mathrm{H}^{-1/2}(\dive_{\tau};\partial\oc)}
\newcommand{\rangoGtod}{\mathrm{H}^{-1/2}(\dive_{\tau};\partial\od)}
\newcommand{\etan}{\boldsymbol{\eta}}
\newcommand{\LL}{\mathcal{L}}
\newcommand{\HGdcurloodGI}{{\mathbf{H}}_{\GI}(\dive^{0};\od)}
\newcommand{\hunodGI}{\H_{\GI}^1(\od)}
\newcommand{\hdive}{\mathbf{H}(\dive;\Omega)}
\newcommand{\hodive}{\mathbf{H}_0(\dive;\Omega)}
\newcommand{\HGdcurlood}{{\mathbf{H}}_{\Gd}(\curl;\od)}
\newcommand{\OS}{\Omega_{\mathrm s}}
\newcommand{\bJ}{\mathbf J}
\newcommand{\bE}{\mathbf E}
\newcommand{\bx}{\mathbf x}
\newcommand{\bH}{\mathbf H}
\newtheorem{remark}[theorem]{Remark}
\def\thanksramiro{Universidad del Cauca, Popay\'an, Colombia, 
 email: {\tt rmacevedo@unicauca.edu.co},}
\def\thankschristian{Universidad del Cauca, Popay\'an, Colombia,
                  email: {\tt christiancamilo@unicauca.edu.co},
                }  
\def\thanksbibiana{Universidad Nacional sede Medell\'in, Colombia,
                  email: {\tt blopezr@unal.edu.co}.
                }  
\begin{document}
\title{
Fully discrete finite element approximation for a family of degenerate 
parabolic mixed equations}


\author{
{\sc Ramiro Acevedo}\thanks{\thanksramiro}\quad
{\sc Christian G\'omez}\thanks{\thankschristian}\quad  
{\sc Bibiana L\'opez-Rodr\'{\i}guez}\thanks{\thanksbibiana}
}

\date{Received: date / Accepted: date}

\maketitle 

\begin{abstract}
{The aim of this work is to show an abstract framework to analyze 
the numerical approximation for a family of linear degenerate parabolic mixed equations 
by using a finite element method in space and a Backward-Euler scheme in time. 
We consider sufficient conditions to prove that the fully-discrete problem has a unique solution 
and prove quasi-optimal error estimates for the approximation.
Furthermore, we show that mixed finite element formulations arising from dynamics fluids (\textit{time-dependent Stokes problem)}  and from electromagnetic applications \textit{(eddy current models)}, can be analyzed as applications of the developed theory. Finally, we include numerical tests to illustrate the performance
of the method and confirm the theoretical results.}

\textbf{Keywords:}\,\,{Degenerate parabolic equations, mixed problems, finite element method, fully-discrete approximation, error estimates.}
\end{abstract}
\section{Introduction}\label{sec;introduction}
The classical (non-degenerate) mixed parabolic equations, 
mainly inspired on the Stokes Problem, have been widely studied. 
For instance, 
in \cite{BR} the authors have introduced some abstract framework for that kind of problems.  
However, mixed formulations arising from electromagnetic problems (see, for instance \cite{A1,A2,blrs:13}), 
can not fit in that aforementioned theory, because in these cases the first term inside of the time-derivative is not an inner product,
which implies that the resulting problem is degenerate.

A degenerate parabolic mixed problem 
consists in finding $u\in L^2(0,T;\X)$ and $\lambda\in H^1(0,T;\M)$ such that: 
\begin{equation}
\begin{split}
 &\frac{d}{dt}\left[\langle Ru(t),v\rangle_{\Y}+ b(v,\lambda(t))\right]+\langle Au(t),v\rangle_{\X}
 =\langle f(t),v\rangle_{\X}\qquad \forall v\in\X \quad\text{in }\mathcal{D}'(0,T),\\
\label{PC}
& b(u(t),\mu)=\langle g(t),\mu\rangle_{\M}\qquad \forall\mu\in\M,
\end{split}
\end{equation}
where $\X$, $\Y$ and $\M$ are real Hilbert spaces with the imbedding ${\X\subseteq \Y}$ is continuous and dense, 
$R:Y\to Y'$, $A:X\to X'$ are linear and bounded operators, $b:\X\times\M\to\R$ is a bounded bilinear form,
$f\in L^2(0,T;\X')$ and $g\in L^2(0,T;M')$. 
This kind of problems appears in several applications, for instance, to the approximation of the heat equation by means 
of Raviart-Thomas (RT) elements (see \cite{JT}), to the fluid dynamic equations (see \cite{BR} and \cite{BG}), 
and to electromagnetic applications (see \cite{A1,A2,blrs:13,MS1}). Sufficient conditions to the well-posedness 
of Problem \eqref{PC}  with an appropriate initial condition, are given in the recent paper \cite{AGL}. 
In that work,  the authors combine the linear degenerate parabolic equation theory with the classical Babu$\check{\text{s}}$ka-Brezzi 
Theory to prove the existence and uniqueness of solution, by assuming some reasonable conditions inspired by the application problems. 

%

The purpose of this paper is to provide the analysis for a fully-discrete approximation of the abstract Problem~\eqref{PC}. 
This approximation is obtained by using the finite element method in space with a backward Euler in time. In order to develop the analysis,
it is necessary to assume the conditions considered in \cite{AGL} to ensure the well-posedness of the continuous problem~\eqref{PC}.
Furthermore, to obtain the existence and uniqueness for the fully discrete solutions, the bilinear form induced for the operator $A$ 
have to satisfy a discrete G\r{a}rding-type inequality 
in the discrete kernel of the bilinear form $b$ and this bilinear form $b$ must satisfy the discrete inf-sup condition. 
The discrete inf-sup condition of $b$ plays an important role to adapt the techniques from the error analysis for 
finite element approximations for classical  parabolic problems. In fact, 
this discrete inf-sup condition allows to define some projection operator to the orthogonal of the discrete kernel of $b$, 
which it is necessary to obtain the suitable split of the error to prove quasi-optimal error estimates for the approximation of the main variable of the problem.
Moreover, by using again the discrete inf-sup condition, we can obtain the intermediate term to get the corresponding split for 
the approximation error of the Lagrange Multiplier to show the theoretical convergence of the method.  

About the applications for the theory of the fully-discrete approximation of Problem~\eqref{PC}, we present some problems 
that arise from dynamic fluids and electromagnetic models. Firstly, 
we give the convergence analysis of the approximation of the time-dependent Stokes problem. 
Thus, it can be inferred that the fully-discrete analysis for the non-degenerate mixed problems is a particular case 
of the theory studied in this work. 
Many of the real degenerate parabolic problems mainly come from electromagnetic applications, because the existence of two kind of materials 
(conductors and insulators) lead to the problem has a degenerate character. In fact 
we show two applications for an electromagnetic problem called the eddy current model. The formulations studied here are based in 
a time primitive of the electric field and they were studied respectively in \cite{A1} (for the case of internal density current source) and in \cite{blrs:13}
(for the case of density sources with current excitations). 
The use of the time-primitive of the electric field as main unknown is called 
\textit{modified magnetic vector potential} in the electrical engineering 
literature (see, for instance \cite{EJ}). Additionally, we perform some numerical results
that corroborate the convergence order given for the theory for the model studied in \cite{A1} (see Section \ref{IC}),
since their authors did not present numerical simulation in that work.



The outline of the paper is as follows: In Section \ref{continuo} we recall the main results given in \cite{AGL} about the well-posedness 
of the abstract problem~\eqref{PC}
and the corresponding analysis for its fully-discrete approximation scheme
is presented in Section~\ref{discreto}. The results concerning for error estimates of the fully-discrete approximation of the problem 
are shown in Section~\ref{estimates}. The applications of the theory to the time-dependent Stokes problem and to 
the eddy current model are studied in Section~\ref{aplicaciones}, where we
use the developed abstract theory to
deduce the well-posedness of the discrete problems and the theoretical convergence for their approximations.
Furthermore, we show some numerical results for the first of the eddy current models that confirm the expected convergence 
of the method according to the theory.

\section{An abstract degenerate mixed parabolic problem}\label{continuo}
Let $\X$ and $\Y$ be two real Hilbert spaces such that $\X$ is contained in $\Y$ with a continuous and dense imbedding.
Furthermore, let $\M$ be real reflexive Banach space. Then,  we consider 
the continuous operators $R: \Y\to \Y'$ , $A:\X\to \X'$ and  $b:\X\times \M\to \R$ be a continuous bilinear form.
Let $V$ be the kernel of the bilinear form $b$, i.e.,
\begin{equation}\nonumber
V:=\left\{v\in \X: b(v,\mu)=0\ \ \forall\mu\in \M\right\},
\end{equation}
and denote by $W$ its clausure with respect to the $\Y$-norm, i.e.,
\begin{equation}\nonumber
W:=\overline{V}^{\| \cdot \|_{\Y}}.
\end{equation}
We consider now the following abstract problem

Given $u_0\in\Y$, $f\in\L^2(0,T;\X')$ and $g\in\L^2(0,T;\M')$, the continuous problem is 
\begin{problem}\label{PC1}
 Find $u\in\L^2(0,T;\X)$ and $\lambda\in\L^2(0,T;\M)$ satisfying the following equations:
\begin{align*}
& \frac{d}{dt}\left[\langle Ru(t),v\rangle_{\Y}+ b(v,\lambda(t))\right]+\langle Au(t),v\rangle_{\X}
=\langle f(t),v\rangle_{\X}&& \forall v\in\X \quad\text{in }\mathcal{D}'(0,T),\\
& b(u(t),\mu)=\langle g(t),\mu\rangle_{\M}&& \forall\mu\in\M,\\
& \langle Ru(0),v\rangle_{\Y}=\langle Ru_0,v\rangle_{\Y}&& \forall v\in \Y.
\end{align*}
\end{problem}
In this order, the hypotheses  that guarantee is a well-posed problem are given by
\begin{itemize}
\item[H1.] The bilinear form $b$ satisfies a continuous \textit{inf-sup} condition, i.e., there exists $\beta>0$ such that
\begin{equation}\nonumber
\underset{v\in\X}{\sup}\frac{b(v,\mu)}{\|v\|_{\X}}\geq \beta \|\mu\|_{\M}\quad\forall \mu\in\M.
\end{equation}
\item[H2.] $R$ is self-adjoint and monotone on $V$, i.e., 
\begin{equation}\nonumber
\langle Rv,w \rangle_{\Y}=\langle Rw,v \rangle_{\Y},\qquad \langle  Rv,v \rangle_{\Y}\geq 0\ \qquad \forall v,w\in V.
\end{equation}
\item[H3.] The operator $A$ is self-adjoint on $V$, i.e.,
\begin{equation}\nonumber
\langle Av,w \rangle_{\X}=\langle Aw,v \rangle_{\X}\ \qquad \forall v,w\in V
\end{equation}
\item[H4.] There exist $\gamma>0$ and  $\alpha>0$ such that
\begin{equation*}\label{garding}
\langle Av,v \rangle_{\X}+\gamma\langle Rv,v \rangle_{\Y}\geq \alpha \|v\|^2_{\X}\ \qquad \forall v\in V.
\end{equation*}
\item[H5.] The initial data $u_0$ belongs to $W$.
\item[H6.] The data function $g$ belongs to $\H^1(0,T;\M')$.
\end{itemize}
\begin{theorem}\label{princonti}
Let us assume that assumptions {H1}--{H6} hold true. Then the Problem \ref{PC1} has a unique solution 
$(u,\lambda)\in\L^2(0,T;\X)\times\H^1(0,T;\M))$ and there exists a constant $C>0$ such that
\begin{equation}\nonumber
\|u\|_{\L^2(0,T;\X)} + \|\lambda\|_{\L^2(0,T;\M)} 
\leq C \left\{
\|f\|_{\L^2(0,T;\X')} + \|g\|_{\H^1(0,T;\M')} + \|u_0\|_{\Y}
\right\}.
\end{equation}
Moreover,  $\lambda(0)=0$.
\end{theorem}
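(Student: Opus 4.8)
The plan is to combine the Babu\v{s}ka--Brezzi machinery (to treat the bilinear form $b$ and reconstruct the multiplier $\lambda$) with the abstract well-posedness theory for linear \emph{implicit} (degenerate) parabolic equations (to solve for $u$ inside the kernel $V$ of $b$); this is the route taken in \cite{AGL}, whose statement we are recalling. \textbf{Step 1 (lifting the constraint).} From H1 and the closed range theorem, the operator $\mathcal{B}\in\mathcal{L}(\X,\M')$ induced by $b$ has a bounded right inverse $\mathcal{B}^{\dagger}\in\mathcal{L}(\M',\X)$ with range in the $\X$-orthogonal complement $V^{\perp}$ of $V$ and $\norm{\mathcal{B}^{\dagger}}\le\beta^{-1}$. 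I would set $u_g:=\mathcal{B}^{\dagger}g$; by H6 this gives $u_g\in\H^1(0,T;\X)$, and $u_g(t)$ satisfies the second equation of Problem~\ref{PC1} for every $t$. \textbf{Step 2 (reduction).} Writing $u=u_g+\tilde u$ with $\tilde u(t)\in V$ and testing the first equation only against $v\in V$, so that $b(v,\lambda(t))$ drops out, one is led to the reduced degenerate parabolic problem
\begin{equation*}
\frac{d}{dt}\langle R\tilde u(t),v\rangle_{\Y}+\langle A\tilde u(t),v\rangle_{\X}=\langle F(t),v\rangle\qquad\forall v\in V\ \text{ in }\mathcal{D}'(0,T),
\end{equation*}
with $F:=f-Au_g-\tfrac{d}{dt}(Ru_g)\in\L^2(0,T;V')$ (note $Ru_g\in\H^1(0,T;\Y')$) and weak initial condition $\langle R\tilde u(0),v\rangle_{\Y}=\langle R(u_0-u_g(0)),v\rangle_{\Y}$ for $v\in W$.

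\textbf{Step 3 (solving the reduced problem).} For this reduced problem I would invoke the abstract existence/uniqueness theorem for linear degenerate parabolic equations: H2 ($R$ symmetric and monotone on $V$), H3 ($A$ symmetric on $V$) and H4 (the G\r{a}rding inequality on $V$) are precisely its hypotheses, while H5, since $u_0\in W=\overline{V}^{\|\cdot\|_{\Y}}$, is what makes $u_0-u_g(0)$ an admissible initial datum once it is read through $R$ (i.e.\ in the $R$-seminorm completion of $V$, rather than as an element of $V$). This yields a unique $\tilde u\in\L^2(0,T;V)$ with $R\tilde u$ admitting a weak time derivative in $\L^2(0,T;V')$ and continuous in the $R$-seminorm. \textbf{Step 4 (a priori bound).} Testing the reduced equation with $v=\tilde u(t)$, using the Lions-type identity $\tfrac{d}{dt}\langle R\tilde u(t),\tilde u(t)\rangle_{\Y}=2\langle\tfrac{d}{dt}(R\tilde u)(t),\tilde u(t)\rangle$ (valid with the regularity of Step 3) and H4 gives
\begin{equation*}
\tfrac12\tfrac{d}{dt}\langle R\tilde u(t),\tilde u(t)\rangle_{\Y}+\alpha\norm{\tilde u(t)}_{\X}^2\le\gamma\langle R\tilde u(t),\tilde u(t)\rangle_{\Y}+\langle F(t),\tilde u(t)\rangle,
\end{equation*}
so Young's inequality and Gr\"onwall's lemma yield $\norm{\tilde u}_{\L^2(0,T;\X)}\le C(\norm{F}_{\L^2(0,T;V')}+\abs{u_0-u_g(0)}_{R})$. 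Bounding $\norm{F}_{\L^2(0,T;V')}\le C(\norm{f}_{\L^2(0,T;\X')}+\norm{g}_{\H^1(0,T;\M')})$, $\abs{u_0-u_g(0)}_{R}\le C(\norm{u_0}_{\Y}+\norm{g(0)}_{\M'})$ and $\norm{g(0)}_{\M'}\le C\norm{g}_{\H^1(0,T;\M')}$ (by $\H^1(0,T)\hookrightarrow C([0,T])$), and adding $\norm{u_g}_{\L^2(0,T;\X)}$, gives the stated bound on $\norm{u}_{\L^2(0,T;\X)}$.

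\textbf{Step 5 (reconstructing $\lambda$ and $\lambda(0)=0$).} Integrating the first equation in time and using the third (initial) relation, the functional
\begin{equation*}
\ell(t)(v):=\langle Ru_0,v\rangle_{\Y}-\langle Ru(t),v\rangle_{\Y}+\int_0^t\big(\langle f(s),v\rangle_{\X}-\langle Au(s),v\rangle_{\X}\big)\ds
\end{equation*}
annihilates $V$ (this is exactly the integrated reduced equation of Step 2), hence $\ell(t)$ lies in the annihilator of $V$ in $\X'$; by H1 there is a unique $\lambda(t)\in\M$ with $b(\cdot,\lambda(t))=\ell(t)$ and $\norm{\lambda(t)}_{\M}\le\beta^{-1}\norm{\ell(t)}_{\X'}$, which gives the bound on $\norm{\lambda}_{\L^2(0,T;\M)}$. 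Since then $t\mapsto\langle Ru(t),v\rangle_{\Y}+b(v,\lambda(t))$ equals $\langle Ru_0,v\rangle_{\Y}+\int_0^t\langle f-Au,v\rangle_{\X}\ds$ and is thus absolutely continuous with derivative $\langle f(t)-Au(t),v\rangle_{\X}$, the first equation of Problem~\ref{PC1} holds; and $\ell(0)=0$ forces $\lambda(0)=0$. Uniqueness follows by taking $f=0$, $g=0$, $u_0=0$: Step 4 forces $\tilde u=0$, hence $u=0$, and then $\ell\equiv 0$ with H1 forces $\lambda\equiv 0$, the normalization $\lambda(0)=0$ removing the additive-constant ambiguity in $\lambda$ that the distributional first equation cannot detect.

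I expect the genuine obstacle to be the time regularity $\lambda\in\H^1(0,T;\M)$ (Problem~\ref{PC1} only asks for $\L^2$): one must show that the annihilator-valued map $t\mapsto\ell(t)$ is $\H^1$ in time, and one cannot simply differentiate $\langle Ru(t),\cdot\rangle_{\Y}$ in $\X'$, because the degeneracy of $R$ only controls $R\tilde u$ on $V$ (where $\tfrac{d}{dt}(R\tilde u)\in\L^2(0,T;V')$ by Step 3), not on $V^{\perp}$. The way through is that $\ell(t)$ annihilates $V$, so only the behaviour of $R\tilde u$ on $V$ and the genuinely $\H^1$-in-time component $u_g$ enter the reconstruction; this is precisely where H6 is indispensable. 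A second, milder technicality, already present in Steps 2--3, is to give the degenerate initial condition $Ru(0)=Ru_0$ a rigorous meaning (only the part of $Ru_0$ that $V$ sees is constrained, which is why H5 requires $u_0\in W$ rather than $u_0\in V$). For the complete argument I refer to \cite{AGL}.
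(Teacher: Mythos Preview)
Your proposal is correct and follows exactly the route the paper has in mind: the paper does not prove Theorem~\ref{princonti} at all but simply refers to \cite[Theorem~2.1]{AGL}, and your Steps 1--5 are a faithful sketch of the argument carried out there (lift the constraint via H1, reduce to a degenerate parabolic problem on $V$ handled by H2--H5, obtain the energy estimate via H4 and Gr\"onwall, and reconstruct $\lambda$ from the integrated equation using H1 again). Your identification of the two delicate points---the $\H^1(0,T;\M)$ regularity of $\lambda$ and the meaning of the degenerate initial condition through $R$---is also accurate and matches what \cite{AGL} has to address.
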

\begin{proof}
(see \cite[Theorem 2.1]{AGL})
\end{proof}
In the following section we present the fully discrete analysis for the Problem  \ref{PC1}.

\section{Fully-discrete approximation for degenerate mixed parabolic problem}\label{discreto}
Let $\{\X_h\}_{h>0}$ and $\{\M_h\}_{h>0}$ be sequences of finite-dimensional subspaces of $\X$ and $\M$, respectively, and let
$\{t_n: =n\Delta t: \ n=0,...,N\}$ be a uniform partition of $[0,T]$ with a step size $\Delta t := T/N$. For any finite sequence $\{\theta^n: \ n=0,...,N\}$ 
we denote
\[
\dif\theta^n:=\frac{\theta^n-\theta^{n-1}}{\Delta t},\qquad n=1,\dots,N.
\]
The fully-discrete approximation of the Problem \ref{PC1}  reads as follows:

\begin{problem} \label{PD1} Find $u_h^n\in\X_h$, $\lambda_h^n\in\M_h$, $n=1,\ldots,N$, such that
\begin{align*}
& \langle R\dif u_h^n,v\rangle_{\Y}+ b(v,\dif\lambda_h^n)+\langle Au_h^n,v\rangle_{\X}
=\langle f(t_n),v\rangle_{\X}&& \forall v\in\X_h,\\
& b(u_h^n,\mu)=\langle g(t_n),\mu\rangle_{\M}&& \forall\mu\in\M_h,\\
& u_h^0= u_{0,h},\\
&\lambda_h^0= 0.
\end{align*}
\end{problem}
This scheme is obtained by using a backward Euler discrete approximation for the time-derivatives. Furthermore, the 
third equation the Problem \ref{PD1} includes a suitable approximation $u_{0,h}$ of the initial data $u_0$ to obtain the convergence of the scheme (see Subsection \ref{estimates} below).


On the other hand, to obtain the well-posedness of the Problem \ref{PD1}, we first notice that by rewriting equations, the solution  
$(u_h^n, \lambda_h^n) \in\X_h\times\M_h$ at each time step have to satisfy the following classical mixed problem:
\begin{align*}
\mathcal{A}(u_h^n,v) + b(v,\lambda_h^n) &= F_n(v)&& \forall v\in X_h,\\
b(u_h^n,\mu)&=\langle g(t_n),\mu\rangle_{\M}&& \forall \mu\in \M_h,
\end{align*}
where
\begin{align*}
\mathcal{A}(w,v)&:=\langle R w,v\rangle_{\Y}+\Delta t \ \langle Aw,v\rangle_{\X},\\
F_n(v)&:=\Delta t\ \langle f(t_n),v\rangle_{\X}+\langle R u_h^{n-1},v\rangle_{\Y}+b(v,\lambda_h^{n-1}).
\end{align*}
Hence, the existence and uniqueness of solution of the Problem \ref{PD1} is obtained by 
assuming the following conditions and using the classical Babuska-Brezzi Theory:
\begin{itemize}
\item[H7.] There exist $\xi_h>0 \textrm{ and }\ \alpha_h>0$ such that
\begin{equation}\nonumber
\langle Av,v \rangle_{\X}+\xi_h \langle Rv,v \rangle_{\Y}\geq \alpha_h \|v\|^2_{\X}\ \qquad \forall v\in V_h,
\end{equation}
where $V_h$ denotes the discrete kernel of $b$ in $\X_h$, i.e.,
\begin{equation*}
V_h:=\{v\in \X_h\,:\,b(v,\mu)=0\ \ \forall\mu\in\M_h\}.
\end{equation*}
\item[H8.] The bilinear form $b: \X_h\times\M_h\to \R$ is bounded and it satisfies the discrete \textit{inf-sup} condition, i.e., there exists $\beta_h>0$ such that
\begin{equation*}
\underset{v\in\X_h}{\sup}\frac{b(v,q)}{\|v\|_{\X}}\geq \beta_h \|q\|_{\M}\quad\forall q\in\M_h.
\end{equation*}
\end{itemize}

\section{Error estimates for the fully-discrete approximation}\label{estimates}
\subsection{Error estimates for main variable $u$}
We need to introduce the operators $\tilde{B}_h:\X\to \M_h'$ and  $B_h:\X_h\to \M_h'$ defined as follows 
\begin{align*}
& \langle\tilde{B}_hv,\mu\rangle_{\M}:=b(v,\mu) \quad\forall v\in\X,\quad \forall \mu\in\M_h,\\
& \langle B_hv,\mu\rangle_{\M}:=b(v,\mu) \quad\forall v\in\X_h,\quad \forall \mu\in\M_h.
\end{align*}
Now, we notice that if we consider the operator $\pih: \X\to \X_h$ characterized by
\begin{equation*}
\pih w\in \X_h:\qquad
(\pih w,z)_{\X}=(w,z)_{\X}\qquad\forall z\in X_h,
\end{equation*}
then there exists $C>0$ independent on $h$ satisfying
\begin{equation}\label{infimoproyector}
\|w-\Pi_h w\|_{\X}\leq C \inf_{z\in \X_h}\|w-z\|_{\X}.
\end{equation}
From the second and third equation of the Problem \ref{PD1}, we obtain
\[
b(u(t_n)-u_h^n,\mu)=0\qquad \forall \mu\in\M_h,
\]
and therefore
\begin{equation*}
\label{DC}
b(u(t_n)-\Pi_h u(t_n),\mu)=b(u_h^n-\Pi_h u(t_n),\mu)\qquad \forall \mu\in\M_h.
\end{equation*}

On the other hand, since $B_h$ satisfies the discrete \textit{inf-sup} condition, for all  $w\in X$ there exists a unique 
$\tilde{\mathcal{P}}_hw\in V_h^\perp\subset X_h$ such that
\[
B_h\tilde{\mathcal{P}}_hw=\tilde{B}_h(w-\Pi_h w), 
\]
or equivalently, 
\[
b(\tilde{\mathcal{P}}_hw,\mu)=b(w-\Pi_h w,\mu)\quad \forall\mu\in M_h.
\]
Furthermore, for all $w\in X$, we have
\begin{align*}
\norm{\tilde{\mathcal{P}}_hw}_X&\leq\frac{1}{\beta_h}\norm{B_h\tilde{\mathcal{P}}_hw}_{M'} 
\leq\frac{1}{\beta_h}\ \underset{\mu\in\M_h}{\sup} \frac{b(\tilde{\mathcal{P}}_hw,\mu)}{\|\mu\|_{\M}}
=\frac{1}{\beta_h}\ \underset{\mu\in\M_h}{\sup} \frac{b(w-\Pi_h w,\mu)}{\|\mu\|_{\M}}
\leq \frac{\|b\|}{\beta_h}\|w-\Pi_h w\|_{\X}.
\end{align*}
Thus, thanks to the triangle inequality, it follows that
\begin{equation}\nonumber
\norm{w-\tilde{\mathcal{P}}_hw-\Pi_h w}_{X}\leq \left(1+\frac{\norm{b}}{\beta_h}\right)\norm{w-\Pi_h w}_{X}.
\end{equation}
Hence, if we define
\[
\mathcal{P}_h w:=\tilde{\mathcal{P}}_h w+\Pi_h w \quad \forall w\in X,
\]
then
\[ 
\norm{w-\mathcal{P}_hw}_{X}\leq \left(1+\frac{\norm{b}}{\beta_h}\right)\norm{w-\Pi_h w}_{X},v
\]
and using \eqref{infimoproyector}, we deduce
\begin{equation}\label{infimoPh}
\norm{w-\mathcal{P}_hw}_{X}\leq C\left(1+\frac{\norm{b}}{\beta_h}\right)\inf_{z\in \X_h}\|w-z\|_{\X}\qquad\forall w\in X.
\end{equation}
Therefore, we can consider the following split of the error
\begin{equation}\label{splitE}
e_h^n:=u(t_n)-u_h^n=\rho_h^n+\sigma_h^n,\qquad n=1,\dots,N,
\end{equation}
where
\begin{equation}
\label{rhoysigma}
\rho_h(t):=u(t)-\mathcal{P}_h u(t),\quad 
\rho_h^n:=\rho_h(t_n),\quad\sigma_h^n:=\mathcal{P}_h u(t_n)-u_h^n.
\end{equation}

Next, in order to obtain the convergence of the method, we first prove the estimate for the last term in \eqref{splitE} 
as in the following Lemma.
\begin{lemma}\label{lemmasigma}
For $n=1,\dots,N$, let $\rho_h^n$ and $\sigma_h^n$ as in \eqref{rhoysigma} and 
\[\displaystyle\tau^n:=\frac{u(t_n)-u(t_{n-1})}{\Delta t}-\partial_t u(t_n).\] 

Assume that 
{$\lambda\in\mathcal{C}^1(0,T;\M)$} and $\{\xi_h\}_{h>0}$, $\{\alpha_h\}_{h>0}$ (see H7) are bounded uniformly in $h$, then provided $\Delta t$ is small enough, there exists a constant $C>0$ independent of $h$ and $\Delta t$, such that 
\begin{equation}\label{result_lemma_sigma}
\begin{split}
&\langle R\sigma_h^n,\sigma_h^n\rangle_{\Y}
+\Delta t\,\sum_{k=1}^n\|\sigma_h^k\|^2_{\X}\\
&\qquad\leq C\left(\langle R\sigma_h^0,\sigma_h^0\rangle_{\Y}
+\Delta t\,\sum_{k=1}^N \left[\|\tau^k\|^2_{\Y}+\|\dif\rho_h^k\|^2_{\Y}+\|\rho_h^k\|^2_{\X}
+\left(\underset{v\in V_h}{\sup}\frac{b(v,\partial_t\, \lambda(t_k))}{\|v\|_{\X}}\right)^2\right]\right).
\end{split}
\end{equation}
\end{lemma}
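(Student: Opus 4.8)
The plan is to derive an energy identity for $\sigma_h^n$ by testing the error equation against $v=\sigma_h^n\in V_h$ and then summing over the time levels. First I would write the discrete error equation: subtracting the first equation of Problem~\ref{PD1} from the continuous equation evaluated at $t_n$ (using that $\partial_t[\langle Ru(t),v\rangle_\Y+b(v,\lambda(t))]+\langle Au(t),v\rangle_\X=\langle f(t),v\rangle_\X$ holds for $v\in\X_h$), and replacing $\partial_t u(t_n)$ by $\dif u(t_n)-\tau^n$ and $\partial_t\lambda(t_n)$ by $\dif\lambda(t_n)$ up to a consistency term, I obtain for all $v\in\X_h$
\[
\langle R\,\dif e_h^n,v\rangle_\Y+b(v,\dif(\lambda(t_n)-\lambda_h^n))+\langle Ae_h^n,v\rangle_\X
=\langle R\tau^n,v\rangle_\Y+\text{(time-consistency term for }\lambda\text{)}.
\]
Now split $e_h^n=\rho_h^n+\sigma_h^n$ and move the $\rho_h$ contributions to the right-hand side. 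Choosing $v=\sigma_h^n\in V_h$ kills the two $b(\cdot,\cdot)$ terms \emph{except} the one involving $\dif\lambda(t_n)$ — but $\dif\lambda(t_n)$ need not lie in the range that annihilates against $V_h$, so that term survives and is controlled by $\Delta t\,(\sup_{v\in V_h} b(v,\partial_t\lambda(t_k))/\|v\|_\X)$ after using $\mathcal{C}^1$-regularity of $\lambda$ to replace $\dif\lambda(t_n)$ by $\partial_t\lambda(t_n)$ plus a remainder absorbed into $\|\tau^k\|$-type terms. This is precisely where the $b(v,\partial_t\lambda(t_k))$ term in \eqref{result_lemma_sigma} comes from.

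The core algebraic step is the identity $\langle R\,\dif\sigma_h^n,\sigma_h^n\rangle_\Y=\tfrac{1}{2\Delta t}\bigl(\langle R\sigma_h^n,\sigma_h^n\rangle_\Y-\langle R\sigma_h^{n-1},\sigma_h^{n-1}\rangle_\Y+\langle R\,\dif\sigma_h^n\Delta t,\dif\sigma_h^n\Delta t\rangle_\Y\bigr)$, valid because $R$ is self-adjoint and monotone on $V$ (H2); the extra squared term is nonnegative and can be dropped. For the $A$-term I use H7: $\langle A\sigma_h^n,\sigma_h^n\rangle_\X\geq\alpha_h\|\sigma_h^n\|_\X^2-\xi_h\langle R\sigma_h^n,\sigma_h^n\rangle_\Y$. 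Multiplying through by $\Delta t$, summing from $k=1$ to $n$, and bounding the right-hand side by Cauchy--Schwarz and Young's inequality (writing $\langle R\,\dif\rho_h^k,\sigma_h^k\rangle_\Y\le\tfrac12\|R\|\,\|\dif\rho_h^k\|_\Y^2+\tfrac12\|R\|\,\|\sigma_h^k\|_\Y^2\le C\|\dif\rho_h^k\|_\Y^2+C\|\sigma_h^k\|_\X^2$, and similarly for the $\tau^k$, $\rho_h^k$, and $\lambda$ terms, using the continuous imbedding $\X\subseteq\Y$ to pass from $\Y$-norms back to $\X$-norms where needed) yields
\[
\langle R\sigma_h^n,\sigma_h^n\rangle_\Y+\alpha_h\,\Delta t\sum_{k=1}^n\|\sigma_h^k\|_\X^2
\le \langle R\sigma_h^0,\sigma_h^0\rangle_\Y+C\,\Delta t\sum_{k=1}^N(\cdots)+(\gamma+\xi_h)\,\Delta t\sum_{k=1}^n\langle R\sigma_h^k,\sigma_h^k\rangle_\Y+C\varepsilon\,\Delta t\sum_{k=1}^n\|\sigma_h^k\|_\X^2.
\]
Since $\{\xi_h\}$ and $\{\alpha_h\}$ are uniformly bounded in $h$, I fix $\varepsilon$ small so that $C\varepsilon<\alpha_h/2$ uniformly, absorb that term on the left, and apply the discrete Gronwall lemma to the sequence $a_n:=\langle R\sigma_h^n,\sigma_h^n\rangle_\Y$, which (for $\Delta t$ small enough that $(\gamma+\xi_h)\Delta t<1$, uniformly in $h$) gives $a_n\le C\bigl(a_0+\Delta t\sum_{k=1}^N(\cdots)\bigr)$ with $C$ independent of $h$ and $\Delta t$; feeding this back into the $\|\sigma_h^k\|_\X^2$ sum gives \eqref{result_lemma_sigma}.

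The main obstacle I anticipate is the careful treatment of the surviving $b$-term: one must verify that testing with $v\in V_h$ genuinely removes $b(v,\dif\lambda_h^n)$ and $b(v,\dif\sigma_h^n)$-type contributions (it does, since $\lambda_h^n,\lambda_h^{n-1}\in\M_h$ and $V_h$ is the discrete kernel) while the \emph{continuous} multiplier increment $\dif\lambda(t_n)$ does \emph{not} pair to zero against $V_h$ — hence the term cannot be eliminated and must instead be estimated by its supremum over $V_h$, producing exactly the last summand in the bound. A secondary technical point is ensuring all constants from Young's inequality, the Gronwall step, and the $\Delta t$-smallness condition are uniform in $h$, which is exactly what the hypothesis that $\{\xi_h\}$, $\{\alpha_h\}$ are $h$-uniformly bounded is there to guarantee; without it the final constant would degenerate.
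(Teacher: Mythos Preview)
Your proposal is correct and follows essentially the same route as the paper's proof: derive the error identity, test with $v=\sigma_h^k\in V_h$, telescope using the monotonicity of $R$, invoke the discrete G\r{a}rding inequality (H7), sum, and close with the discrete Gronwall lemma. Two simplifications worth noting: evaluating the continuous equation pointwise at $t_k$ already produces $b(v,\partial_t\lambda(t_k))$ directly (so no detour through $\overline{\partial}\lambda(t_n)$ and back is needed---the discrete multiplier term $b(v,\overline{\partial}\lambda_h^n)$ vanishes on $V_h$ and what remains is exactly $b(v,\partial_t\lambda(t_k))$); and for the cross terms $\langle R\tau^k,\sigma_h^k\rangle_\Y$, $\langle R\overline{\partial}\rho_h^k,\sigma_h^k\rangle_\Y$ the paper exploits the Cauchy--Schwarz inequality $|\langle Rv,w\rangle_\Y|^2\le\langle Rv,v\rangle_\Y\langle Rw,w\rangle_\Y$ for the monotone operator $R$, so these feed directly into the Gronwall quantity $\langle R\sigma_h^k,\sigma_h^k\rangle_\Y$ rather than requiring absorption into $\alpha_h\|\sigma_h^k\|_\X^2$ via the embedding $\X\subseteq\Y$.
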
	 
\begin{proof}
Let $n\in\{1,\dots,N\}$ and $k\in\{1,\dots,n\}$. Using \eqref{PC1} and \eqref{PD1}, it is straightforward to show that
\begin{equation*}
\langle R\dif \sigma_h^k,v\rangle_{\Y}+\langle A\sigma_h^k,v\rangle_{\X}
=\langle R\tau^k,v\rangle_{\Y}-\langle R\dif\rho_h^k,v\rangle_{\Y}
-\langle A\rho_h^k,v\rangle_{\X}-b(v,\partial_t\, \lambda(t_k)) \qquad \forall v\in V_h.
\end{equation*}
Then, by taking $v:=\sigma_h^k\in V_h$ in this last identity, we have 
\begin{equation}\label{aux1}
\langle R\dif \sigma_h^k,\sigma_h^k\rangle_{\Y}
+\langle A\sigma_h^k,\sigma_h^k\rangle_{\X}
=\langle R\tau^k,\sigma_h^k\rangle_{\Y}
-\langle R\dif\rho_h^k,\sigma_h^k\rangle_{\Y}
-\langle A\rho_h^k,\sigma_h^k\rangle_{\X}
{-b(\sigma_h^k,\partial_t\, \lambda(t_k))}.
\end{equation}
Let $\xi:=\displaystyle\sup_{h>0}\xi_h>0$ and $\alpha:=\displaystyle\inf_{h>0}\alpha_h>0$. Since $R$ is monotone (see H2), it is easily seen that 
\begin{align*}\label{tele-a}
\langle R\dif \sigma_h^k,\sigma_h^k\rangle_{\Y}
&\geq \frac{1}{2\Delta t}\left[\langle R\sigma_h^k,\sigma_h^k\rangle_{\Y}
-\langle R\sigma_h^{k-1},\sigma_h^{k-1}\rangle_{\Y}
\right]\ ,
\end{align*}
thus, from \eqref{aux1} we deduce
\begin{multline*}
\frac{1}{2\Delta t}\left[\langle R\sigma_h^k,\sigma_h^k\rangle_{\Y}
-\langle R\sigma_h^{k-1},\sigma_h^{k-1}\rangle_{\Y}
\right]
+\alpha \|\sigma_h^k\|^2_{\X}-\xi\,\langle R\sigma_h^k,\sigma_h^k\rangle_{\Y}\\
\leq \langle R\tau^k,\sigma_h^k\rangle_{\Y}
-\langle R\dif\rho_h^k,\sigma_h^k\rangle_{\Y}
-\langle A\rho_h^k,\sigma_h^k\rangle_{\X}
{-b(\sigma_h^k,\partial_t\, \lambda(t_k))}.
\end{multline*}
Now, since the operator $R$ is monotone, it satisfies the following Cauchy-Schwarz type inequality
\begin{equation*}\label{cauchy-R}
|\langle Rv,w\rangle_{\Y}|^2\leq \langle Rv,v\rangle_{\Y}  \langle Rw,w\rangle_{\Y}, 
\end{equation*}
then, we have
\begin{align*}
\langle R\sigma_h^k,\sigma_h^k\rangle_{\Y}
-\langle R\sigma_h^{k-1},\sigma_h^{k-1}\rangle_{\Y}
&+\alpha\Delta t \|\sigma_h^k\|^2_{\X}\\
&\leq 
(1+2\xi)\Delta t \langle R\sigma_h^k,\sigma_h^k\rangle_{\Y}
+ 2\Delta t\langle R\tau^k,\tau^k\rangle_{\Y} 
+ 2\Delta t\langle R\dif\rho_h^k,\dif\rho_h^k\rangle_{\Y}
\\
&\quad\quad+\frac2\alpha \Delta t\|A\|^2\|\rho_h^k\|^2_X 
+\frac2\alpha \Delta t
\left(\underset{v\in V_h}{\sup}\frac{b(v,\partial_t\, \lambda(t_k))}{\|v\|_{\X}}\right)^2,
\end{align*}
and using the continuity of $R$, it follows that
\begin{multline*}
\langle R\sigma_h^k,\sigma_h^k\rangle_{\Y}
-\langle R\sigma_h^{k-1},\sigma_h^{k-1}\rangle_{\Y}
+\alpha\Delta t \|\sigma_h^k\|^2_{\X}\\
\leq 
(1+2\xi)\Delta t \langle R\sigma_h^k,\sigma_h^k\rangle_{\Y}
+C\,\Delta t
\left(
\|\tau^k\|_Y^2 
+ \|\dif\rho_h^k\|_Y^2
+\|\rho_h^k\|^2_X 
+\left(\underset{v\in V_h}{\sup}\frac{b(v,\partial_t\, \lambda(t_k))}{\|v\|_{\X}}\right)^2
\right).
\end{multline*}
Then, by summing over $k$, we obtain
\[
\langle R\sigma_h^n,\sigma_h^n\rangle_{\Y}-\langle R\sigma_h^0,\sigma_h^0\rangle_{\Y}
+\alpha\Delta t\sum_{k=1}^n\|\sigma_h^k\|^2_{\X}
\leq
(1+2\xi)\,\Delta t\sum_{k=1}^n\langle R\sigma_h^k,\sigma_h^k\rangle_{\Y}
+C\,\Delta t\,\sum_{k=1}^n\Theta_k^2,
\]
where
\[
\Theta_k^2:=\|\tau^k\|^2_{\Y}+\|\dif\rho_h^k\|^2_{\Y}+\|\rho_h^k\|^2_{\X}
+\left(\underset{v\in V_h}{\sup}\frac{b(v,\partial_t\, \lambda(t_k))}{\|v\|_{\X}}\right)^2\ .
\]
Hence, if $1-(1+2\xi)\,\Delta t \geq \frac12$ then
\begin{equation}\label{cuatrounoseis}
\langle R\sigma_h^n,\sigma_h^n\rangle_{\Y}
+2\alpha\Delta t\sum_{k=1}^n\|\sigma_h^k\|^2_{\X}
\leq 2\langle R\sigma_h^0,\sigma_h^0\rangle_{\Y}
+ 2(1+2\xi)\,\Delta t\sum_{k=1}^{n-1}\langle R\sigma_h^k,\sigma_h^k\rangle_{\Y}
+ C\,\Delta t\,\sum_{k=1}^n\Theta_k^2,
\end{equation}
and, in particular
\[
\langle R\sigma_h^n,\sigma_h^n\rangle_{\Y}
\leq
2\langle R\sigma_h^0,\sigma_h^0\rangle_{\Y}
+
2(1+2\xi)\,\Delta t\sum_{k=1}^{n-1}\langle R\sigma_h^k,\sigma_h^k\rangle_{\Y}
+C\, \Delta t\,\sum_{k=1}^n\Theta_k^2.
\]
Consequently, by using the discrete Gronwall's inequality, we obtain
\[
\langle R\sigma_h^n,\sigma_h^n\rangle_{\Y}
\leq
C\,\left(
\langle R\sigma_h^0,\sigma_h^0\rangle_{\Y}
+\Delta t\,\sum_{k=1}^n\Theta_k^2
\right),
\]
for $n=1,\ldots,N$. Thus,
\begin{align*}
\Delta t\sum_{k=1}^{n-1}\langle R\sigma_h^k,\sigma_h^k\rangle_{\Y}
&\leq
C\,\Delta t\sum_{k=1}^{n-1}\left(
\langle R\sigma_h^0,\sigma_h^0\rangle_{\Y}
+\Delta t\,\sum_{j=1}^k\Theta_j^2
\right)
\leq C\,
\left(
\langle R\sigma_h^0,\sigma_h^0\rangle_{\Y}
+\Delta t\sum_{j=1}^{n-1}\Theta_j^2
\right),
\end{align*}
and finally, by substituting this inequality into \eqref{cuatrounoseis}, it follows \eqref{result_lemma_sigma}.
\end{proof}

\begin{theorem}\label{erroru}
Under the assumptions of Lemma~\ref{lemmasigma}, if $\{\beta_h\}_{h>0}$ is bounded uniformly in $h$ and $u\in \H^1(0,T;\X)\cap\H^2(0,T;\Y)$ then there exists a constant $C>0$ independent of $h$ and $\Delta t$, such that
\begin{align*}
\max_{1 \leq n \leq N}&\langle R(u(t_n)-u_h^n),u(t_n)-u_h^n\rangle_{\Y}
+\Delta t\sum_{n=1}^N\|u(t_n)-u_h^n\|^2_{\X}\\
&\leq C \left\{
\|u_0 -u_{0,h} \|_{\Y}^2 
+\max_{0 \leq n \leq N}\left( \inf_{z\in \X_h}{\|u(t_n)-z\|_{\X}}\right)^2
\right. \\ 
&\quad\quad\left.
+\int_0^T  \left(\inf_{z\in\X_h} \|\partial_{t} u(t)-z\|_{\X}\right)^2 dt 
+ (\Delta t)^2 \| \partial_{tt} u\|^2_{\L^2(0,T;\Y)} 
+ \Delta t \sum_{n=1}^N \left(\inf_{\mu\in\M_h}\|\partial_{t}\lambda(t_n)-\mu\|_{\M}\right)^2
\right\}.
\end{align*}
\end{theorem}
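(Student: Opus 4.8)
The plan is to insert the error split \eqref{splitE}--\eqref{rhoysigma}, $e_h^n=\rho_h^n+\sigma_h^n$ with $\rho_h^n=u(t_n)-\mathcal{P}_hu(t_n)$ and $\sigma_h^n=\mathcal{P}_hu(t_n)-u_h^n$, to estimate the projection part $\rho_h^n$ by the best-approximation bound \eqref{infimoPh}, and the discrete part $\sigma_h^n$ by Lemma~\ref{lemmasigma}. First I would record that $\sigma_h^n\in V_h$: by construction of $\mathcal{P}_h$ one has $b(\mathcal{P}_hw,\mu)=b(w,\mu)$ for all $w\in\X$ and $\mu\in\M_h$, so the second equations of Problems~\ref{PC1} and \ref{PD1} give $b(\sigma_h^n,\mu)=\langle g(t_n),\mu\rangle_\M-\langle g(t_n),\mu\rangle_\M=0$ for all $\mu\in\M_h$. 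Since $R$ is self-adjoint and monotone, the Cauchy--Schwarz inequality $|\langle Rv,w\rangle_\Y|^2\le\langle Rv,v\rangle_\Y\langle Rw,w\rangle_\Y$ holds (as already used in the proof of Lemma~\ref{lemmasigma}), hence $\langle R(a+b),a+b\rangle_\Y\le 2\langle Ra,a\rangle_\Y+2\langle Rb,b\rangle_\Y$; together with the triangle inequality in $\X$ this reduces the theorem to bounding $\max_{1\le n\le N}\langle R\rho_h^n,\rho_h^n\rangle_\Y$, $\Delta t\sum_{n=1}^N\|\rho_h^n\|_\X^2$, $\max_{1\le n\le N}\langle R\sigma_h^n,\sigma_h^n\rangle_\Y$ and $\Delta t\sum_{n=1}^N\|\sigma_h^n\|_\X^2$.

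For the $\rho$-terms, continuity of $R$ and of the imbedding $\X\subseteq\Y$ give $\langle R\rho_h^n,\rho_h^n\rangle_\Y\le C\|\rho_h^n\|_\X^2$, while \eqref{infimoPh} --- whose constant is $h$-independent because $1/\beta_h$ is uniformly bounded --- yields $\|\rho_h^n\|_\X\le C\inf_{z\in\X_h}\|u(t_n)-z\|_\X$; summing, $\Delta t\sum_{n=1}^N\|\rho_h^n\|_\X^2\le T\max_{0\le n\le N}\|\rho_h^n\|_\X^2\le C\max_{0\le n\le N}(\inf_{z\in\X_h}\|u(t_n)-z\|_\X)^2$. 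For the $\sigma$-terms I would invoke Lemma~\ref{lemmasigma}: as the right-hand side of \eqref{result_lemma_sigma} does not depend on $n$, taking the maximum over $n$ and using the instance $n=N$ for the sum gives $\max_{1\le n\le N}\langle R\sigma_h^n,\sigma_h^n\rangle_\Y+\Delta t\sum_{n=1}^N\|\sigma_h^n\|_\X^2\le C\big(\langle R\sigma_h^0,\sigma_h^0\rangle_\Y+\Delta t\sum_{k=1}^N\Theta_k^2\big)$ with $\Theta_k^2$ as in the proof of Lemma~\ref{lemmasigma}.

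It then remains to estimate $\langle R\sigma_h^0,\sigma_h^0\rangle_\Y$ and the four pieces of $\Delta t\sum_k\Theta_k^2$. Using the integral form of the Taylor remainder, $\tau^k=-\frac{1}{\Delta t}\int_{t_{k-1}}^{t_k}(s-t_{k-1})\,\partial_{tt}u(s)\ds$, Cauchy--Schwarz in $s$ gives $\|\tau^k\|_\Y^2\le C\Delta t\int_{t_{k-1}}^{t_k}\|\partial_{tt}u(s)\|_\Y^2\ds$, hence $\Delta t\sum_k\|\tau^k\|_\Y^2\le C(\Delta t)^2\|\partial_{tt}u\|_{\L^2(0,T;\Y)}^2$. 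Since $\mathcal{P}_h$ is linear, $\dif\rho_h^k=\frac{1}{\Delta t}\int_{t_{k-1}}^{t_k}(I-\mathcal{P}_h)\partial_t u(s)\ds$, so \eqref{infimoPh}, the imbedding and Cauchy--Schwarz give $\Delta t\sum_k\|\dif\rho_h^k\|_\Y^2\le C\int_0^T(\inf_{z\in\X_h}\|\partial_t u(s)-z\|_\X)^2\ds$; the piece $\Delta t\sum_k\|\rho_h^k\|_\X^2$ is treated as in the previous paragraph. For the last piece, $v\in V_h$ forces $b(v,\mu)=0$ for every $\mu\in\M_h$, so $b(v,\partial_t\lambda(t_k))=b(v,\partial_t\lambda(t_k)-\mu)\le\|b\|\,\|v\|_\X\,\|\partial_t\lambda(t_k)-\mu\|_\M$ for every $\mu\in\M_h$, whence $\big(\sup_{v\in V_h}b(v,\partial_t\lambda(t_k))/\|v\|_\X\big)^2\le\|b\|^2(\inf_{\mu\in\M_h}\|\partial_t\lambda(t_k)-\mu\|_\M)^2$. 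Finally I would split $\sigma_h^0=(\mathcal{P}_hu(0)-u(0))+(u(0)-u_{0,h})$ and use the third equation of Problem~\ref{PC1} (so that $R(u(0)-u_0)=0$ in $\Y'$) together with the self-adjointness of $R$ to replace $u(0)$ by $u_0$ in the last summand, arriving, via \eqref{infimoPh} and the imbedding, at $\langle R\sigma_h^0,\sigma_h^0\rangle_\Y\le C\big(\max_{0\le n\le N}(\inf_{z\in\X_h}\|u(t_n)-z\|_\X)^2+\|u_0-u_{0,h}\|_\Y^2\big)$. Collecting all these bounds gives the asserted estimate.

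I expect the main difficulty to be two-fold. Technically, getting the sharp $(\Delta t)^2$ in the consistency term and the sharp interpolation order in $\dif\rho_h^k$ forces the integral (not pointwise) form of the remainders together with Cauchy--Schwarz in time, matched against the extra factor $\Delta t$ already carried in front of the sums in Lemma~\ref{lemmasigma}. Conceptually, the split $e_h^n=\rho_h^n+\sigma_h^n$ unavoidably produces $\langle R\cdot,\cdot\rangle_\Y$-quantities of elements ($\rho_h^n$ and $\sigma_h^0$) that do not lie in $V$, so the argument implicitly uses that $R$ is self-adjoint and monotone on all of $\Y$ --- as happens in the Stokes and eddy-current applications --- rather than only on $V$ as in H2; pinning this down (or reorganising the split to avoid it) is the one genuinely delicate point.
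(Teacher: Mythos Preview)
Your proposal is correct and follows essentially the same route as the paper: split $e_h^n=\rho_h^n+\sigma_h^n$, control $\rho_h^n$ by \eqref{infimoPh}, invoke Lemma~\ref{lemmasigma} for $\sigma_h^n$, estimate $\tau^k$ by the integral Taylor remainder, $\dif\rho_h^k$ by linearity of $\mathcal{P}_h$ and \eqref{infimoPh}, the $b$-term by subtracting any $\mu\in\M_h$, and $\sigma_h^0$ via $\sigma_h^0=e_h^0-\rho_h^0$ together with the initial condition $Ru(0)=Ru_0$. Your closing remark is on point: the paper's proof, like yours, tacitly applies the Cauchy--Schwarz inequality for $R$ to vectors such as $\rho_h^0$ and $u_0-u_{0,h}$ that need not lie in $V$, which strictly speaking requires $R$ to be self-adjoint and monotone on $\Y$ rather than only on $V$ as in H2 --- an assumption satisfied in all the applications but not explicitly stated.
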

\begin{proof}
A Taylor expansion shows that 
\[
\sum_{k=1}^N \| \tau^k \|^2_{\Y}
=\sum_{k=1}^N \left\|\frac{1}{\Delta t} \int_{t_{k-1}}^{t_{k}} (t_{k-1}-t)\partial_{tt}u(t)\dt\right\|_{Y}^2
\leq \Delta t \int_0^T\|\partial_{tt}u(t)\|_{\Y}^2 \dt.
\]

It is easy to show that
\[
\underset{v\in V_h}{\sup}\frac{b(v,\partial_t\, \lambda(t_k))}{\|v\|_{\X}}\leq C \inf_{\mu\in\M_h} \| \partial_{t} \lambda(t_k)-\mu \|_{\M}.
\]

On the other hand, from \eqref{infimoPh}, \eqref{rhoysigma} and recalling that $\{\beta_h\}_{h>0}$ is bounded uniformly in $h$, we get
\[
\|\rho_h(t)\|_{\X}\leq C \inf_{z\in \X_h}\|u(t)-z\|_{\X}. 
\]
Furthermore, the regularity assumption about $u$ implies $\partial_{t}\mathcal{P}_h u(t)=\mathcal{P}_h(\partial_{t}u(t))$, and consequently
\[
\|\partial_{t}\rho_h(t)\|_{\X}\leq C \inf_{z\in \X_h}\|\partial_{t} u(t)-z\|_{\X}.
\]
Hence, by recalling \eqref{rhoysigma}, it is easy to check that
\begin{align*}
\Delta t\sum_{k=1}^N \|\dif \rho_h^k \|^2_{\Y} 
=\sum_{k=1}^N\left\|\frac{1}{\Delta t}\int_{t_{k-1}}^{t_k}\partial_t\rho_h(t)\dt\right\|_Y^2
\leq\sum_{k=1}^N \int_{t_{k-1}}^{t_k}\|\partial_t\rho_h(t)\|_Y^2\dt
\leq C \int_0^T\|\partial_t\rho_h(t)\|_{\X}^2 dt
\end{align*}

Finally, by writing $\sigma_h^0=e_h^0-\rho_h^0$ and using the fact that $R$ is self-adjoint and  monotone from
from third equation the Problem \ref{PC1}, it follows that
\[
\langle R\sigma_h^0,\sigma_h^0\rangle_{\Y}\leq 2\langle R(u_0-u_{0,h}),u_0-u_{0,h}\rangle_{\Y}+2\langle R\rho_h^0,\rho_h^0\rangle_{\Y}.
\]

Combining these inequalities and Lemma~\ref{lemmasigma}, the result follows from the fact that $u(t_n)-u_h^n=\rho_h^n+\sigma_h^n$ (see \eqref{splitE}) and the triangle inequality. 
\end{proof}

\subsection{Error estimates for Lagrange multiplier $\lambda$}
The following theorem is the analogous result to Theorem~\ref{erroru}, but
for the case of the Lagrange multiplier $\lambda$. 
\begin{theorem}\label{error-lambda}
Under the assumptions of Theorem~\ref{erroru}, if $f\in\H^1(0,T;\X')$ then there exist a constant $C>0$ independent of $h$ and $\Delta t$, such that 
\begin{multline*}
\Delta t\sum_{n=1}^N \|\lambda(t_n)-\lambda_h^n\|_{\M}^2
\leq C
\left((\Delta t)^2 \|\partial_t f\|^2_{\L^2(0,T;\X')}
+(\Delta t)^2 \|\partial_t u\|^2_{\L^2(0,T;\X)}
\right.
\\
\left.
+ \Delta t\sum_{n=1}^N\left(\underset{\mu\in\M_h}{\inf}\|\lambda(t_n)-\mu\|_{\M}\right)^2
+ \Delta t\sum_{n=1}^N \|u(t_n)-u_h^n\|^2_{\X}\right)
\end{multline*}
\end{theorem}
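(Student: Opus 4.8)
The plan is to use the discrete inf-sup condition~H8 for $b$ to estimate $\|\lambda(t_n)-\lambda_h^n\|_\M$ in terms of the bilinear form $b$, after first producing a suitable identity for $b(v,\lambda(t_n)-\lambda_h^n)$, $v\in\X_h$. To obtain that identity I would not evaluate the continuous equation at $t_n$ but integrate it over $[t_{k-1},t_k]$: dividing by $\Delta t$, the first equation of Problem~\ref{PC1} gives, for all $v\in\X$,
\[
\langle R\dif u(t_k),v\rangle_{\Y}+b(v,\dif\lambda(t_k))+\langle A\bar u^k,v\rangle_{\X}=\langle \bar f^k,v\rangle_{\X},
\]
where $\dif u(t_k):=(u(t_k)-u(t_{k-1}))/\Delta t$, $\dif\lambda(t_k):=(\lambda(t_k)-\lambda(t_{k-1}))/\Delta t$ and $\bar u^k:=\frac1{\Delta t}\int_{t_{k-1}}^{t_k}u$, $\bar f^k:=\frac1{\Delta t}\int_{t_{k-1}}^{t_k}f$. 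Subtracting the first equation of Problem~\ref{PD1} and testing with $v\in\X_h$ yields, writing $e_u^k:=u(t_k)-u_h^k$,
\[
b(v,\dif\lambda(t_k)-\dif\lambda_h^k)=-\langle R\dif e_u^k,v\rangle_{\Y}-\langle A(\bar u^k-u_h^k),v\rangle_{\X}+\langle \bar f^k-f(t_k),v\rangle_{\X}.
\]
Multiplying by $\Delta t$ and summing for $k=1,\dots,n$, the quotients of $\lambda-\lambda_h$ telescope exactly to $e_\lambda^n:=\lambda(t_n)-\lambda_h^n$ (here $\lambda(0)=0$ by Theorem~\ref{princonti} and $\lambda_h^0=0$), while $\Delta t\sum_{k=1}^n\dif e_u^k=e_u^n-e_u^0$, so that
\[
b(v,e_\lambda^n)=-\langle R(e_u^n-e_u^0),v\rangle_{\Y}-\Delta t\sum_{k=1}^n\langle A(\bar u^k-u_h^k),v\rangle_{\X}+\Delta t\sum_{k=1}^n\langle \bar f^k-f(t_k),v\rangle_{\X},\qquad v\in\X_h.
\]

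Next I would estimate. Write $e_\lambda^n=(\lambda(t_n)-\chi_h^n)+(\chi_h^n-\lambda_h^n)$ with $\chi_h^n\in\M_h$ a best approximation of $\lambda(t_n)$, so that the first term has norm $\inf_{\mu\in\M_h}\|\lambda(t_n)-\mu\|_\M$; since $\chi_h^n-\lambda_h^n\in\M_h$, H8 gives $\beta_h\|\chi_h^n-\lambda_h^n\|_\M\le\sup_{v\in\X_h}b(v,\chi_h^n-\lambda_h^n)/\|v\|_\X$, and $b(v,\chi_h^n-\lambda_h^n)=b(v,\chi_h^n-\lambda(t_n))+b(v,e_\lambda^n)$. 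Inserting the identity above, using boundedness of $R$, $A$ and $b$, the embedding $\X\hookrightarrow\Y$, the decomposition $\bar u^k-u_h^k=(\bar u^k-u(t_k))+e_u^k$ and the integral remainders $\|\bar u^k-u(t_k)\|_\X\le\int_{t_{k-1}}^{t_k}\|\partial_t u\|_\X$, $\|\bar f^k-f(t_k)\|_{\X'}\le\int_{t_{k-1}}^{t_k}\|\partial_t f\|_{\X'}$ (legitimate since $u\in\H^1(0,T;\X)$, $f\in\H^1(0,T;\X')$), together with the uniform lower bound on $\beta_h$, I would arrive at
\[
\|e_\lambda^n\|_\M\le C\Big(\inf_{\mu\in\M_h}\|\lambda(t_n)-\mu\|_\M+\|e_u^n\|_\X+\|e_u^0\|_\Y+\Delta t\!\int_0^T\!\|\partial_t u\|_\X+\Delta t\!\int_0^T\!\|\partial_t f\|_{\X'}+\Delta t\sum_{k=1}^n\|e_u^k\|_\X\Big).
\]
Squaring, multiplying by $\Delta t$, summing over $n=1,\dots,N$ and using the discrete Cauchy--Schwarz inequality $\big(\Delta t\sum_{k=1}^n\|e_u^k\|_\X\big)^2\le T\,\Delta t\sum_{k=1}^N\|e_u^k\|_\X^2$ turns each term into one of the terms on the right-hand side of the statement (the two $\Delta t\int_0^T$ terms producing $(\Delta t)^2\|\partial_t u\|^2_{\L^2(0,T;\X)}$ and $(\Delta t)^2\|\partial_t f\|^2_{\L^2(0,T;\X')}$); the residual initial contribution $\|e_u^0\|_\Y=\|u_0-u_{0,h}\|_\Y$ is of the same type as the $u$-error already present and is controlled through the approximation properties of $u_{0,h}$ and Theorem~\ref{erroru}.

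The main obstacle is the term $\langle R\dif e_u^k,v\rangle_{\Y}$ in the per-step error equation: it involves the difference quotient of the $u$-error, which carries a factor $\Delta t^{-1}$ and admits no $O(1)$ bound, so it cannot be estimated directly. The remedy is to sum the error identity in time \emph{before} estimating, using that $R$ is self-adjoint and monotone (H2), so that $\Delta t\sum_{k=1}^n\langle R\dif e_u^k,v\rangle_{\Y}=\langle R(e_u^n-e_u^0),v\rangle_{\Y}$ involves only the error at the nodes and can be bounded via the embedding $\X\hookrightarrow\Y$. It is precisely to make this summation clean that the continuous equation is used in its time-integrated form: then the $\dif\lambda$ quotients telescope exactly to $\lambda(t_n)-\lambda(0)=\lambda(t_n)$, introducing no quadrature error in $\lambda$ — which matters because only $\lambda\in\mathcal C^1(0,T;\M)$ is available. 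The remaining steps — identifying the averaging consistency errors $\bar u^k-u(t_k)$, $\bar f^k-f(t_k)$ with $O(\Delta t)$ contributions and absorbing the $A$-sum into $\Delta t\sum_k\|e_u^k\|^2_\X$ — are routine.
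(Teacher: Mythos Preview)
Your proposal is correct and follows essentially the same strategy as the paper: both time-integrate the continuous equation so that the $R$- and $b$-terms telescope (you integrate over each $[t_{k-1},t_k]$ and sum, the paper integrates directly over $[0,t_n]$), then invoke the discrete inf-sup condition H8 together with the quadrature estimates for $u$ and $f$ to bound $\|\lambda(t_n)-\lambda_h^n\|_\M$. The only cosmetic difference is that the paper introduces an auxiliary $\tilde\lambda_h^n\in\M_h$ satisfying $b(v,\tilde\lambda_h^n)=\langle L(t_n),v\rangle_\X$ for $v\in\X_h$ as the intermediate element, whereas you use a generic best approximation $\chi_h^n$; both routes yield the same bound, and your remark about the leftover $\|u_0-u_{0,h}\|_\Y$ term matches exactly what appears in the paper's estimate \eqref{estisupL}.
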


\begin{proof}
Let $n\in\{1,\dots,N\}$. By integrating  first equation the Problem \ref{PC1} on $[0,t_n]$, we have
\begin{align*}
\int_0^{t_n}\frac{d}{dt}\left[\langle Ru(t),v\rangle_{\Y}
+ b(v,\lambda(t))\right]\dt+\int_0^{t_n}a(u(t),v)\dt
=\int_0^{t_n}\langle f(t),v\rangle_{\X}\dt 
&& \forall v\in\X,
\end{align*}
then,  using the initial condition for $u$ and  recalling that $\lambda(0)=0$ (see Theorem \ref{princonti}),  it follows that
$\lambda(t_n)$ satisfies
\begin{equation}\label{probltn}
b(v,\lambda(t_n))=\langle L(t_n),v\rangle_{\X}\qquad \forall v\in\X, 
\end{equation}
where
\[
\langle L(t_n),v\rangle_{\X}
:=\left\langle \int_0^{t_n}f(t)\dt,v\right\rangle_{\X}
-\langle R(u(t_n)-u_0),v\rangle_{\Y}
-\left\langle A\int_0^{t_n}u(t)\dt,v\right\rangle_{\X}.
\]

On the other hand, by summing  \eqref{PD1} from $1$ to $n$ and using the Problem \ref{PD1}, we deduce that 
$\lambda_h^n$ verifies
\begin{equation}\label{problhn}
b(v,\lambda_h^n)=\langle L_h^n,v\rangle_{\X}\qquad \forall v\in\X_h,
\end{equation}
with
\[
\langle L_h^n,v\rangle_{\X}
:=\left\langle \Delta t \sum_{k=1}^n f(t_k),v\right\rangle_{\X}
-\langle R(u_h^n-u_{0,h}),v\rangle_{\Y}
-\left\langle A\left(\Delta t \sum_{k=1}^n u_h^k\right),v\right\rangle_{\X}.
\]

Now, we need to consider the problem
\begin{problem}\label{probtlhn}
 Find $\widetilde{\lambda}_h^n\in\M_h$ such that 
\begin{equation*}
b(v,\widetilde{\lambda}_h^n)=\langle L(t_n),v\rangle_{\X}\qquad \forall v\in\X.
\end{equation*}
\end{problem}
We can check that $L(t_n)\in {V_h^\perp:=\{v\in\X\,:\,b(v,\mu)=0\ \forall \mu\in\M_h \}}$, i.e.,
\[
\langle L(t_n),v\rangle_{\X}=0\qquad\forall v\in V_h,
\]
thus, the Problem~\ref{probtlhn} is a well-posed problem, since $b$ satisfies the discrete \textit{inf-sup} condition.

Next, we notice that by using \eqref{probltn} and \eqref{probtlhn}, the following orthogonality relationship is obtained
\[
b(v,\lambda(t_n)-\widetilde{\lambda}_h^n)=0\qquad \forall v\in\X_h,
\]
consequently, the discrete \textit{inf-sup} condition implies
\[
\|\widetilde{\lambda}_h^n-\mu\|_{\M}
\leq\frac{1}{\beta_h}\ \underset{v\in\X_h}{\sup}\frac{b(v,\widetilde{\lambda}_h^n-\mu)}{\|v\|_{\X}}
=\frac{1}{\beta_h}\ \underset{v\in\X_h}{\sup}\frac{b(v,\lambda(t_n)-\mu)}{\|v\|_{\X}}
\leq\frac{\|b\|}{\beta_h}\|\lambda(t_n)-\mu\|_{\M},
\]
and therefore
\[
\|\lambda(t_n)-\widetilde{\lambda}_h^n\|_{\M}
\leq \left(1+\frac{\|b\|}{\beta_h}\right)\underset{\mu\in\M_h}{\inf}\|\lambda(t_n)-\mu\|_{\M}.
\]

On the other hand, by using \eqref{problhn} and \eqref{probtlhn}, it follows that
\[
\|\widetilde{\lambda}_h^n-\lambda_h^n\|_{\M}
\leq\frac{1}{\beta_h}\ \underset{v\in\X_h}{\sup}\frac{b(v,\widetilde{\lambda}_h^n-\lambda_h^n)}{\|v\|_{\X}}
=\frac{1}{\beta_h}\ \underset{v\in\X_h}{\sup}\frac{\langle L(t_n)-L_h^n,v\rangle_{\X}}{\|v\|_{\X}},
\]
hence
\[
\|\lambda(t_n)-\lambda_h^n\|_{\M}
\leq \left(1+\frac{\|b\|}{\beta_h}\right)\underset{\mu\in\M_h}{\inf}\|\lambda(t_n)-\mu\|_{\M}+\frac{1}{\beta_h}\ \underset{v\in\X_h}{\sup}\frac{\langle L(t_n)-L_h^n,v\rangle_{\X}}{\|v\|_{\X}},
\]
and finally, recalling that $\{\beta_h\}_{h>0}$ is bounded uniformly in $h$, we can conclude 
\begin{equation*}
\Delta t\sum_{n=1}^N \|\lambda(t_n)-\lambda_h^n\|_{\M}^2
\leq C\,\left( \Delta t\sum_{n=1}^N\left(\underset{\mu\in\M_h}{\inf}\|\lambda(t_n)-\mu\|_{\M}\right)^2
+\Delta t\sum_{n=1}^N\left(\underset{v\in\X_h}{\sup}\frac{\langle L(t_n)-L_h^n,v\rangle_{\X}}{\|v\|_{\X}}\right)^2\right).
\end{equation*}

It remains to estimate the last term in the previous inequality. In fact, using the definitions of 
$L(t_n)$ and $ L_h^n$, for all $v\in X_h$,  we deduce
\begin{multline*}
\left(\underset{v\in\X_h}{\sup}\frac{\langle L(t_n)-L_h^n,v\rangle_{\X}}{\|v\|_{\X}}\right)^2\\
\leq C\,\left(\|u_0-u_{0,h}\|^2_{\Y}+\left\|\int_0^{t_n}f(t)\dt-\Delta t \sum_{k=1}^n f(t_k)\right\|^2_{\X'}
+\left\|\int_0^{t_n}u(t)\dt-\Delta t \sum_{k=1}^n u_h^k\right\|^2_{\X}
+\|e_h^n\|^2_{\X}\right),
\end{multline*}
therefore,
\begin{multline}\label{estisupL}
\Delta t\sum_{n=1}^N\left(\underset{v\in\X_h}{\sup}\frac{\langle L(t_n)-L_h^n,v\rangle_{\X}}{\|v\|_{\X}}\right)^2
\leq C\,\Delta t\left(
N\|u_0-u_{0,h}\|^2_{\Y}
+\sum_{n=1}^N\left\|\int_0^{t_n}f(t)\dt-\Delta t \sum_{k=1}^n f(t_k)\right\|^2_{\X'}
\right.
\\
\left.
+\sum_{n=1}^N\left\|\int_0^{t_n}u(t)\dt-\Delta t \sum_{k=1}^n u_h^k\right\|^2_{\X}
+\sum_{n=1}^N\|e_h^n\|^2_{\X}
\right).
\end{multline}

Next, we will show that
\begin{equation}\label{estiintu}
\Delta t\sum_{n=1}^N
\left\|\int_0^{t_n}u(t)\dt-\Delta t \sum_{k=1}^n u_h^k\right\|^2_{\X}
\leq 
2T^2 (\Delta t)^2 \|\partial_t u\|^2_{\L^2(0,T;\X)}
+2 T^2 \Delta t \sum_{k=1}^N \| e_h^k \|^2_{\X}
\end{equation}
and
\begin{equation}\label{estintf}
\Delta t\sum_{n=1}^N \left\|\int_0^{t_n}f(t)\dt-\Delta t \sum_{k=1}^n f(t_k)\right\|^2_{\X'}
\leq T^2 (\Delta t)^2 \|\partial_t f\|^2_{\L^2(0,T;\X')} 
\end{equation} 
In fact, to obtain \eqref{estiintu}, first we notice that
\begin{equation*}
\begin{split}
\left\|\int_0^{t_n}u(t)\dt-\Delta t \sum_{k=1}^n u_h^k\right\|^2_{\X}
&= \left\|\left(\int_0^{t_n}u(t)\dt-\Delta t \sum_{k=1}^n u(t_k)\right)+\Delta t \sum_{k=1}^n e_h^k\right\|^2_{\X}\\
&\leq { 2\left\|\int_0^{t_n}u(t)\dt-\Delta t \sum_{k=1}^n u(t_k)\right\|^2_{\X}
+2T\Delta t \sum_{k=1}^n \|e_h^k\|^2_{\X}}
\end{split}
\end{equation*}
 and
\begin{align*}
\left\|\int_0^{t_n}u(t)\dt-\Delta t \sum_{k=1}^n u(t_k)\right\|^2_{\X}
&=\left\|\sum_{k=1}^n\int_{t_{k-1}}^{t_k}(u(t)-u(t_k))\dt\right\|^2_{\X}
\leq \left(\sum_{k=1}^n\int_{t_{k-1}}^{t_k}\|u(t)-u(t_k)\|_{\X}\dt\right)^2
\\
&
\leq 
n\sum_{k=1}^n\left(\int_{t_{k-1}}^{t_k}\|u(t)-u(t_k)\|_{\X}\dt\right)^2
\leq n \Delta t\sum_{k=1}^n\int_{t_{k-1}}^{t_k}\|u(t)-u(t_k)\|^2_{\X}\dt
\\
&
= T \sum_{k=1}^n\int_{t_{k-1}}^{t_k}\left\|-\int_t^{t_k} \partial_t u(s)\ds\right\|^2_{\X}\dt
\leq T (\Delta t)^2\int_0^T\|\partial_t u(s)\|^2_{\X}ds,
\end{align*}
which implies \eqref{estiintu}. 
Next, we can apply similar computations to deduce \eqref{estintf}. 
Finally, combining \eqref{estisupL}--\eqref{estintf} and the fact that $e_h^n=u(t_n)-u_h^n$, we conclude the proof.
\end{proof}
\section{Applications}\label{aplicaciones}

\subsection{The time-dependent Stokes problem}
{The time-dependet Stokes is a fundamental model of viscous flow.
This problem arises from neglecting the nonlinear terms in \textit{Navier-Stokes} (see \cite{BR}). Stokes flows are important in lubrication theory, in porous media flow, biology applications (see \cite{panton})}. Now, we proceed to study 
the numerical approximation the time-dependent Stokes problem.
In this way, we consider $\Omega\subset \R^d$ be an open, bounded and connected, where $d$ either $2$ or $3$ the space dimension. The boundary of $\Omega$ is denoted by $\Gamma:=\partial\Omega$ and assumed to be Lipschitz continuous. 
In \cite{AGL} we have studied the well posedness of the following  variational formulation for time-dependent Stokes problem given by  

\begin{problem} \label{variationalstokes}
 Find $\un\in\ldoshounod$ and $P\in\H^1(0,T;\lodoso)$
such that
\begin{align*}
\frac{d}{dt}\left( \int_{\Omega} \un(t)\cdot \vn-\int_\Omega P(t)\dive \vn \right)+ \nu \int_{\Omega }\nabla\un(t) : \nabla\vn
=\int_{\Omega}\boldsymbol{f}(t)\cdot\vn&\qquad \forall\vn\in\hounoO^d,\\
\int_\Omega q\dive\un = 0&\qquad\forall q\in\lodoso,\\
\un(\cdot,0)=\un_0(\cdot) &\qquad \mathrm{in}\ \Omega,
\end{align*}
\end{problem}
where tensor product $\boldsymbol{z}:\wn$ is given by $\displaystyle
\boldsymbol{z}:\wn:=\sum_{i=1}^d\sum_{j=1}^d \boldsymbol{z}_{ij}\wn_{ij};\,\,\text{for any}\,\,\boldsymbol{z},\wn\in \L^2(\Omega)^{d\times d} $.

In order to obtain the fully-discrete approximation of Problem~\ref{variationalstokes}, so 
we want to use finite element subspaces to define $X_h$ and $M_h$, the corresponding families of finite 
dimensional subspaces of $X:=\houno^d$ and $M:=\lodoso$, respectively. 
To this aim, in what follows we assume that $\Omega$ is a Lipschitz polygon if $d=2$ or a Lipschitz polyhedra if $d=3$. 
Likewise, let $\set{\mathcal{T}_h}_h$ be a regular family of triangles meshes of $\Omega$ if $d=2$ or of  
tetrahedral meshes of $\Omega$ for the case $d=3$. 

The spaces $X_h$ and $M_h$ should be respectively finite element subspaces of $\houno^d$ and $\lodoso$ satisfying the discrete
inf-sup condition required for the assumption {H7} (see Section~\ref{discreto}), \textit{i.e.},
\begin{equation}\label{d-inf-sup-S}
\sup_{\vn\in\X_h}\dfrac{-\int_\Omega {q\dive\vn}}{\norm{\vn}_{\houno^d}}\geq \beta\norm{q}_{\ldoso}\qquad\forall q\in\M_h.
\end{equation}

For the sake of simplicity, 
we only consider the pair of finite element subspaces so-called 
\textit{the MINI finite element}, which was introduced by Arnold, Brezzi and Fortin \cite{mini}.
In the MINI element the discrete space for the velocity and pressure are respectively defined by
\begin{equation}\label{Xh-mini}
\begin{split}
\X_h
:=\set{\vn\in\left[{C}(\Omega)\cap\houno\right]^d:\: \vn\vert_{K}\in\left[\poli_1\oplus\mathbb{B}_{d+1}\right]^d\quad\forall K\in\mathcal{T}_h}
\end{split}
\end{equation}
and
\begin{equation}\label{Mh-mini}
\begin{split}
\M_h:=\set{\vn\in{C}(\Omega)\cap\lodoso:\: \vn\vert_{K}\in\poli_1\quad\forall K\in\mathcal{T}_h},
\end{split}
\end{equation}
where $\mathbb{B}_{d+1}$ are the bubbles functions, which are defined by
\begin{equation}\nonumber
\mathbb{B}_{d+1}
:=
\set{\gamma\varphi(x_1,\ldots,x_d):\: \gamma\in\R,\quad \varphi(x_1,\ldots,x_d):=x_1\ldots x_d(1-x_1-\ldots-x_d)}
.
\end{equation}

The bubble part of the of the velocity is needed to stabilized the formulation, 
namely to satisfy the discrete inf-sup condition, which is shown in the following lemma.

\begin{lemma}\label{mini-stable}
Let $\X_h$ and $\M_h$ given by \eqref{Xh-mini} and \eqref{Mh-mini} respectively. 
Then, there exists $\beta>0$ independent of $h$ satisfying the discrete inf-sup condition \eqref{d-inf-sup-S}.
\end{lemma}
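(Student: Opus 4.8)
The strategy is to verify the discrete inf-sup condition \eqref{d-inf-sup-S} through the classical Fortin criterion. It suffices to construct a linear \emph{Fortin operator} $\Pi_h:\hounoO^d\to\xho$ satisfying $\int_\Omega q\,\dive(\vn-\Pi_h\vn)=0$ for every $q\in\moh$ and $\vn\in\hounoO^d$, together with a uniform bound $\norm{\Pi_h\vn}_{\houno^d}\le C\,\norm{\vn}_{\houno^d}$ with $C$ independent of $h$. Granting such an operator, one invokes the continuous inf-sup condition for the pair $\hounoO^d\times\lodoso$ (equivalently, the surjectivity of $\dive:\hounoO^d\to\lodoso$, classical for Lipschitz domains and already used in \cite{AGL}); testing it with $\Pi_h\vn$ in place of $\vn$ and using the two properties of $\Pi_h$ transfers the condition to $\xho\times\moh$ with $\beta=\beta_0/C$, where $\beta_0$ is the continuous inf-sup constant.

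To build $\Pi_h$ I would proceed in two stages. Let $\Pi_h^1:\hounoO^d\to[\poli_1\cap C(\Omega)\cap\H_0^1(\Omega)]^d$ be a component-wise Scott--Zhang quasi-interpolation operator, which preserves the homogeneous boundary condition, is $\H^1$-stable, $\norm{\Pi_h^1\vn}_{\houno^d}\le C\norm{\vn}_{\houno^d}$, and enjoys the local approximation estimate $\norm{\vn-\Pi_h^1\vn}_{\L^2(K)}\le C\,h_K\,\norm{\nabla\vn}_{\L^2(\omega_K)}$, where $\omega_K$ is the patch of elements meeting $K$. For the remainder $\wn:=\vn-\Pi_h^1\vn$, define $\Pi_h^2\wn\in[\mathbb{B}_{d+1}]^d$ by choosing, on each $K\in\mathcal{T}_h$, the bubble coefficient so that $\int_K\Pi_h^2\wn=\int_K\wn$; this is possible because the element bubble has nonzero integral over $K$. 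Finally set $\Pi_h\vn:=\Pi_h^1\vn+\Pi_h^2(\vn-\Pi_h^1\vn)$, which lies in $\xho$ and vanishes on $\partial\Omega$.

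The divergence-preservation property follows because the pressure space is continuous and piecewise affine: for $q\in\moh$, integrating by parts (the boundary term drops since $\Pi_h\vn=\vn=\cero$ on $\partial\Omega$) gives $\int_\Omega q\,\dive(\vn-\Pi_h\vn)=-\int_\Omega\nabla q\cdot(\vn-\Pi_h\vn)=-\sum_{K\in\mathcal{T}_h}\nabla q|_K\cdot\int_K(\vn-\Pi_h\vn)$, and by construction $\int_K(\vn-\Pi_h\vn)=\int_K(\wn-\Pi_h^2\wn)=\cero$ for each $K$. Hence $\int_\Omega q\,\dive\Pi_h\vn=\int_\Omega q\,\dive\vn$ for all $q\in\moh$.

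It remains to prove the uniform $\H^1$-stability of $\Pi_h$, and this is the main technical point. The first stage is stable by construction of the Scott--Zhang operator. For the bubble correction, a scaling argument passing to the reference simplex, together with the shape-regularity of $\{\mathcal{T}_h\}_h$ and the equivalence of norms on the (finite-dimensional) bubble space, yields $\norm{\Pi_h^2\wn}_{\H^1(K)}\le C\,h_K^{-1}\norm{\wn}_{\L^2(K)}$ on each element; combining this with the approximation estimate for $\Pi_h^1$ gives $\norm{\Pi_h^2(\vn-\Pi_h^1\vn)}_{\H^1(K)}\le C\,\norm{\nabla\vn}_{\L^2(\omega_K)}$, and summing over $K$ (the patches $\omega_K$ overlapping a bounded number of times by shape-regularity) produces $\norm{\Pi_h\vn}_{\houno^d}\le C\,\norm{\vn}_{\houno^d}$ with $C$ independent of $h$. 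Everything else — the continuous inf-sup, the Scott--Zhang stability and approximation bounds, and Fortin's lemma itself — is standard, so the crux is precisely this careful bookkeeping of the $h_K$-powers in the bubble estimate.
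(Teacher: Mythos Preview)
Your argument is correct: the Fortin-operator construction via a Scott--Zhang (or Cl\'ement) quasi-interpolant plus an element-wise bubble correction is exactly the standard proof of the discrete inf--sup condition for the MINI element, and your bookkeeping of the $h_K$-powers is right. The paper itself does not prove the lemma but simply cites \cite[Lemma~4.20]{guermond}, where essentially this same construction is carried out; you have supplied the details the paper defers to that reference.
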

\begin{proof}
See, for instance, \cite[Lemma~4.20]{guermond}.
\end{proof}
By using the discrete subspaces given by the MINI elements, we can consider the following fully-discrete 
approximation of Problem \ref{variationalstokes} given $\un_{0,h}\in\X_h$ and by denoting
\begin{equation}\nonumber
\un_h^0:=\un_{0,h},\qquad P^0_h:=0,
\end{equation}
\begin{problem}\label{discretestokes}
Find $\unhn\in\X_h$ and $\Phn\in\M_h$ for $n=1,\ldots,N$,  such that
\begin{align*}
\int_{\Omega} \left(\frac{\unhn-\unhnm}{\Delta t}\right)\cdot \vn 
 -\int_\Omega \left(\frac{\Phn-\Phnm}{\Delta t}\right)\dive \vn
 + \nu \int_{\Omega }\nabla\unhn : \nabla\vn
=\int_{\Omega}\boldsymbol{f}(t_n)\cdot\vn
&\qquad \forall\vn\in\X_h,
\\
\int_\Omega q\dive\unhn = 0&\qquad\forall q\in\M_h.
\end{align*}
\end{problem}
Since the MINI element satisfies the discrete inf-sup condition (see Lemma~\ref{mini-stable}), 
the property $\mathrm{H7}$ 
from Section~\ref{discreto} holds true. Hence, to deduce that the Problem~\ref{discretestokes}
has a unique solution, it only remains to prove $\mathrm{H6}$, \textit{i.e.}, the Garding-like inequality
\begin{equation}\label{garding-d-ST}
\nu\int_{\Omega}\abs{\nabla\vn}^2 
+ \gamma_h\int_{\Omega}\abs{\vn}^2 
\geq \alpha_h\norm{\vn}_{\hounoO^3}^2\qquad\forall\vn\in V_h,
\end{equation}
where $V_h$ is the discrete kernel
\begin{equation}\nonumber
V_h:=\set{\vn\in\X_h:\: \int_{\Omega}q\dive\vn=0\quad\forall q\in\M_h}.
\end{equation}
Moreover, \eqref{garding-d-ST} holds uniformly on $h$ (more exactly with $\gamma_h=0$ and $\alpha=\nu$),
since the norm in $\hounoO^3$ is precisely
\[
\norm{\vn}_{\hounoO^3}=\left(\int_{\Omega}\abs{\nabla\vn}^2\right)^{1/2}.
\]

Consequently, Problem \ref{discretestokes} has a unique solution $(\unhn)_{n=1}^N\subset\X_h$
and $(\Phn)_{n=1}^N\subset\M_h$.

Our next goal is to obtain \textit{C\'ea}-like error estimates for the fully-discrete approximation 
of the Stokes problem. 
We will first obtain error estimates for the approximation of the velocity. 
In fact, we have the following result for a direct application of Theorem~\ref{erroru}.
\begin{theorem}\label{cea-u-ST}
Suppose the assumptions of Theorem 2.1 studied in  \cite{AGL} and let $(\un,P)$ and  $(\unhn,\Phn)_{n=1}^N$
be the solutions of Problem \ref{variationalstokes} and Problem \ref{discretestokes},
respectively.
Furthermore, assume that 
\begin{equation*}\label{regu-uP}
\begin{split}
{\un\in \H^1(0,T;\hounoO^3)\cap\H^2(0,T;\ldoso^3),\qquad P\in{C}^1(0,T;\lodoso).}
\end{split}
\end{equation*}
Then, {for a small enough time step $\Delta t$,} 
there exists a constant $C>0$, independent of $h$ and $\Delta t$, such that
\begin{equation}\nonumber
\begin{split}
&\max_{1\leq n\leq N}\|\un(t_n) - \unhn\|_{\ldoso^3}^2 
+ \Delta t\sum_{n=1}^{N}\|\un(t_n) - \unhn\|_{\hounoO^3}^2\\
&\quad\leq C\left\{
\norm{\un_0-\un_{0,h}}_{\ldoso^3}^2
+\max_{1\leq n\leq N}
\inf_{\vn\in\xho}\Vert\un(t_n)-\vn\Vert_{\hounoO^3}^2
\phantom{\left(\inf_{\wn\in\M_h}\|\partial_{t}\vn(t_n)-\wn\|_{\ldosod^3}\right)^2}
\right.\\
&\qquad\left.\phantom{\sum_{k=1}^{N}}
+\Delta t
\sum_{n=1}^N\inf_{\vn\in\xho}\Vert\un(t_n)-\vn\Vert_{\hounoO^3}^2
+\int_0^T \left(
\inf_{\vn\in\xho}\Vert\partial_t\un(t)-\vn\Vert_{\hounoO^3}^2\right)\dt\right.
\\
&\qquad\left.\phantom{\sum_{k=1}^{N}}
+(\Delta t)^2\int_0^{T}\left\Vert\partial_{tt}\un(t)\right\Vert
_{\ldoso^3}^2 \dt
+ \Delta t \sum_{n=1}^N \left(\inf_{q\in\M_h}\|\partial_{t}P(t_n)-q\|_{\ldoso}\right)^2
\right\}.
\end{split}
\end{equation}
\end{theorem}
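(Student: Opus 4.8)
The plan is to recognize Problem~\ref{variationalstokes} and its fully discrete analogue Problem~\ref{discretestokes} as concrete instances of the abstract Problem~\ref{PC1} and Problem~\ref{PD1}, and then to invoke Theorem~\ref{erroru} verbatim. First I would fix the identifications
\[
\X:=\hounoO^3,\qquad \Y:=\ldoso^3,\qquad \M:=\lodoso ,
\]
so that the imbedding $\X\subseteq\Y$ is continuous and dense, take $R:\Y\to\Y'$ to be the Riesz isomorphism $\langle R\wn,\vn\rangle_{\Y}=\int_\Omega\wn\cdot\vn$, let $A:\X\to\X'$ be induced by $\langle A\wn,\vn\rangle_{\X}:=\nu\int_\Omega\nabla\wn:\nabla\vn$, set $b(\vn,q):=-\int_\Omega q\dive\vn$, and put $g\equiv\cero$ and $u_0:=\un_0$. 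A term-by-term comparison then shows that the first two equations of Problem~\ref{variationalstokes} (resp.\ Problem~\ref{discretestokes}) are exactly those of Problem~\ref{PC1} (resp.\ Problem~\ref{PD1}), with the pressure $P$ in the role of the multiplier $\lambda$ and the normalisations $P_h^0=0$, $\lambda_h^0=0$ in agreement; moreover the MINI spaces \eqref{Xh-mini}--\eqref{Mh-mini} are conforming, $\X_h\subset\X$ and $\M_h\subset\M$.

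Next I would verify the hypotheses required by Theorem~\ref{erroru} (hence by Lemma~\ref{lemmasigma}). The continuous assumptions {H1}--{H6} are precisely those of Theorem~2.1 of \cite{AGL}, which we are assuming; note in particular that $g\equiv\cero\in\H^1(0,T;\M')$ holds trivially and that $\lambda=P\in{C}^1(0,T;\lodoso)$ by the regularity hypothesis of the statement. On the discrete side, {H8} is the uniform discrete inf-sup condition \eqref{d-inf-sup-S}, established for the MINI pair in Lemma~\ref{mini-stable}, so $\{\beta_h\}_{h>0}$ is bounded below by a positive constant independent of $h$. Hypothesis {H7} holds with $\xi_h=0$ and $\alpha_h=\nu$ by \eqref{garding-d-ST} and the remark following it (the $\hounoO^3$-norm being the gradient seminorm), so $\{\xi_h\}_{h>0}$ and $\{\alpha_h\}_{h>0}$ are trivially uniformly bounded. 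Finally the imposed regularity $\un\in\H^1(0,T;\hounoO^3)\cap\H^2(0,T;\ldoso^3)$ is literally $u\in\H^1(0,T;\X)\cap\H^2(0,T;\Y)$, and $\Delta t$ is assumed small enough, so every hypothesis of Theorem~\ref{erroru} is in force.

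It then remains to apply Theorem~\ref{erroru} and to rewrite its conclusion in concrete notation, using $\langle R(u(t_n)-u_h^n),u(t_n)-u_h^n\rangle_{\Y}=\norm{\un(t_n)-\unhn}_{\ldoso^3}^2$, $\norm{u(t_n)-u_h^n}_{\X}^2=\norm{\un(t_n)-\unhn}_{\hounoO^3}^2$, $\norm{u_0-u_{0,h}}_{\Y}^2=\norm{\un_0-\un_{0,h}}_{\ldoso^3}^2$, $(\Delta t)^2\norm{\partial_{tt}u}_{\L^2(0,T;\Y)}^2=(\Delta t)^2\int_0^T\norm{\partial_{tt}\un(t)}_{\ldoso^3}^2\dt$, and replacing the infima over $\X_h$ and $\M_h$ by infima over the MINI spaces. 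Reconciling the right-hand side with the exact form stated is then routine: the additional summand $\Delta t\sum_{n=1}^N\inf_{\vn\in\xho}\norm{\un(t_n)-\vn}_{\hounoO^3}^2$ is bounded by $T$ times the corresponding maximum (it is, in fact, the $\Delta t\sum_k\norm{\rho_h^k}_{\X}^2$ term already appearing in the proof of Lemma~\ref{lemmasigma}), and the index range in the maximum is adjusted by absorbing the $n=0$ contribution into the initial-data and $\rho_h(0)$ terms.

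I do not expect a genuine obstacle: the proof is essentially a dictionary between the abstract framework of Sections~\ref{discreto}--\ref{estimates} and the Stokes data. The only points worth a careful line are that the MINI element satisfies {H7}--{H8} uniformly in $h$ (Lemma~\ref{mini-stable} together with the elementary norm identity recorded before \eqref{garding-d-ST}) and that the regularity hypotheses placed on $(\un,P)$ in the statement match exactly those demanded by Lemma~\ref{lemmasigma} and Theorem~\ref{erroru}.
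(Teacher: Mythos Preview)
Your proposal is correct and matches the paper's own approach exactly: the paper states Theorem~\ref{cea-u-ST} as ``a direct application of Theorem~\ref{erroru}'' without further detail, and your argument is precisely the dictionary needed to carry out that application, including the verification of H7--H8 via Lemma~\ref{mini-stable} and \eqref{garding-d-ST}. Your bookkeeping remarks about the redundant $\Delta t\sum_n$ term and the index range of the maximum are also accurate.
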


The Cea-like error estimates for the approximation of the pressure is given by the following theorem, 
which is obtained 
by a direct application of Theorem~\ref{error-lambda}.

\begin{theorem}\label{cea-P-ST}
Under the assumptions of Theorem~\ref{cea-u-ST}, if $\boldsymbol{f}\in\H^1(0,T;\ldoso^3)$ 
then, {for a small enough time step $\Delta t$,} 
there exist a constant $C>0$ independent of $h$ and $\Delta t$, such that 
\begin{equation}\nonumber
\begin{split}
&\Delta t\sum_{n=1}^N \|P(t_n)-\Phn\|_{\ldoso}^2\\
&\quad\qquad\leq C
\left[(\Delta t)^2 \|\partial_t \boldsymbol{f}\|^2_{\L^2(0,T;\ldoso^3)}
+(\Delta t)^2 \|\partial_t \un\|^2_{\L^2(0,T;\hounoO^3)}
\phantom{\left(\underset{\mu\in\M_h}{\inf}\|\lambda(t_n)-\mu\|_{\M}\right)^2}
\right.
\\
&\:\qquad\qquad\quad\left.
+ \Delta t\sum_{n=1}^N\left(\underset{q\in\M_h}{\inf}\|P(t_n)-q\|_{\ldoso}\right)^2
+ \Delta t\sum_{n=1}^N \|\un(t_n)-\unhn\|^2_{\hounoO^3}\right].
\end{split}
\end{equation}
\end{theorem}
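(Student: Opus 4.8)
The plan is to obtain Theorem~\ref{cea-P-ST} as a direct specialization of the abstract estimate in Theorem~\ref{error-lambda} to the Stokes setting, so the proof is essentially a dictionary translation plus verification that all hypotheses hold. First I would record the identification of abstract objects: $\X=\houno^3$, $\Y=\ldoso^3$, $\M=\lodoso$, the operators $\langle R\un,\vn\rangle_{\Y}=\int_\Omega\un\cdot\vn$ and $\langle A\un,\vn\rangle_{\X}=\nu\int_\Omega\nabla\un:\nabla\vn$, the bilinear form $b(\vn,q)=-\int_\Omega q\dive\vn$, and the Lagrange multiplier $\lambda=P$. With these identifications, Problem~\ref{variationalstokes} is exactly Problem~\ref{PC1} and Problem~\ref{discretestokes} is exactly Problem~\ref{PD1}, with $u_h^0=\un_{0,h}$ and $\lambda_h^0=0$ matching the required initialization.

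Next I would check the hypotheses of Theorem~\ref{error-lambda}, which amounts to the hypotheses of Theorem~\ref{erroru} (hence of Lemma~\ref{lemmasigma}) plus $f\in\H^1(0,T;\X')$. The discrete inf-sup condition H8 with $\beta_h=\beta$ independent of $h$ is Lemma~\ref{mini-stable}; in particular $\{\beta_h\}_{h>0}$ is bounded uniformly in $h$. The discrete G\r{a}rding inequality H7 holds with $\xi_h=0$ and $\alpha_h=\nu$ because $\|\vn\|_{\hounoO^3}^2=\int_\Omega|\nabla\vn|^2$, so $\{\xi_h\}$ and $\{\alpha_h\}$ are trivially bounded uniformly in $h$. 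The regularity $\un\in\H^1(0,T;\hounoO^3)\cap\H^2(0,T;\ldoso^3)$ and $P\in{C}^1(0,T;\lodoso)$ are assumed in Theorem~\ref{cea-u-ST}, and $\boldsymbol f\in\H^1(0,T;\ldoso^3)\subset\H^1(0,T;\X')$ is the extra hypothesis; the continuous well-posedness (and $\lambda(0)=P(0)=0$) comes from Theorem~2.1 of \cite{AGL} as invoked in Theorem~\ref{cea-u-ST}.

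With all hypotheses verified, I would simply invoke Theorem~\ref{error-lambda} and rewrite each term of its right-hand side in Stokes notation: $\|\partial_t f\|_{\L^2(0,T;\X')}$ is bounded by $\|\partial_t\boldsymbol f\|_{\L^2(0,T;\ldoso^3)}$ using the continuous embedding $\ldoso^3\hookrightarrow(\hounoO^3)'$, the term $\|\partial_t u\|_{\L^2(0,T;\X)}$ becomes $\|\partial_t\un\|_{\L^2(0,T;\hounoO^3)}$, the best-approximation term $\inf_{\mu\in\M_h}\|\lambda(t_n)-\mu\|_{\M}$ becomes $\inf_{q\in\M_h}\|P(t_n)-q\|_{\ldoso}$, and $\|u(t_n)-u_h^n\|_{\X}$ becomes $\|\un(t_n)-\unhn\|_{\hounoO^3}$. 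This reproduces exactly the claimed inequality.

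There is no real obstacle here — the work was already done in Theorem~\ref{error-lambda}; the only thing requiring a moment of care is confirming that the MINI element gives an $h$-uniform inf-sup constant (so that the constant $C$ is genuinely independent of $h$), which is precisely the content of Lemma~\ref{mini-stable}, and noting that the "small enough $\Delta t$" proviso is inherited from Lemma~\ref{lemmasigma} through Theorem~\ref{erroru}. I would therefore keep the proof to a short paragraph: state the identifications, point to Lemma~\ref{mini-stable} for H8 and to the norm identity for H7 with $\xi_h=0$, note that the remaining hypotheses are those assumed in Theorem~\ref{cea-u-ST} together with $\boldsymbol f\in\H^1(0,T;\ldoso^3)$, and conclude by applying Theorem~\ref{error-lambda} and translating notation.
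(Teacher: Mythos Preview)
Your proposal is correct and matches the paper's approach exactly: the paper states that Theorem~\ref{cea-P-ST} ``is obtained by a direct application of Theorem~\ref{error-lambda},'' and your dictionary translation together with the verification of H7--H8 via Lemma~\ref{mini-stable} and the norm identity is precisely what that direct application entails.
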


Finally, to obtain the asymptotic error estimate for the velocity approximation, we need to consider 
the vector-valued functions of the Sobolev space $\mathrm{H}^{1+s}(\Omega)^3$ for $0<s<1$. 
We recall that the vector Lagrange interpolant $\mathcal{\boldsymbol{L}}_h \vn\in X_h$ is obtained
by taking the scalar Lagrange interpolant  of each component, thus it is
well defined for all $\vn\in\mathrm{H}^{1+s}(\Omega)^3\cap\houno^3$. 
Moreover, \[\mathcal{\boldsymbol{L}}_h:\mathrm{H}^{1+s}(\Omega)^3\cap\houno^3\to\X_{1,h},\]
where $\X_{1,h}$ is the proper subspace of $\X_h$ given by
\begin{equation}\nonumber
\begin{split}
\X_{1,h}
:=\set{\vn\in\left[{C}(\Omega)\cap\houno\right]^d:\: \vn\vert_{K}\in\poli_1^d\quad\forall K\in\mathcal{T}_h}.
\end{split}
\end{equation}
Furthermore, the following estimate holds true {(see, for instance, \cite{ciarlet})}
\begin{equation}\nonumber
\norm{\vn-\mathcal{\boldsymbol{L}}_h \vn}_{\houno^3} \leq C
h^{s} \norm{\vn}_{\mathrm{H}^{1+s}(\Omega)^3}
\qquad \forall \vn\in\mathrm{H}^{1+s}(\Omega)^3\cap\houno^3. 
\end{equation}

On the other hand, to obtain the asymptotic error estimate for the pressure approximation, 
we notice that the scalar Lagrange interpolant 
$\mathcal{{L}}_h:\mathrm{H}^{1+s}(\Omega)\cap\lodoso\to\M_h$
verifies
\begin{equation}\nonumber
\norm{q-\mathcal{{L}}_hq}_{\ldoso} \leq C
h^{s} \norm{q}_{\mathrm{H}^{1+s}(\Omega)}
\qquad \forall q\in\mathrm{H}^{1+s}(\Omega)\cap\lodoso. 
\end{equation}


Consequently, we have the following result which shows the asymptotic convergence 
of the fully-discrete approximation for the Stokes problem.

\begin{corollary}
Let $0<s<1$ a fixed index. Under the assumptions of Theorem \ref{cea-P-ST}. 
Then, if we define
\[
\un_{0,h}:=\mathcal{\boldsymbol{L}}_h\un_0,
\]
{for a small enough time step $\Delta t$,} 
there exist a constant $C>0$ independent of $h$ and $\Delta t$, such that 
\begin{align*}
\max_{1\leq n\leq N}\|\un(t_n) - \un_h^n\|_{\ldoso^3}^2 
&+ \Delta t\sum_{n=1}^{N}\|\un(t_n) - \un_h^n\|_{\hounoO^3}^2\\
&
+ \Delta t\sum_{n=1}^N \|P(t_n)-\Phn\|_{\ldosod}^2
\:\leq \:C
\left[
h^{2s} + (\Delta t)^2
\right].
\end{align*}
\end{corollary}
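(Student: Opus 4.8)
The plan is to assemble the final corollary by combining the two \emph{C\'ea}-like estimates (Theorem~\ref{cea-u-ST} and Theorem~\ref{cea-P-ST}) and inserting the interpolation bounds recorded just before the statement. First I would observe that, under the stated regularity $\un\in\H^1(0,T;\H^{1+s}(\Omega)^3)\cap\H^2(0,T;\ldoso^3)$ and $P\in\mathcal{C}^1(0,T;\H^{1+s}(\Omega))$ (which is implicit in the hypotheses referring back to Theorem~\ref{cea-P-ST}), every infimum over $\X_h$ or $\M_h$ appearing on the right-hand sides can be bounded by the corresponding Lagrange-interpolant estimate: for each fixed $t$ (or $t_n$),
\[
\inf_{\vn\in\X_h}\Vert\un(t)-\vn\Vert_{\hounoO^3}\le\Vert\un(t)-\mathcal{\boldsymbol{L}}_h\un(t)\Vert_{\hounoO^3}\le C\,h^{s}\Vert\un(t)\Vert_{\mathrm{H}^{1+s}(\Omega)^3},
\]
and similarly $\inf_{\vn\in\X_h}\Vert\partial_t\un(t)-\vn\Vert_{\hounoO^3}\le C h^s\Vert\partial_t\un(t)\Vert_{\mathrm{H}^{1+s}(\Omega)^3}$, $\inf_{q\in\M_h}\Vert P(t_n)-q\Vert_{\ldoso}\le C h^s\Vert P(t_n)\Vert_{\mathrm{H}^{1+s}(\Omega)}$, and $\inf_{q\in\M_h}\Vert\partial_t P(t_n)-q\Vert_{\ldoso}\le C h^s\Vert\partial_t P(t_n)\Vert_{\mathrm{H}^{1+s}(\Omega)}$.

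Next I would handle the initial-data term: since $\un_{0,h}:=\mathcal{\boldsymbol{L}}_h\un_0$ and the interpolation estimate holds in the $\hounoO^3$-norm (hence a fortiori in the weaker $\ldoso^3$-norm), we get $\Vert\un_0-\un_{0,h}\Vert_{\ldoso^3}^2\le C h^{2s}\Vert\un_0\Vert_{\mathrm{H}^{1+s}(\Omega)^3}^2$. Then, plugging these bounds into the estimate of Theorem~\ref{cea-u-ST}, the discrete-in-time sums $\Delta t\sum_{n=1}^N(\cdot)$ and the integrals $\int_0^T(\cdot)\dt$ of the $h^{2s}$-bounded quantities are each controlled by $C h^{2s}$ times a norm of $\un$ or $P$ over $[0,T]$ (using $\Delta t\sum_{n=1}^N\le T$ for the time sums and the $\L^2(0,T;\cdot)$ regularity for the integrals), while the term $(\Delta t)^2\Vert\partial_{tt}\un\Vert_{\L^2(0,T;\ldoso^3)}^2$ is already of the form $C(\Delta t)^2$. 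This yields
\[
\max_{1\le n\le N}\Vert\un(t_n)-\unhn\Vert_{\ldoso^3}^2+\Delta t\sum_{n=1}^N\Vert\un(t_n)-\unhn\Vert_{\hounoO^3}^2\le C\bigl(h^{2s}+(\Delta t)^2\bigr).
\]

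Finally I would insert this velocity bound, together with the interpolation estimates for $P$ and $\partial_t P$ and the trivial bound $(\Delta t)^2\Vert\partial_t\un\Vert_{\L^2(0,T;\hounoO^3)}^2\le C(\Delta t)^2$, into Theorem~\ref{cea-P-ST}; the term $\Delta t\sum_{n=1}^N\Vert\un(t_n)-\unhn\Vert_{\hounoO^3}^2$ on its right-hand side is exactly what the velocity estimate already controls by $C(h^{2s}+(\Delta t)^2)$, so one obtains $\Delta t\sum_{n=1}^N\Vert P(t_n)-\Phn\Vert_{\ldoso}^2\le C(h^{2s}+(\Delta t)^2)$. Adding the two displays gives the claimed inequality. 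The only genuinely delicate point — and the step I expect to require the most care — is bookkeeping the regularity: one must make sure the hypotheses carried over from Theorem~\ref{cea-P-ST} actually include the space-regularity $\mathrm{H}^{1+s}$ in $\Omega$ for $\un(t)$, $\partial_t\un(t)$, $P(t)$, and $\partial_t P(t)$ at (almost) every $t$, so that the Lagrange interpolant is well defined and the $h^s$ estimates apply; the constant $C$ then depends on these $\L^2(0,T;\mathrm{H}^{1+s})$ and $\mathcal{C}^0(0,T;\mathrm{H}^{1+s})$ norms, but not on $h$ or $\Delta t$, as required. Everything else is the routine absorption of $\Delta t\sum$ and $\int_0^T$ against $h^{2s}$ and collecting powers of $\Delta t$.
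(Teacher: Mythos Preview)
Your proposal is correct and follows exactly the paper's approach: the paper's proof is the single sentence ``Corollary follows immediately by combining Theorem~\ref{cea-u-ST} and Theorem~\ref{cea-P-ST},'' and you have simply spelled out the routine insertion of the Lagrange interpolation bounds (for $\un$, $\partial_t\un$, $P$, $\partial_tP$, and the initial datum $\un_0$) into those two C\'ea-like estimates. Your remark about the implicit $\mathrm{H}^{1+s}$ regularity needed for the interpolants is a fair observation, but it does not change the argument.
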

\begin{proof}
Corollary follows immediately by combining  Theorem~\ref{cea-u-ST} and Theorem~\ref{cea-P-ST}.
\end{proof}

\subsection{Eddy current problem}
The eddy current problem arises when the displacement currents can
be dropped from Maxwell's equation (see \cite[Chapter 9]{bossavit} and \cite{AB}). The aim of the eddy current problem is to determine the eddy currents induced in a three-dimensional conducting domain represented by an open and bounded set $\oc$. In this way,
let $\Omega\subset \R^3$ be a bounded domain simply connected with a connected boundary $\partial\Omega$. We suppose
that $\Omega$ is divided two regions: a conductor domain $\oc$ and insulator domain $\od$, where $\od=\Omega\setminus\oc$.

{
We refer some papers about the numerical analysis for the time-dependent eddy current model in 
bounded domains containing conductor and isolator materials \cite{BLRS4,WB,ZCWNM06}. 
These articles deal with the case where 
the conducting materials are strictly contained in a three-dimensional domain. In general, these formulations present natural or/and 
essential boundaries conditions depending on the main variable. For a treatment of time-dependent eddy current model with current or voltage excitations, 
we refer the reader to \cite{BLRS1,blrs:13}.  Other parabolic formulations for the time-dependent eddy current problem posed in the conductor domain 
can be founded in \cite{CR,A4}.
}

The abstract theory developed in Section~\ref{discreto} and  in Section~\ref{estimates} can be used to study the fully discrete  mixed formulations proposed for the eddy current model in \cite{MS1,A1,A2,blrs:13}, but we will only focus in the numerical analysis for the formulations studied in \cite{A1} and \cite{blrs:13}. 
The well-posedness of the continuous variational formulations corresponding to \cite{A1} and \cite{blrs:13} was proved in \cite[Theorem 3.2 and Theorem 3.4]{AGL}, respectively.

In both cases, the formulation of  the eddy current problem was deduced in  terms of a time-primitive of the electric field, i.e., in terms of the unknown $\un$ given by
\begin{equation*}\label{relauE}
\un(\xn,t):=\int_0^t \bE(\xn,s)ds.
\end{equation*}
In \cite{A1}, it was proposed for an internal conductor $\partial\oc\cap \partial\Omega = \emptyset$, with the boundary condition
\begin{align*}
\bE\times \nn=\cero \quad \text{on}\quad  [0,T]\times \partial\Omega.
\end{align*}
Furthermore, to determine the uniqueness of $\bE$  they must add the following conditions:
\begin{align*}
\dive(\varepsilon\bE)=0&\qquad\textrm{in }\od\times[0,T),
\\
\int_{\Sigma_i}\varepsilon\bE|_{\od}\cdot\bn \:=\:0&
\qquad \textrm{in }[0,T),\quad i=1,\dots,M_I,
\end{align*}
where $\Sigma_i$, $i=1,...,M_I,$ are the connected components of $\Sigma:=\partial\oc$.

On the other hand, in \cite{blrs:13} the eddy current model was analyzed with input currents intensities as source data. 
{
In this case, the conductor can be not strictly contained in $\Omega$ and 
$\Gamma_C:=\partial \Omega_C\cap \partial \Omega$ can be not empty.
They have assumed $\partial \Omega_C\cap \partial \Omega=\GE\cup\GJ$, 
where $\GJ$ is the boundary associated with the current entrance (the conductor is connected to 
a source current) and $\GE$ is is the boundary associated with the current exit.  Furthermore, $\GJ=\bigcup_{k=1}^{k=N}{\GJ^k}$, 
where $\GJ^k$ are the connected components of $\GJ$ (see Figure~\ref{sketch01}).
 }

\begin{figure}[!ht]
	\begin{center}
		\includegraphics*[width=7cm]{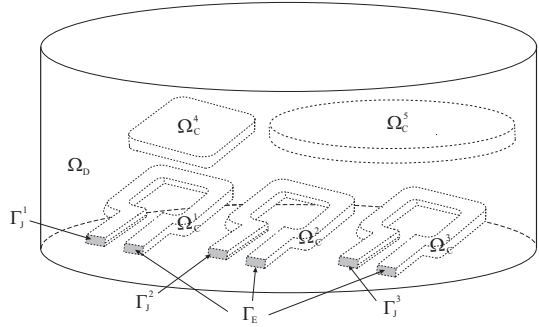}	
	\end{center}
	\caption{Sketch of the domain}
	\label{sketch01}
\end{figure}
The intensities of the input current are imposed by
\[
\int_{\GJ^n}\sigma\bE\cdot\bn=I_n \quad\text{in }[0,T],\quad n=1,\ldots,N,
\]
where $I_n$ is the current intensity through the surface $\GJ^n$, and the following boundary conditions was proposed
\begin{align*}
\bE\times\nn=\cero &\quad\text{on }[0,T]\times\GE,
\\
\bE\times\nn=\cero &\quad\text{on }[0,T]\times\GJ,
\\
\mu\bH\cdot\nn=0 &\quad\text{on }[0,T]\times\partial\Omega,
\end{align*}
where $\bH$ is the magnetic field. In this case, $\bE$ is uniquely determined provided that
\begin{align*}
\dive(\varepsilon \bE)=0&\quad \text{in } [0,T]\times\od,
\\
\epsilon \bE|_{\od}\cdot\bn =g &\quad \text{on }[0,T]\times\Gd,
\\
\int_{\GI^k}\varepsilon\bE|_{\od}\cdot\bn=0, &\quad
k=2,\dots,M_{I}, \quad \mbox{in }[0,T],
\end{align*}
{where $\Gd:=\partial\Omega\cap\partial\od$ and $\GI^k:=\partial\oc^k\cap\partial\od$, 
$k=1,\dots,M_{I},$ are the connected components of 
the interface boundary $\GI$, which is the boundary between the conductor domain and the insulator domain 
(see Figure~\ref{domain}, $M_{I}=5$). Furthermore, $g$ is an additional data.}

To show the application of the  abstract theory in the models studied in \cite{A1} and \cite{blrs:13} is necessary to introduce the 
following functional spaces:
\begin{align*}
\hcurlo&:=\left\{\vn\in\ldoso^3:\: \curl\vn\in\ldoso^3\right\},\\
\hocurlo&:=\left\{\vn\in\hcurlo:\: \vn\times\nn=\cero \quad \text{on}\quad \partial
\Omega\right\},\\
\mathbf{H}(\curl^0;\Omega)&:=\left\{\vn\in\hcurlo:\: \curl\vn=\cero \quad \text{in}\quad
\Omega\right\}.
\end{align*}
Finally, given a subset $\Lambda\subset \partial\Omega$, we denote by 
\begin{equation*}
 \mathbf{H}_{\Lambda}(\curl;\Omega):=\left\{\vn\in\hcurlo:\: \vn\times \nn=\cero \quad \text{on}\quad
\Lambda\right\}
\end{equation*}
and 
\begin{equation*}
 \mathbf{H}_{\Lambda}(\curl^0;\Omega):=\mathbf{H}(\curl^0;\Omega)\cap \mathbf{H}_{\Lambda}(\curl;\Omega).
\end{equation*}
Similarly, for $\hdive$, $\hodive$, $\mathbf{H}_{\Lambda}(\dive;\Omega)$ and  
{
\[
\mathbf{H}_{\Lambda}(\dive^0_{\varepsilon};\Omega)=\left\{\wn\in\mathbf{H}_{\Lambda}(\dive;\Omega): \dive({\varepsilon\wn})=0 \textrm{ in }\Omega\right\}.
\]
}
From now on, we refer to the problem studied in \cite{A1} as \textit{internal conductor model} and the problem studied 
in \cite{blrs:13} as \textit{the input current model}. Furthermore, we assume that $\Omega$ and $\oc$ are Lipschitz polyhedra and that $\set{\mathcal{T}_h}_h$ is a regular family of tetrahedral meshes of $\Omega$ such that each element $K\in\mathcal{T}_h$ is contained either in $\overline{\Omega}_{\mathrm{c}}$ or in $\overline{\Omega}_{\mathrm{d}}$. As usual, $h$ stands for the largest diameter of the tetrahedra $K$ in $\mathcal{T}_h$. Finally, we suppose that the family of triangulations $\{\mathcal{T}_h(\Sigma)\}_h$ induced by $\{\mathcal{T}_h\}_h$ on $\Sigma$ is quasi-uniform.

\subsubsection{Internal conductor model}\label{IC}
Let us define 
\[
M:= \left\{\vn\in \H^1(\od): \mu|_{\partial{\Omega}}=0, \mu|_{\Sigma_i}=C_i,\, i=1,\cdots M_I \right\}
\]
endowed with usual norm in $\H^1(\od)$.
The variational formulation of eddy current model with internal conductor (see \cite{A1}) is given by

\begin{problem}\label{mix1-int}
 Find $\un\in\ldoshocurlo$ and $\lambda\in\H^1(0,T;\M)$ such that 
\begin{align*}
&\dfrac{d}{dt}\left[\int_{\oc}\sigma\un(t)\cdot\vn + \int_{\od}\varepsilon\vn\cdot\nabla\lambda(t)\right]
+ \int_{\Omega}\frac{1}{\mu}\curl\un(t)\cdot\curl\vn =\int_{\Omega}\fn(t)\cdot\vn&&\forall \vn\in\hocurlo, \\
& \int_{\od}\varepsilon\un(t)\cdot\nabla\mu = 0 &&\forall \mu\in\M ,\\
&\un(\cdot,0)=\cero\quad \textrm{in }\oc\quad \textrm{and }\quad
\lambda(0)=0 \quad \textrm{in }\od,
\end{align*}
\end{problem} 
where 
\begin{align*}
\langle \textbf{f}(t),\vn\rangle=\int_{\Omega} \textbf{f}(t)\cdot\vn=\int_{\Omega}\curl \textbf{H}_{0}\cdot\vn-\int_{\Omega} \textbf{J}(t)\cdot\vn\quad  \forall t\in [0,T]\quad \forall\vn
\in \X_h.
\end{align*}
with $\textbf{ H}_0$ the initial magnetic condition and $\textbf{J}\in\ldos(0,T;\L^2(\Omega))$.

To obtain the fully-discrete approximation for the eddy current formulation given by Problem \ref{mix1-int}, we use finite element subspaces to define $X_h$ and $M_h$, the corresponding families of finite dimensional subspaces of $X:=\hocurlo$ and $M:=\M(\od)$ respectively (see Section~\ref{discreto}). 

We define $X_h$ using N{\'e}d{\'e}lec finite elements, more precisely $X_h$ is the global N{\'e}d{\'e}lec finite elements subspace, which is defined by 
\begin{equation}\label{defXh}
X_h:=\left\{\vn\in\hocurlo:\ \vn\vert_{K}\in\nd(K)\ \forall K\in\mathcal{T}_h\right\},
\end{equation}  
where $\nd(K)$ is the local representation on $K$ of the lowest-order N{\'e}d{\'e}lec finite elements subspace
\[\nd(K):=\{\mathbf{a}\times\xn+\mathbf{b}:\mathbf{a},\mathbf{b}\in\R^{3},\ \xn\in K\}.\]

On the other hand, we use standard linear Lagrange finite elements to define $M_h$, i.e.,
\begin{equation}\label{defMh}
M_h:=\left\{\mu\in\hunod:
\ \mu\vert_{K}\in\mathbb{P}_1(K)\ \forall K\in\mathcal{T}_h,\, K\subset\adhod,
\ \mu\vert_\Gamma=0,\ \mu\vert_{\Sigma_i}
=C_i,\ i=1,\dots,I\right\},
\end{equation}
where $\mathbb{P}_m$ is the set of polynomials of degree not greater than $m$.

The corresponding fully-discrete problem of Problem \ref{mix1-int} is given by
\begin{problem}\label{mix1-dis}
 Find $\un_h^n\in\X_h$ and $\lambda_h^n\in \M_h$ for $n=1,\cdots,N$ such that 
\begin{align*}
&\left[\int_{\oc}\sigma\frac{\un_h^n-\un_h^{n-1}}{\Delta t}\cdot\vn + \int_{\od}\varepsilon\vn\cdot\frac{\nabla\lambda_h^n-\nabla\lambda_h^{n-1}}{\Delta t}\right]
+ \int_{\Omega}\frac{1}{\mu}\curl\un_h^n\cdot\curl\vn =\int_{\Omega}\fn(t_n)\cdot\vn&&\forall \vn\in\X_h, \\
& \int_{\od}\varepsilon\un_h^n\cdot\nabla\mu = 0 &&\forall \mu\in\M_h ,\\
&\un_h^0=0\quad \textrm{in}\quad \Omega\quad \textrm{and}\quad \lambda_h^0=0 \quad \textrm{in }\quad \od.
\end{align*}
\end{problem} 

In this case, we can notice that the discrete kernel of $b$ is defined by
\begin{equation}\label{defVh}
V_h=\left\{\vn\in \xho: b(\vn,\mu)=0 \quad \forall \mu\in \moh\right\}
=\left\{\vn\in \xho: \int_{\od}\varepsilon\vn\cdot\nabla\mu=0 \quad \forall \mu\in \moh\right\}.
\end{equation}
\\
\textbf{Existence and uniqueness of fully discrete-solutions.}
\\
In order to deduce the existence and uniqueness of solution for the fully-discrete approximation, we have to show that the hypotheses  H7--H8 hold. The proof of discrete \textit{inf-sup} condition H7 is completely analogous to the deduction of its continuous version H1 inside the proof of of \cite[Theorem 3.2]{AGL}. For this reason, we only show the proof of H8. To this aim, we first need to recall the following result.

\begin{lemma}
Let $X_h(\oc):=\{\vn\vert_{\oc}:\ \vn\in X_h\}$. There exists a bounded and linear mapping $\E_h:X_h(\oc)\to V_h$ satisfying:
\begin{enumerate}
\item[a)] $\E_h$ is bounded uniformly in $h$. 
\item[b)] $(\E_h\vn_c)\vert_{\oc}=\vn_c$ for all $\vn_c\in X_h(\oc)$.
\end{enumerate} 
\end{lemma}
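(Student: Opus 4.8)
\emph{Overview.} I would build $\E_h$ in two steps: a crude extension of $\vn_c$ into $X_h$ obtained by prescribing Nédélec degrees of freedom, followed by a correction by a discrete gradient supported in $\adhod$ which enforces $\vn\in V_h$ without touching the restriction to $\oc$.

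\emph{Step 1: a crude extension into $X_h$.} For $\vn_c\in X_h(\oc)$ I would define $R_h\vn_c\in X_h$ by setting the Nédélec edge moment of $R_h\vn_c$ on every edge $e$ of $\mathcal{T}_h$ with $e\subset\adhoc$ equal to the corresponding moment of $\vn_c$ (these are exactly the degrees of freedom that determine $\vn_c$ on $\oc$, including those lying on $\Sigma$), and all remaining edge moments equal to $0$. Then $R_h\vn_c$ vanishes on every $K\subset\adhod$ having no edge on $\Sigma$; on the $K\subset\adhod$ that touch $\Sigma$, any edge of $K$ contained in $\partial\Omega$ carries a zero moment, so the tangential trace of $R_h\vn_c$ on $\partial\Omega$ vanishes. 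Hence $R_h\vn_c\in\hocurlo$ and, being piecewise in $\nd(K)$, it belongs to $X_h$; moreover $R_h$ is linear and $(R_h\vn_c)\vert_{\oc}=\vn_c$. The one substantive point is the $h$-independent bound $\norm{R_h\vn_c}_{\hcurlo}\le C\norm{\vn_c}_{\hcurloc}$, obtained by an element-by-element scaling argument on the single layer of tetrahedra of $\oc$ and $\od$ abutting $\Sigma$ together with a discrete trace inequality; this is exactly where the assumed quasi-uniformity of $\{\mathcal{T}_h(\Sigma)\}_h$ is used, and I expect this uniform estimate to be the main obstacle (alternatively, one may invoke the existence of such an $h$-uniformly bounded discrete edge-element extension from the literature).

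\emph{Step 2: cleaning the discrete divergence.} The relevant correctors are the fields $\widetilde{\nabla\mu}$, $\mu\in M_h$, equal to $\nabla\mu$ on $\od$ and to $\cero$ on $\oc$. Such a field lies in $X_h$: it is a constant vector on each $K$, hence in $\nd(K)$; its tangential trace from the $\od$-side across $\Sigma_i$ is the surface gradient $\nabla_{\Sigma_i}\mu=\cero$, since $\mu\equiv C_i$ on $\Sigma_i$, which matches the zero trace from $\oc$; and it vanishes on $\partial\Omega$ because $\mu\vert_{\partial\Omega}=0$. Let $\mu^\star\in M_h$ solve the symmetric positive–definite problem
\[
\int_{\od}\varepsilon\,\nabla\mu^\star\cdot\nabla\eta=\int_{\od}\varepsilon\,R_h\vn_c\cdot\nabla\eta\qquad\forall\eta\in M_h,
\]
which is well posed because $\mu\mapsto\int_{\od}\varepsilon\,\nabla\mu\cdot\nabla\mu$ is coercive on $M_h\subset\H^1(\od)$ by the Poincaré–Friedrichs inequality on $\od$ (using $\mu\vert_{\partial\Omega}=0$) and the uniform positivity of $\varepsilon$. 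Then set
\[
\E_h\vn_c:=R_h\vn_c-\widetilde{\nabla\mu^\star}.
\]

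\emph{Conclusion.} The operator $\E_h$ is linear. Since $\widetilde{\nabla\mu^\star}$ vanishes on $\oc$, we get $(\E_h\vn_c)\vert_{\oc}=(R_h\vn_c)\vert_{\oc}=\vn_c$, which is b). By the choice of $\mu^\star$, $\int_{\od}\varepsilon\,(\E_h\vn_c)\cdot\nabla\eta=0$ for all $\eta\in M_h$, so $\E_h\vn_c\in V_h$ by \eqref{defVh}. For a), testing the defining problem with $\eta=\mu^\star$ and using the uniform positivity of $\varepsilon$ gives $\norm{\nabla\mu^\star}_{\ldosod}\le C\norm{R_h\vn_c}_{\ldosod}$ with $C$ independent of $h$; since $\curl\widetilde{\nabla\mu^\star}=\cero$, this yields $\norm{\widetilde{\nabla\mu^\star}}_{\hcurlo}\le C\norm{R_h\vn_c}_{\hcurlo}$, and combining with Step 1 we conclude $\norm{\E_h\vn_c}_{\hcurlo}\le C\norm{\vn_c}_{\hcurloc}$ uniformly in $h$.
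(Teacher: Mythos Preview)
Your proposal is correct. The paper itself does not prove this lemma but simply defers to \cite[Lemma~5.3]{A1}.

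Your two-step construction---zero-extension of the N\'ed\'elec edge degrees of freedom followed by a discrete gradient correction---is a clean, self-contained route. The verification that $\widetilde{\nabla\mu}\in X_h$ for $\mu\in M_h$ is exactly right (the constant value of $\mu$ on each $\Sigma_i$ kills the tangential jump, and $\mu\vert_{\partial\Omega}=0$ handles the boundary), and the coercivity used to solve for $\mu^\star$ is immediate. The only delicate point you flag---the $h$-uniform bound on $R_h$---is indeed the standard place where the assumed quasi-uniformity of $\{\mathcal{T}_h(\Sigma)\}_h$ enters, and the scaling/discrete-trace argument you outline is the usual way to close it.

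For comparison, in the analogous construction for the input-current model later in the paper (Lemma~\ref{lemaEh}), the authors instead extend via a right inverse of the tangential trace operator and then correct by solving a $\curl$--$\curl$ mixed problem on $\od$; that approach delivers an additional orthogonality property $\int_{\od}\curl\E_h\vn_c\cdot\curl\wnd=0$ for all $\wnd\in V_{h,\mathrm d}$, which is exploited in that setting. For the internal-conductor lemma at hand, however, only the extension property and the uniform bound are used in the subsequent G\r{a}rding inequality (Lemma~\ref{lemmaGIC}), so your simpler gradient correction is entirely sufficient and arguably more elementary.
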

\begin{proof}
See \cite[Lemma~5.3]{A1}.
\end{proof}

\begin{lemma}\label{lemmaGIC}
There exist positive constants $\widehat\gamma$ and $\widehat\alpha$ such that
\begin{equation}\label{dgarding}
\int_{\Omega}\dfrac{1}{\mu}\vert\curl\vn\vert^2 + \widehat\gamma\int_{\oc}\sigma\vert\vn\vert^2 
\geq \widehat\alpha\norm{\vn}_{\hcurlo}^2\qquad\forall\vn\in V_h,
\end{equation}
where $V_h$ is the discrete  kernel of $b$ given in \eqref{defVh}.
\end{lemma}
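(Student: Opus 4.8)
The goal is a discrete Gårding-type inequality for the curl-curl bilinear form on $V_h$, where the "mass-like" term $\int_{\oc}\sigma\abs{\vn}^2$ only controls $\vn$ in the conductor. The standard obstacle is that $\curl\vn$ and $\vn|_{\oc}$ together do not obviously control $\norm{\vn}_{\hcurlo}$, because the insulator part $\vn|_{\od}$ is only weakly constrained (through the discrete divergence-free condition defining $V_h$). The plan is to argue by contradiction, exactly as in the proof of the continuous version H1 inside \cite[Theorem 3.2]{AGL}, but using the discrete extension operator $\E_h$ from the preceding lemma to handle the conductor-to-global transfer at the discrete level.

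First I would suppose \eqref{dgarding} fails: for every $m$ there is $\vn_m\in V_{h}$ (with $h$ varying or fixed — here $h$ is fixed, so the sequence lives in the finite-dimensional space $V_h$) with $\norm{\vn_m}_{\hcurlo}=1$ and $\int_{\Omega}\frac1\mu\abs{\curl\vn_m}^2+m\int_{\oc}\sigma\abs{\vn_m}^2\to 0$, hence $\curl\vn_m\to\cero$ in $\ldoso^3$ and $\vn_m|_{\oc}\to\cero$ in $\L^2(\oc)^3$. Since $V_h$ is finite-dimensional, pass to a subsequence $\vn_m\to\vn$ in $\hcurlo$ with $\vn\in V_h$, $\curl\vn=\cero$, $\vn|_{\oc}=\cero$, and $\norm{\vn}_{\hcurlo}=1$. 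The contradiction will come from showing $\vn=\cero$. Because $\vn\in\hocurlo$ has vanishing curl on the simply connected $\Omega$, write $\vn=\nabla\phi$ for some $\phi\in\houno$ (this is where simple connectedness of $\Omega$ enters, as in \cite{AGL}); since $\vn|_{\oc}=\cero$ and $\oc$ is connected, $\phi$ is constant on $\oc$, so $\vn|_{\od}=\nabla(\phi|_{\od})$ with $\phi|_{\od}\in M$ (matching the boundary/constant conditions defining $M$, and vanishing on $\partial\Omega$). Testing the defining relation of $V_h$ in \eqref{defVh} — here is the delicate point — I would like to take $\mu=\phi|_{\od}$, but $\phi|_{\od}$ need not lie in $M_h$. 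This is the main obstacle.

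To overcome it I would exploit the extension operator $\E_h$: the point of property (b), $(\E_h\vn_c)|_{\oc}=\vn_c$, is that $V_h$ "sees" every discrete conductor field, and combined with the discrete Helmholtz-type splitting induced by $X_h$ (Nédélec) and $M_h$ (Lagrange) it forces the gradient part of $V_h$ supported in $\od$ to actually be a discrete gradient $\nabla M_h$ — so the limit $\vn$, being a genuine (not merely discrete) gradient of an $M$-function, must be tested consistently. Concretely, I expect the correct route is: since $\vn\in X_h$ and $\vn=\nabla\phi$ with $\phi$ locally affine on each $K\subset\adhod$ (as $\vn|_K\in\nd(K)$ has constant curl $=\cero$, hence $\vn|_K$ is a constant vector, hence $\phi|_K$ is affine) and $\phi$ continuous with the right trace conditions, we get $\phi|_{\od}\in M_h$ after all. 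Then $\mu:=\phi|_{\od}\in M_h$ is admissible in \eqref{defVh}, giving $\int_{\od}\varepsilon\abs{\nabla\phi}^2=\int_{\od}\varepsilon\vn\cdot\nabla\mu=0$, so $\vn|_{\od}=\cero$ as well (using $\varepsilon$ uniformly positive). Hence $\vn=\cero$, contradicting $\norm{\vn}_{\hcurlo}=1$, and \eqref{dgarding} holds with suitable $\widehat\gamma,\widehat\alpha$.

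I should note that since $h$ is fixed here the compactness step is trivial (finite dimension), so the genuine content is the algebraic identification "$\vn\in V_h$, $\curl\vn=\cero$, $\vn|_{\oc}=\cero$ $\Rightarrow$ $\vn=\cero$", which is the discrete analogue of the kernel characterization used for H1 in \cite{AGL}; the extension operator $\E_h$ is what guarantees this discrete kernel is no larger than expected (it rules out spurious curl-free, conductor-vanishing fields in $V_h$ that are not discrete gradients of $M_h$). The main thing to get right is that the Nédélec element of lowest order has piecewise-constant curl, so a curl-free Nédélec field is a piecewise-constant gradient and its potential is a continuous piecewise-affine function, i.e. lies in $M_h$ — making $\phi|_{\od}$ a legitimate test function. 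If uniformity in $h$ were required (it is, for the error analysis), one would replace the finite-dimensional compactness by the uniform bound on $\E_h$ together with a discrete Friedrichs/Poincaré inequality on $\od$, but the structure of the argument is identical.
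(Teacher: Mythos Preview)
Your contradiction argument, as written, proves the inequality only for each fixed $h$: the compactness step uses that $V_h$ is finite-dimensional, so the constants $\widehat\gamma,\widehat\alpha$ you extract depend on $h$. But the lemma is stated (and is needed in Theorem~\ref{erroru} via the hypothesis that $\{\xi_h\}_{h>0}$ and $\{\alpha_h\}_{h>0}$ are uniformly bounded) with constants independent of $h$. A contradiction argument over a sequence $h_m\to0$ would require compactness of a sequence $\vn_m\in V_{h_m}$ in $\hcurlo$, which you do not have: knowing only $\curl\vn_m\to\cero$ and $\vn_m\vert_{\oc}\to\cero$ does not force strong convergence of $\vn_m$ in $\ldoso^3$, so the argument collapses. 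Your closing remark correctly identifies the ingredients that fix this --- the uniform bound on $\E_h$ and a uniform discrete Poincar\'e inequality on curl-free fields in $\od$ --- but with those tools the natural proof is \emph{direct}, not by contradiction, and that is precisely what the paper does.

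The paper's proof writes $\vn=\E_h(\vn\vert_{\oc})+\widetilde\wn$, so that $\widetilde\wn\vert_{\oc}=\cero$ and $\widetilde\wn\vert_{\od}$ lies in the discrete space on which $\wn\mapsto\norm{\curl\wn}_{\ldosod^3}$ is, uniformly in $h$, equivalent to the $\hcurlod$-norm (Hiptmair, Theorem~4.7). One then bounds $\norm{\vn}_{\hcurlo}^2\leq C\bigl(\norm{\vn}_{\hcurloc}^2+\norm{\curl\vn}_{\ldosod^3}^2\bigr)$ directly. Note also that in your own argument the operator $\E_h$ plays no role: your kernel characterization (``$\vn\in V_h$, $\curl\vn=\cero$, $\vn\vert_{\oc}=\cero\Rightarrow\vn=\cero$'') is established purely by recognizing that a curl-free lowest-order N\'ed\'elec field is piecewise constant, hence its potential lies in $M_h$. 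That observation is correct and pleasant, but it only shows the kernel is trivial; the quantitative, $h$-uniform estimate still needs the constructive splitting via $\E_h$.
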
 
\begin{proof}
Let $\vn\in V_h$ and define $\widetilde{\wn}:=\vn - \E_h\vn$. Then $\widetilde{\wn}\in V_h$, 
$\widetilde{\wn}=\cero$ in $\oc$ and $\widetilde{\wn}\vert_{\od}$ belongs to the subspace
\[
V_{\mathrm{d},h}=\left\{\vn\vert_{\od}: \vn\in V_h\right\}\cap\hocurlod,
\]
where $X_h(\od):=\{\vn\vert_{\od}:\ \vn\in X_h\}$. It is well-known that the semi-norm $\wn\mapsto\norm{\curl\wn}_{0,\od}$ is a norm on $V_{\mathrm{d},h}$ uniformly equivalent to the $\hcurlod$-norm (see, e.g.,~Theorem~4.7,~\cite{hiptmair}).
It follows that
\[
\begin{split}
\norm{\widetilde\wn}_{\hcurlo} &= \norm{\widetilde\wn\vert_{\od}}_{\hcurlod} 
\leq C \norm{\left(\curl\widetilde\wn\right)\vert_{\od}}_{\ldosod^3}\\
&\leq C \left\{\norm{\curl\E_h\vc}_{\ldosod^3} + \norm{\curl\vn}_{\ldosod^3}\right\}
\leq C \left\{\norm{\vc}_{\hcurloc} + \norm{\curl\vn}_{\ldosod^3}\right\}.
\end{split}
\]
Consequently,
\[
\begin{split}
\norm{\vn}_{\hcurlo}^2 &= \norm{\E_h\vn + \widetilde\wn}_{\hcurlo}^2
\leq 2\left\{\norm{\E_h\vn}_{\hcurlo}^2 + \norm{\widetilde\wn}_{\hcurlo}^2\right\}\\
&\leq C\left\{\norm{\vn}_{\hcurloc}^2 + \norm{\curl\vn}_{\ldosod^3}^2\right\},
\end{split}
\]
from which the result holds.
\end{proof}
Hence, we have deduced the existence and uniqueness of Problem \ref{mix1-dis}.
\\
\textbf{Error estimates.}
\\
Our next goal is to obtain error estimates for the fully-discrete approximation of the eddy current formulation. Since $\lambda=0$ and $\lambda_{h}^n=0$, we will only be concerned with error estimates for the main variable $\un$. In fact, we have the following result for a direct application of Theorem~\ref{erroru}. 
\begin{theorem}\label{cea-eddy}
Assume that $\un\in \H^1(0,T;\hocurlo)\cap\H^2(0,T;\ldoso^3)$. Then, there exists a constant $C>0$, independent of $h$ and $\Delta t$, such that
\begin{align*}
\max_{1\leq n\leq N}\|\un(t_n) - \un_h^n\|_{\sigma,\oc}^2 
&+ \Delta t\sum_{k=1}^{N}\|\un(t_k) - \un_h^k\|_{\hcurlo}^2\\
&\leq C\left\{\max_{1\leq n\leq N}
\inf_{\vn\in\xho}\Vert\un(t_n)-\vn\Vert_{\hcurlo}^2
+\Delta t
\sum_{n=1}^N\inf_{\vn\in\xho}\Vert\un(t_n)-\vn\Vert_{\hcurlo}^2
\right.\\
&\left.\phantom{\sum_{k=1}^{N}}
+\int_0^T \left(
\inf_{\vn\in\xho}\Vert\partial_t\un(t)-\vn\Vert_{\hcurlo}^2\right)\dt
+(\Delta t)^2\int_0^{T}\left\Vert\partial_{tt}\un(t)\right\Vert
_{\ldoso^3}^2 \dt\right\},
\end{align*}
where 
$\norm{\wn}_{\sigma,\oc}^2:=\displaystyle\int_{\oc}\sigma\vert\wn\vert^2$.
\end{theorem}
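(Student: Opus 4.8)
The strategy is to verify that the internal-conductor problem fits exactly into the abstract framework of Section~\ref{discreto}, and then simply invoke Theorem~\ref{erroru}, taking advantage of the simplification $\lambda\equiv 0$, $\lambda_h^n\equiv 0$. First I would make the dictionary explicit: set $\Y:=\ldoso^3$ (with the weighted inner product $\langle R\cdot,\cdot\rangle_\Y=\int_{\oc}\sigma\,\cdot\,\cdot$, so that $R$ is the extension-by-zero-outside-$\oc$ of the $\sigma$-weighted $\L^2$ product, which is self-adjoint and monotone on all of $\Y$, hence in particular on $V$ and $V_h$), $\X:=\hocurlo$, $\M:=M(\od)$, $b(\vn,\mu):=\int_{\od}\varepsilon\vn\cdot\nabla\mu$, and $\langle A\vn,\wn\rangle_\X:=\int_\Omega\frac1\mu\curl\vn\cdot\curl\wn$ (symmetric and bounded on $\X$). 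With these identifications, Problem~\ref{mix1-dis} is precisely Problem~\ref{PD1}. Hypotheses H1--H6 for the continuous problem hold by \cite[Theorem~3.2]{AGL}; H7 is Lemma~\ref{lemmaGIC} (which gives the Gårding-type inequality uniformly in $h$, so $\{\xi_h\}$, $\{\alpha_h\}$ are trivially bounded) and H8 is the discrete inf-sup condition established just above via the extension operator $\E_h$, with $\{\beta_h\}$ bounded uniformly in $h$.

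Next I would check the regularity hypotheses of Lemma~\ref{lemmasigma} and Theorem~\ref{erroru}: the assumption $\lambda\in\mathcal{C}^1(0,T;\M)$ is immediate because $\lambda\equiv 0$, and the running assumption $\un\in\H^1(0,T;\hocurlo)\cap\H^2(0,T;\ldoso^3)$ is exactly $u\in\H^1(0,T;\X)\cap\H^2(0,T;\Y)$. Therefore Theorem~\ref{erroru} applies verbatim and yields
\[
\max_{1\leq n\leq N}\langle R\,e_h^n,e_h^n\rangle_\Y + \Delta t\sum_{n=1}^N\|e_h^n\|_\X^2
\leq C\Big\{\|u_0-u_{0,h}\|_\Y^2 + \dots + \Delta t\sum_{n=1}^N\big(\inf_{\mu\in\M_h}\|\partial_t\lambda(t_n)-\mu\|_\M\big)^2\Big\}.
\]
Now I would discharge the terms that vanish or simplify. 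The last term drops since $\partial_t\lambda\equiv 0$. For the initial-data term, the third equation of Problem~\ref{mix1-dis} is $\un_h^0=0$ in $\Omega$ while $\un(\cdot,0)=\cero$ in $\oc$; since $R$ sees only the values in $\oc$, we get $\langle R(u_0-u_{0,h}),u_0-u_{0,h}\rangle_\Y=\int_{\oc}\sigma|\,0-0\,|^2=0$, so the $\|u_0-u_{0,h}\|_\Y^2$ contribution is zero. Rewriting $\langle R\,e_h^n,e_h^n\rangle_\Y=\int_{\oc}\sigma|e_h^n|^2=\|e_h^n\|_{\sigma,\oc}^2$ then gives exactly the left-hand side and the four surviving right-hand-side terms of the claimed estimate.

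The only genuinely non-mechanical point — and thus the one I would dwell on — is justifying that $\langle R(u_0-u_{0,h}),u_0-u_{0,h}\rangle_\Y=0$ really is legitimate in the way Theorem~\ref{erroru} uses it; there the bound $\langle R\sigma_h^0,\sigma_h^0\rangle_\Y\le 2\langle R(u_0-u_{0,h}),u_0-u_{0,h}\rangle_\Y+2\langle R\rho_h^0,\rho_h^0\rangle_\Y$ is invoked and the $\rho_h^0$ term is absorbed into the $\inf_{z\in\X_h}\|u(t_n)-z\|_\X$ term via \eqref{infimoPh}, so in fact $\langle R(u_0-u_{0,h}),u_0-u_{0,h}\rangle_\Y=0$ is even more than needed. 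Everything else is a direct transcription: H7 and H8 were already proved for this formulation in the lines preceding the statement, H1--H6 come from \cite[Theorem~3.2]{AGL}, and the interpolation/approximation terms on the right are left in their infimum form, so no element-specific estimates are required at this stage. Hence the proof is essentially: "identify the spaces and operators, verify H1--H8 (done above), apply Theorem~\ref{erroru}, and simplify using $\lambda\equiv 0$ and $u_0=u_{0,h}=0$ on $\oc$."
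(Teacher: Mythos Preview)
Your proposal is correct and follows exactly the route the paper intends: the paper states Theorem~\ref{cea-eddy} as ``a direct application of Theorem~\ref{erroru}'' and gives no further argument, so your explicit dictionary, verification of H7--H8 via Lemma~\ref{lemmaGIC} and the discrete inf-sup, and the simplifications from $\lambda\equiv 0$ and $\un_0=\un_{0,h}=0$ on $\oc$ are precisely the details one must fill in. Your observation that the initial-data term vanishes because $R$ only sees values in $\oc$ (so $\langle R(u_0-u_{0,h}),u_0-u_{0,h}\rangle_\Y=0$ even though Theorem~\ref{erroru} is stated with the cruder $\|u_0-u_{0,h}\|_\Y^2$) is the one point worth making explicit, and you handle it correctly.
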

Finally, to obtain the asymptotic error estimate, we need to consider the Sobolev space
\begin{align}\label{Hr}
\mathbf{H}^r(\curl,Q):=\set{\vn\in \H^r(Q)^3:\ \curl \vn \in
\H^r(Q)^3},\quad r\geq 0
\end{align}
endowed with the norm $\norm{\vn}_{\mathbf{H}^r(\curl,Q)}^2:=\norm{\vn}_{r,Q}^2 + \norm{\curl \vn}_{r,Q}^2$, where $Q$ is either $\oc$ or $\od$. It is well known that the N\'ed\'elec interpolant $\mathcal{I}_h\vn\in X_h(Q)$ is well defined for all $\vn\in\mathbf{H}^r(\curl,Q)$ with $r>1/2$, see for instance \cite[Lemma~5.1]{alonsovalli} or \cite[Lemma~4.7]{amrouche}. We fix now an index $r>1/2$ and introduce the space
\begin{equation}\nonumber
\mathcal{X}:=\mathbf{H}^r(\curl,\Omega)\cap \hocurlo.
\end{equation}
Then, the N\'ed\'elec interpolation operator $I_h^{\mathcal{N}}:{\mathcal{X}}\to\xho$ is uniformly bounded and the following interpolation error estimate holds true; see  \cite[Lemma~5.1]{brs:02} or  \cite[Proposition~5.6]{alonsovalli}:
\begin{equation}\label{bimbo}
\norm{\vn-I_h^{\mathcal{N}}\vn}_{\hcurlo} \leq C
h^{\min\{r,1\}} \norm{\vn}_{\mathcal{X}}
\qquad \forall \vn\in\mathcal{X}. 
\end{equation}
Consequently, we have the following result which shows the asymptotic convergence of the fully-discrete approximation.
\begin{corollary}\label{coroconvucd} 
If $\un\in \H^1(0,T;\mathcal{X}\cap\hocurlo)\cap \H^2(0,T;\ldoso^3)$, there exists a constant $C>0$ independent of $h$ and $\Delta t$, such that
\begin{align*}
\max_{1\leq n\leq N}\|\un(t_n)-\un_h^n\|_{\sigma,\oc}^2 
&+ \Delta t\sum_{k=1}^{N}\|\un(t_k)-\un_h^k\|_{\hcurlo}^2
\\
&
\leq C \left\{h^{2\ell}\left(
\max_{1\leq n\leq N}
\|\un(t_n)\|_{\mathcal{X}}^2
+\|\partial_t\un\|_{\L^2(0,T;\mathcal{X})}^2\right)
+(\Delta t)^2\|\partial_{tt}\un\|_{\L^2(0,T;\ldoso^3)}^2
\right\}
\end{align*}
with $\ell:=\min\{r,1\}$.
\end{corollary}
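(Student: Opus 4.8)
The plan is to derive the estimate directly from the C\'ea-type bound of Theorem~\ref{cea-eddy} by replacing each best-approximation infimum on its right-hand side with the concrete interpolation error of the lowest-order N\'ed\'elec interpolant $I_h^{\mathcal{N}}$, controlled through \eqref{bimbo}. First I would observe that the hypothesis $\un\in\H^1(0,T;\mathcal{X}\cap\hocurlo)$ yields, via the embedding $\H^1(0,T;\mathcal{X})\hookrightarrow\mathcal{C}([0,T];\mathcal{X})$, that $\un(t_n)\in\mathcal{X}$ for each $n$; since moreover $r>1/2$, the interpolant $I_h^{\mathcal{N}}\un(t_n)\in\xho$ is well defined and
\[
\inf_{\vn\in\xho}\norm{\un(t_n)-\vn}_{\hcurlo}^2
\le\norm{\un(t_n)-I_h^{\mathcal{N}}\un(t_n)}_{\hcurlo}^2
\le C\,h^{2\ell}\norm{\un(t_n)}_{\mathcal{X}}^2,
\]
with $\ell=\min\{r,1\}$. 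Taking the maximum over $n$ handles the first term of Theorem~\ref{cea-eddy}, and, bounding the discrete-in-time sum crudely by $\Delta t\sum_{n=1}^N\norm{\un(t_n)}_{\mathcal{X}}^2\le T\max_{1\le n\le N}\norm{\un(t_n)}_{\mathcal{X}}^2$, the second term is controlled by the same quantity, up to the factor $T$ absorbed into $C$.

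For the term involving $\partial_t\un$ I would use that $\partial_t\un\in\L^2(0,T;\mathcal{X}\cap\hocurlo)$, so that for a.e.\ $t\in[0,T]$ the interpolant $I_h^{\mathcal{N}}\partial_t\un(t)$ is well defined, whence
\[
\int_0^T\inf_{\vn\in\xho}\norm{\partial_t\un(t)-\vn}_{\hcurlo}^2\dt
\le\int_0^T\norm{\partial_t\un(t)-I_h^{\mathcal{N}}\partial_t\un(t)}_{\hcurlo}^2\dt
\le C\,h^{2\ell}\norm{\partial_t\un}_{\L^2(0,T;\mathcal{X})}^2 .
\]
The last term $(\Delta t)^2\int_0^T\norm{\partial_{tt}\un(t)}_{\ldoso^3}^2\dt$ appearing in Theorem~\ref{cea-eddy} is already equal to $(\Delta t)^2\norm{\partial_{tt}\un}_{\L^2(0,T;\ldoso^3)}^2$, hence carried over unchanged. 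Inserting the three bounds into Theorem~\ref{cea-eddy} and collecting the constants gives the claimed inequality.

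I do not anticipate a real obstacle: the statement is a routine corollary of Theorem~\ref{cea-eddy}. The only points deserving care are (i) verifying that $I_h^{\mathcal{N}}$ is legitimately applicable at each $t_n$ and a.e.\ $t$ --- guaranteed precisely by the assumption that $\un$ and $\partial_t\un$ take values in $\mathcal{X}=\mathbf{H}^r(\curl,\Omega)\cap\hocurlo$ with $r>1/2$, the setting in which \eqref{bimbo} was stated --- and (ii) the fact that the right-endpoint Riemann sum $\Delta t\sum_{n=1}^N\norm{\un(t_n)}_{\mathcal{X}}^2$ is bounded here by $T\max_{1\le n\le N}\norm{\un(t_n)}_{\mathcal{X}}^2$ rather than by $\norm{\un}_{\L^2(0,T;\mathcal{X})}^2$, which is what makes the maximum appear in the final estimate.
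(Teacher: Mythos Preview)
Your proposal is correct and follows exactly the approach of the paper, which states only that the result ``is a direct consequence of Theorem~\ref{cea-eddy} and the interpolation error estimate~\eqref{bimbo}.'' You have simply spelled out the routine details of that direct consequence, and the care you note about applicability of $I_h^{\mathcal{N}}$ and the bounding of the discrete sum by $T\max_n\|\un(t_n)\|_{\mathcal{X}}^2$ is appropriate.
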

\begin{proof}
It is a direct consequence of Theorem~\ref{cea-eddy} and the interpolation error estimate \eqref{bimbo}.
\end{proof}
\begin{remark}
By testing \eqref{aux1} with $v=\dif{\sigma_h^k}$, considering $\lambda=0$ and using similar arguments of Section \ref{estimates}, 
we obtain the following result. 
\begin{theorem}\label{errordu}
Let us assume the hyphotesis of Theorem \ref{erroru}. If the Lagrange multiplier $\lambda$ of the Problem \ref{PC1} vanishes identically
and the operator $A$ is monotone on $\X$, then there exists a constan $C>0$ independent oh $h$ and $\Delta t$ satisfying
  \begin{align*}
  &\Delta t \sum_{k=1}^n\produ{R(\partial_{t}u(t_k)-\dif u_h^k),(\partial_{t}u(t_k)-\dif u_h^k)}_{\Y}\\
 & \qquad\qquad\qquad\qquad
  \leq C\left[ \|\Pi_h u_0 -u_{0,h} \|_{\Y}^2 +\max_{1 \leq n \leq N}\left( \inf_{v\in \X}{\|u(t_{n}) - v\|_{\X}^2}\right) 
+ \Delta t\,\sum_{n=1}^N \inf_{v\in\X_h}{\|u(t_n)-v\|^2_{\X}} \right.\\ 
&\qquad\qquad\qquad\qquad\qquad\:\:\:
\left.
+\int_0^T  \left(\inf_{v\in\X} \| \partial_{t} u(t)-v \|_{\X}^2\right) dt + (\Delta t)^2 \| \partial_{tt} u \|^2_{\L^2(0,T;\Y)} 
\right].
 \end{align*}
 \end{theorem}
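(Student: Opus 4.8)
The statement is the analogue of Theorem~\ref{erroru} but now estimates the ``discrete time-derivative'' error $\partial_t u(t_k)-\dif u_h^k$ in the $R$-seminorm. As the remark indicates, the natural starting point is the error identity \eqref{aux1}, valid in the current setting with $\lambda\equiv 0$ and $V_h$ the discrete kernel of $b$; since now $\sigma_h^k\in V_h$ for each $k$ and the whole construction of $\mathcal P_h$ collapses (because $b\equiv 0$ forces $\widetilde{\mathcal P}_h=0$), one may in fact work directly with $\Pi_h$, i.e.\ take $\rho_h^n:=u(t_n)-\Pi_h u(t_n)$ and $\sigma_h^n:=\Pi_h u(t_n)-u_h^n\in V_h$. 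Thus \eqref{aux1} reads
\[
\langle R\dif\sigma_h^k,v\rangle_{\Y}+\langle A\sigma_h^k,v\rangle_{\X}
=\langle R\tau^k,v\rangle_{\Y}-\langle R\dif\rho_h^k,v\rangle_{\Y}-\langle A\rho_h^k,v\rangle_{\X}
\qquad\forall v\in V_h,
\]
and the plan is to test this with $v:=\dif\sigma_h^k$ rather than with $\sigma_h^k$.

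\textbf{Key steps.} First, testing with $v=\dif\sigma_h^k$ produces on the left the term $\langle R\dif\sigma_h^k,\dif\sigma_h^k\rangle_{\Y}$, which is exactly the quantity to be controlled, plus $\langle A\sigma_h^k,\dif\sigma_h^k\rangle_{\X}$. Using that $A$ is self-adjoint and monotone on $\X$ (the new hypothesis), the latter satisfies the telescoping bound
\[
\langle A\sigma_h^k,\dif\sigma_h^k\rangle_{\X}\ge\frac{1}{2\Delta t}\left[\langle A\sigma_h^k,\sigma_h^k\rangle_{\X}-\langle A\sigma_h^{k-1},\sigma_h^{k-1}\rangle_{\X}\right],
\]
so after multiplying by $\Delta t$ and summing over $k=1,\dots,n$ the $A$-contribution telescopes into $\tfrac12\langle A\sigma_h^n,\sigma_h^n\rangle_{\X}-\tfrac12\langle A\sigma_h^0,\sigma_h^0\rangle_{\X}\ge-\tfrac12\langle A\sigma_h^0,\sigma_h^0\rangle_{\X}$. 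On the right-hand side I would bound each term by Young's inequality with weight absorbing a fraction of $\|\dif\sigma_h^k\|$ in the $R$-seminorm, using the monotone Cauchy–Schwarz inequality $|\langle Rv,w\rangle_{\Y}|^2\le\langle Rv,v\rangle_{\Y}\langle Rw,w\rangle_{\Y}$ for the $\langle R\tau^k,\dif\sigma_h^k\rangle$ and $\langle R\dif\rho_h^k,\dif\sigma_h^k\rangle$ terms, and the continuity of $A$ together with a discrete inverse-type estimate or, more simply, boundedness of $\|\dif\sigma_h^k\|_{\X}$ is not available — so instead the term $\langle A\rho_h^k,\dif\sigma_h^k\rangle_{\X}$ must be handled by summation by parts: $\Delta t\sum_k\langle A\rho_h^k,\dif\sigma_h^k\rangle_{\X}=\langle A\rho_h^n,\sigma_h^n\rangle_{\X}-\langle A\rho_h^0,\sigma_h^0\rangle_{\X}-\Delta t\sum_k\langle A\dif\rho_h^k,\sigma_h^{k-1}\rangle_{\X}$, so that only $\|\sigma_h^k\|_{\X}$ (already controlled by Theorem~\ref{erroru} / Lemma~\ref{lemmasigma}) and $\|\dif\rho_h^k\|_{\X}$, $\|\rho_h^k\|_{\X}$ appear. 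Then I would bound $\langle A\sigma_h^n,\sigma_h^n\rangle_{\X}$ and $\langle A\sigma_h^0,\sigma_h^0\rangle_{\X}$ by $\|A\|\|\sigma_h^n\|_{\X}^2$, etc., invoke the already-proven estimate of Theorem~\ref{erroru} for $\max_n\|\sigma_h^n\|_{\X}$ and $\Delta t\sum\|\sigma_h^k\|_{\X}^2$, estimate $\|\tau^k\|_{\Y}$, $\|\dif\rho_h^k\|_{\X}$, $\|\rho_h^k\|_{\X}$ via Taylor expansion and the approximation property \eqref{infimoproyector} of $\Pi_h$ exactly as in the proof of Theorem~\ref{erroru}, and finally rewrite $\sigma_h^0=(\Pi_h u_0-u_{0,h})-(\,\Pi_h u_0 - u_0\,)$ — or more precisely absorb $\|\sigma_h^0\|$-type terms into $\|\Pi_h u_0-u_{0,h}\|_{\Y}$ plus an approximation error, yielding the stated right-hand side. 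Collecting and using $\partial_t u(t_k)-\dif u_h^k=(\partial_t u(t_k)-\dif u(t_k))+\dif\rho_h^k+\dif\sigma_h^k=-\tau^k+\dif\rho_h^k+\dif\sigma_h^k$ together with the monotone Cauchy–Schwarz inequality once more gives the claim.

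\textbf{Main obstacle.} The delicate point is the term $\langle A\rho_h^k,\dif\sigma_h^k\rangle_{\X}$: since $\dif\sigma_h^k$ cannot be controlled in the $\X$-norm (only in the $R$-seminorm, which is weaker and degenerate), one cannot Young-split it directly in $\X$; the summation-by-parts trick is essential here, and it is exactly this maneuver that forces the appearance of $\max_n\inf_{v\in\X_h}\|u(t_n)-v\|_{\X}$ (from the boundary term $\langle A\rho_h^n,\sigma_h^n\rangle_{\X}$) and of $\|\Pi_h u_0-u_{0,h}\|_{\Y}$ rather than $\|u_0-u_{0,h}\|_{\Y}$ in the final bound. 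A secondary subtlety is that, unlike in Lemma~\ref{lemmasigma}, the monotonicity of $A$ on all of $\X$ (not just on $V$) is genuinely used to make the $A$-telescoping term have a favorable sign; this is why the extra hypothesis ``$A$ monotone on $\X$'' is imposed. Everything else is a routine repetition of the discrete Gronwall / Taylor-expansion arguments already carried out in the proofs of Lemma~\ref{lemmasigma} and Theorem~\ref{erroru}.
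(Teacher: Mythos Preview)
Your plan is essentially the paper's own argument: the paper merely says ``by testing \eqref{aux1} with $v=\dif\sigma_h^k$, considering $\lambda=0$ and using similar arguments of Section~\ref{estimates}'', and you have correctly unpacked what this entails --- the telescoping identity $\langle A\sigma_h^k,\dif\sigma_h^k\rangle_\X\geq \frac{1}{2\Delta t}\big[\langle A\sigma_h^k,\sigma_h^k\rangle_\X-\langle A\sigma_h^{k-1},\sigma_h^{k-1}\rangle_\X\big]$ coming from the monotonicity of $A$, the summation-by-parts treatment of $\langle A\rho_h^k,\dif\sigma_h^k\rangle_\X$ (which is precisely why $\max_n\inf_{v}\|u(t_n)-v\|_\X$ appears), and the final decomposition $\partial_t u(t_k)-\dif u_h^k=-\tau^k+\dif\rho_h^k+\dif\sigma_h^k$.

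One small slip: your parenthetical ``because $b\equiv 0$ forces $\widetilde{\mathcal P}_h=0$'' is not right --- the hypothesis is that the multiplier $\lambda$ vanishes, not the bilinear form $b$. In general $\Pi_h u(t_k)-u_h^k$ need \emph{not} lie in $V_h$, so you cannot legitimately test the error identity with its discrete time-derivative. You should keep the splitting via $\mathcal P_h$ exactly as in Lemma~\ref{lemmasigma}; the paper's remark explicitly says to start from \eqref{aux1}, which is written for $\sigma_h^k=\mathcal P_h u(t_k)-u_h^k\in V_h$. The appearance of $\Pi_h u_0$ rather than $\mathcal P_h u_0$ in the statement is either a typo or harmless in the intended application (where $u_0=\cero$), but it should not be taken as license to replace $\mathcal P_h$ by $\Pi_h$ throughout the argument.
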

Finally, by applying Theorem \ref{errordu} to the eddy currents problem and using the interpolation error estimate \eqref{bimbo}, 
 we deduce the quasi-optimal convergence for the time derivative approximation.
 \begin{corollary}\label{coroconvucd2}
Let $\un$ be the solution of the Problem \ref{mix1-int}. If $\un\in \H^1(0,T;\mathcal{X}\cap\hocurlo)\cap \H^2(0,T;\ldoso^3)$, 
there exists a constant $C>0$ independent of $h$ and $\Delta t$, such that
\begin{multline*}
\Delta t \sum_{k=1}^N\|\partial_t\un(t_k)-\dif \un_h^k\|_{\ldosoc^3}\\
\leq C \left\{h^{2\ell}\left(
\max_{1\leq n\leq N}
\|\un(t_n)\|_{\mathcal{X}}^2
+\|\partial_t\un\|_{\L^2(0,T;\mathcal{X})}^2\right)
+(\Delta t)^2\|\partial_{tt}\un\|_{\L^2(0,T;\ldoso^3)}^2
\right\}
\end{multline*}
with $\ell:=\min\{r,1\}$.
\end{corollary}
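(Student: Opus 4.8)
The final statement is Corollary~\ref{coroconvucd2}, which combines Theorem~\ref{errordu} (the abstract estimate for the time-derivative approximation in the $R$-norm) with the N\'ed\'elec interpolation error bound~\eqref{bimbo}, applied to the internal conductor model.

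\medskip

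\textbf{Proof proposal.} The plan is to verify that the internal conductor model (Problem~\ref{mix1-int}) satisfies all the hypotheses of Theorem~\ref{errordu}, then read off the conclusion with the abstract infima replaced by concrete interpolation errors. First, I would recall that for this model the Lagrange multiplier vanishes identically ($\lambda\equiv 0$ and $\lambda_h^n\equiv 0$), which is exactly one of the structural hypotheses of Theorem~\ref{errordu}; this was already observed in the discussion preceding Theorem~\ref{cea-eddy}. Next, I must check that the operator $A$ induced by the bilinear form $\vn\mapsto\int_\Omega \frac1\mu\curl\un\cdot\curl\vn$ is monotone on all of $\X=\hocurlo$: this is immediate since $\int_\Omega\frac1\mu|\curl\vn|^2\ge 0$ for $\mu>0$ bounded below, so $\langle A\vn,\vn\rangle_\X\ge 0$. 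The operator $R$ here is $\vn\mapsto\int_{\oc}\sigma\vn\cdot(\,\cdot\,)$, which is self-adjoint and monotone, and $\langle R\wn,\wn\rangle_\Y=\|\wn\|_{\sigma,\oc}^2$, so the left-hand side of the abstract estimate becomes precisely $\Delta t\sum_{k=1}^N\|\partial_t\un(t_k)-\dif\un_h^k\|_{\sigma,\oc}^2$, which (up to the material constants $\sigma$ being bounded above and below) is equivalent to $\Delta t\sum_{k=1}^N\|\partial_t\un(t_k)-\dif\un_h^k\|_{\ldosoc^3}^2$ appearing in the corollary. The remaining regularity hypothesis of Theorem~\ref{errordu} — namely $\un\in\H^1(0,T;\X)\cap\H^2(0,T;\Y)$ — is guaranteed by the assumed regularity $\un\in\H^1(0,T;\mathcal X\cap\hocurlo)\cap\H^2(0,T;\ldoso^3)$, since $\mathcal X\cap\hocurlo\hookrightarrow\hocurlo=\X$ and $\ldoso^3\hookrightarrow\ldosoc^3=\Y$.

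\medskip

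With the hypotheses in place, I would invoke Theorem~\ref{errordu} directly. The right-hand side of that theorem contains four terms: $\|\Pi_h\un_0-\un_{0,h}\|_\Y^2$, the two infima $\max_n\inf_{\vn\in\X}\|\un(t_n)-\vn\|_\X^2$ and $\Delta t\sum_n\inf_{\vn\in\X_h}\|\un(t_n)-\vn\|_\X^2$ together with $\int_0^T\inf_{\vn\in\X}\|\partial_t\un(t)-\vn\|_\X^2\,dt$, and the time term $(\Delta t)^2\|\partial_{tt}\un\|_{\L^2(0,T;\Y)}^2$. Since $\un_0=\cero$ for this model and we choose $\un_{0,h}=\cero$ (consistent with $\un_h^0=0$ in Problem~\ref{mix1-dis}), the first term drops out; likewise $\Pi_h\un_0=\cero$. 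The infimum over the full space $\X$ is zero (take $\vn=\un(t_n)$, resp. $\vn=\partial_t\un(t)$), so only the discrete infimum $\Delta t\sum_n\inf_{\vn\in\X_h}\|\un(t_n)-\vn\|_\X^2$ and possibly $\int_0^T\inf_{\vn\in\X_h}\|\partial_t\un(t)-\vn\|_\X^2\,dt$ survive — here I should double-check whether the statement of Theorem~\ref{errordu} intends the $\partial_t u$ infimum over $\X$ or $\X_h$; if over $\X$ it vanishes, and if over $\X_h$ it is handled just like the other discrete infimum. Then I bound each surviving discrete infimum by choosing $\vn=I_h^{\mathcal N}\un(t_n)$ (resp.\ $I_h^{\mathcal N}\partial_t\un(t)$), which is legitimate because $\un(t)\in\mathcal X$ for a.e.\ $t$, and apply~\eqref{bimbo} to get $\inf_{\vn\in\X_h}\|\un(t_n)-\vn\|_{\hcurlo}^2\le C h^{2\ell}\|\un(t_n)\|_{\mathcal X}^2$ with $\ell=\min\{r,1\}$, and similarly $\int_0^T\inf_{\vn\in\X_h}\|\partial_t\un(t)-\vn\|_{\hcurlo}^2\,dt\le C h^{2\ell}\|\partial_t\un\|_{\L^2(0,T;\mathcal X)}^2$; note $\Delta t\sum_n\|\un(t_n)\|_{\mathcal X}^2\le T\max_n\|\un(t_n)\|_{\mathcal X}^2$ absorbs the time-stepping sum into the stated $\max$-norm. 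Collecting these bounds yields exactly the right-hand side $C\{h^{2\ell}(\max_n\|\un(t_n)\|_{\mathcal X}^2+\|\partial_t\un\|_{\L^2(0,T;\mathcal X)}^2)+(\Delta t)^2\|\partial_{tt}\un\|_{\L^2(0,T;\ldoso^3)}^2\}$, which completes the proof.

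\medskip

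\textbf{Main obstacle.} The real content is entirely in Theorem~\ref{errordu} itself (whose proof is sketched in the preceding Remark by testing~\eqref{aux1} with $v=\dif\sigma_h^k$), so the corollary's proof is a verification-and-substitution exercise; the only mild subtlety I anticipate is the bookkeeping around which terms of the generic right-hand side actually vanish for this model — in particular confirming $\un_0=\cero$, $\un_{0,h}=\cero$ and hence $\Pi_h\un_0-\un_{0,h}=\cero$, and correctly matching the $\|\cdot\|_{\sigma,\oc}$ norm on the left with the $\ldosoc^3$ norm in the statement via the uniform bounds $0<\sigma_{\min}\le\sigma\le\sigma_{\max}$ on the conductivity. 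None of this is deep, but it is where a careless write-up could go wrong.
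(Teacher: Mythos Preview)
Your proposal is correct and follows exactly the same route as the paper, which simply states that the corollary follows by applying Theorem~\ref{errordu} to the eddy current problem and using the interpolation estimate~\eqref{bimbo}. Your write-up is considerably more detailed than the paper's one-line justification, and your observation about the infima in Theorem~\ref{errordu} being stated over $\X$ rather than $\X_h$ (almost certainly a typo, by comparison with Theorem~\ref{erroru}) is well handled either way.
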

 \end{remark}
{
\begin{remark}\label{variables}
At each time step $t=t_k$, we can approximate the eddy currents
$\sigma\bE(\xn,t_k)$ and the magnetic field $\bH(\xn,t_k)$ by means of  $\sigma\bE_h^k=\sigma\bar\partial\un_h^k$  and
 $\mu\bH_h^k=\curl \un_h^k-\mu\bH_0$, respectively. Then the Corollary~\ref{coroconvucd2} and the Theorem \ref{erroru}
yields the following error estimates
\[
\Delta t\sum_{k=1}^N\norm{\sigma\bE(t_k)-\sigma\bE_h^k}_{0,\oc}^2
\leq C\left[h^{2l} + (\Delta t)^2\right],
\]
\[
\Delta t\sum_{k=1}^N\norm{\mu\bH(t_k)-\mu\bH_h^k}_{0,\Omega}^2
\leq C\left[h^{2l} + (\Delta t)^2\right].
\]
\end{remark}
}
\textbf{Numerical results.}
 
Now, we will present some numerical results obtained with a MATLAB code which implements the numerical method described above. 
First, we solve a test problem with a known analytical solution. 
Next, we describe a problem with cylindrical symmetry and compare the results with those obtained with an axisymmetric code.

{Test 1: \textit{A test with known analytical solution}}

{
Let us consider $\Omega:=[0,3]^3$, $\Omega_c:=[1,2]^3$ and $T=10$. The right hand is chosen
so that 
\[
\un(x_1,x_2,x_3,t)=sin(\pi t)\begin{bmatrix}{x_1^2 x_2 x_3(x_1-3)^2(2x_2-3)(x_2-3)(x_3-3)}\\{-x_1 x_2^2 x_3(x_1-3)(2x_1-3)(x_2-3)^2(x_3-3)}\\{0}\end{bmatrix}
\]
is solution of Problem \ref{mix1-int}. Furthermore, we have assumed without loss of generality 
that
$\mu=\sigma=1$.
The numerical method has been applied with several successively refined meshes and time-steps. The computed solutions
have been compared with the analytical one, by calculating the relative percentage error in time-discrete norms from 
Remark \ref{variables}. More exactly, we have computed 
the relative percentage error for the physical variables of interest, namely 
\begin{equation*}
100\frac{\Delta t\sum_{k=1}^N\norm{\bH(t_k)-\bH_h^k}_{0,\Omega}^2}{\Delta t\sum_{k=1}^N\norm{\bH(t_k)}_{0,\Omega}^2}\qquad 
100\frac{\Delta t\sum_{k=1}^N\norm{\bE(t_k)-\bE_h^k}_{0,\oc}^2}{\Delta t\sum_{k=1}^N\norm{\bE(t_k)}_{0,\oc}^2}
\end{equation*}  
which are time-discrete forms of the errors in $\L^2(0,T;\L^2(\Omega))$ and  $\L^2(0,T;\L^2(\oc))$ norms, respectively.
\\
To show the linear convergence with respect to the mesh-size and the time step, we have computed 
the relative percentage error for the physical variables to $\frac{h}{n}$, $\frac{\Delta t}{n}$, $n=2,\cdots,7$. 
Figure  \ref{ErroresRel_HE} shows log-log plots the magnetic field
and electric field in the conductor domain in the discrete norms considered  above versus
the number of degrees of freedom (d.o.f).
}
\begin{figure}[ht!]
	\begin{center}
		\includegraphics*[width=7cm]{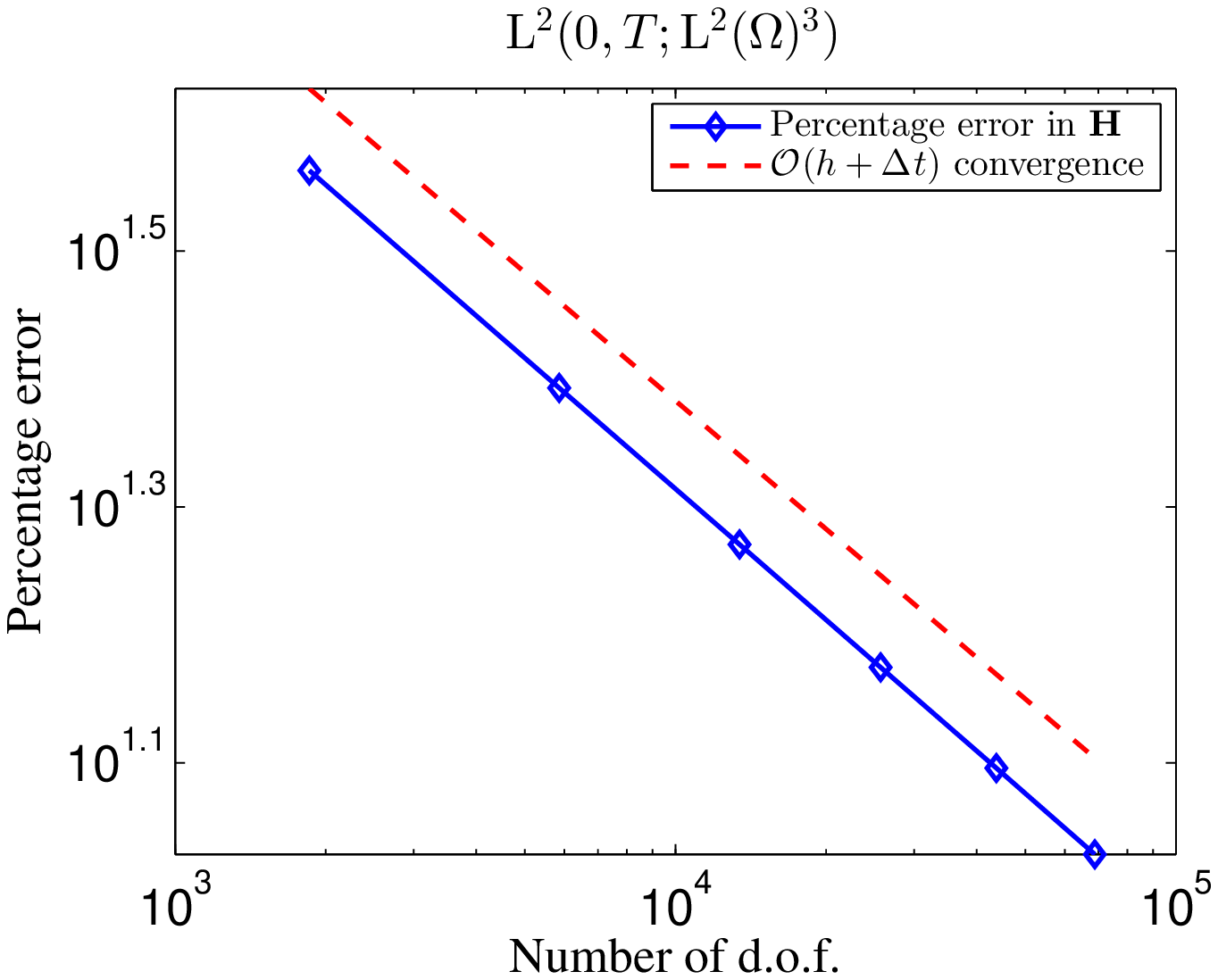}\includegraphics*[width=7cm]{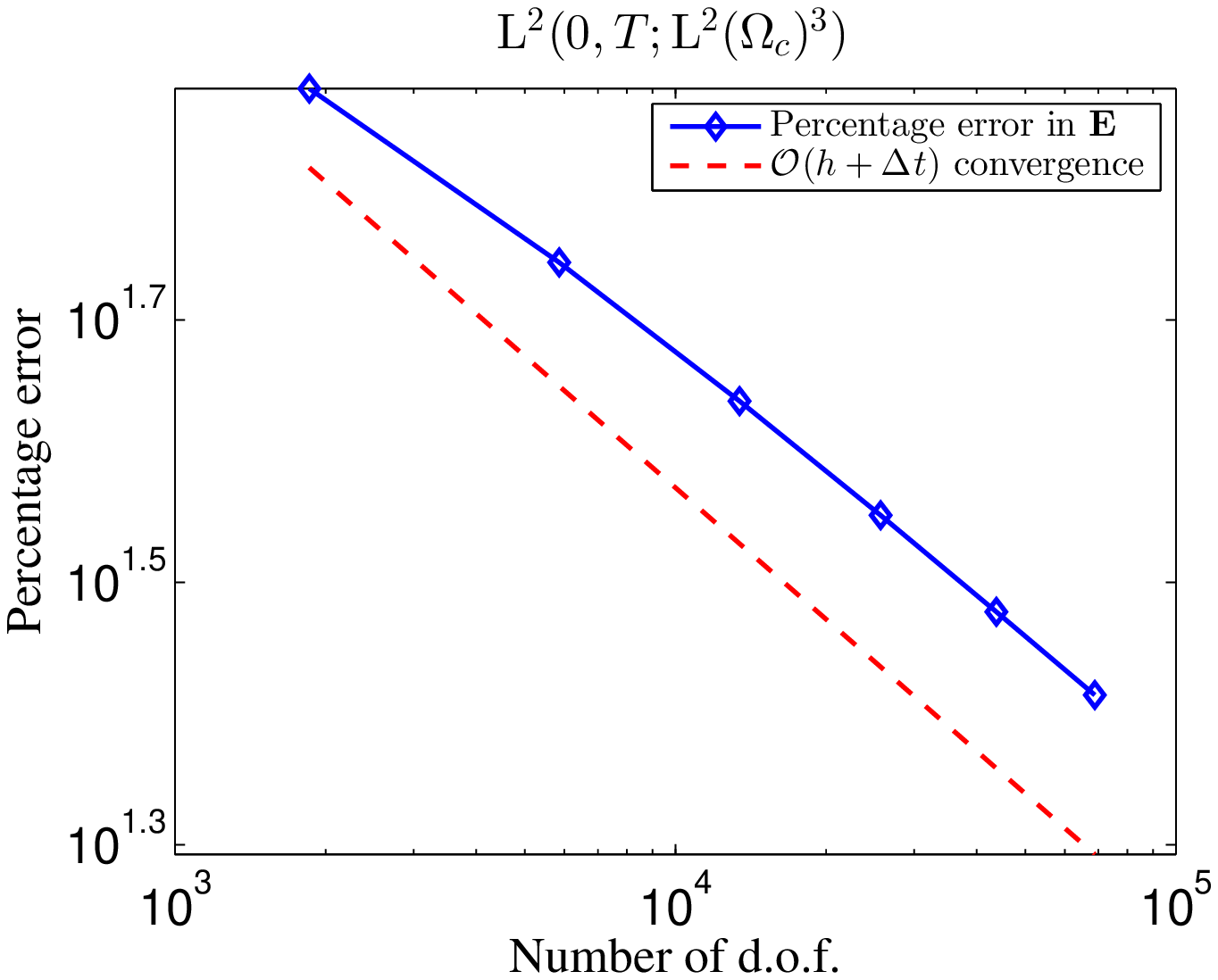}
		\caption{Percentage discretization error curves for $\bH$ (left) and $\bE$ (right)	versus number of d.o.f. (log-log scale).}
		\label{ErroresRel_HE}
	\end{center}
\end{figure}

{Test 2: \textit{A comparison with axisymmetric problem.}} 

\begin{figure}[!ht]
	\begin{center}
		\begin{minipage}{6.cm}
			\includegraphics*[width=5cm]{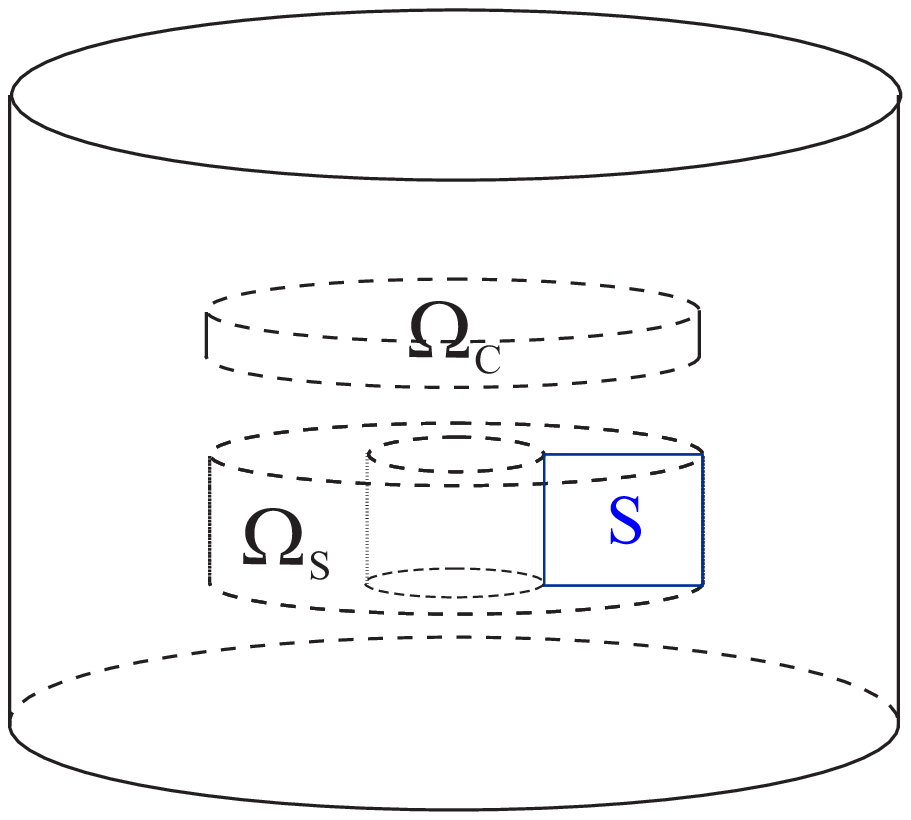}
		\end{minipage}
		\hspace*{1.5cm}
		\begin{minipage}{6.cm}
			\includegraphics*[width=6cm]{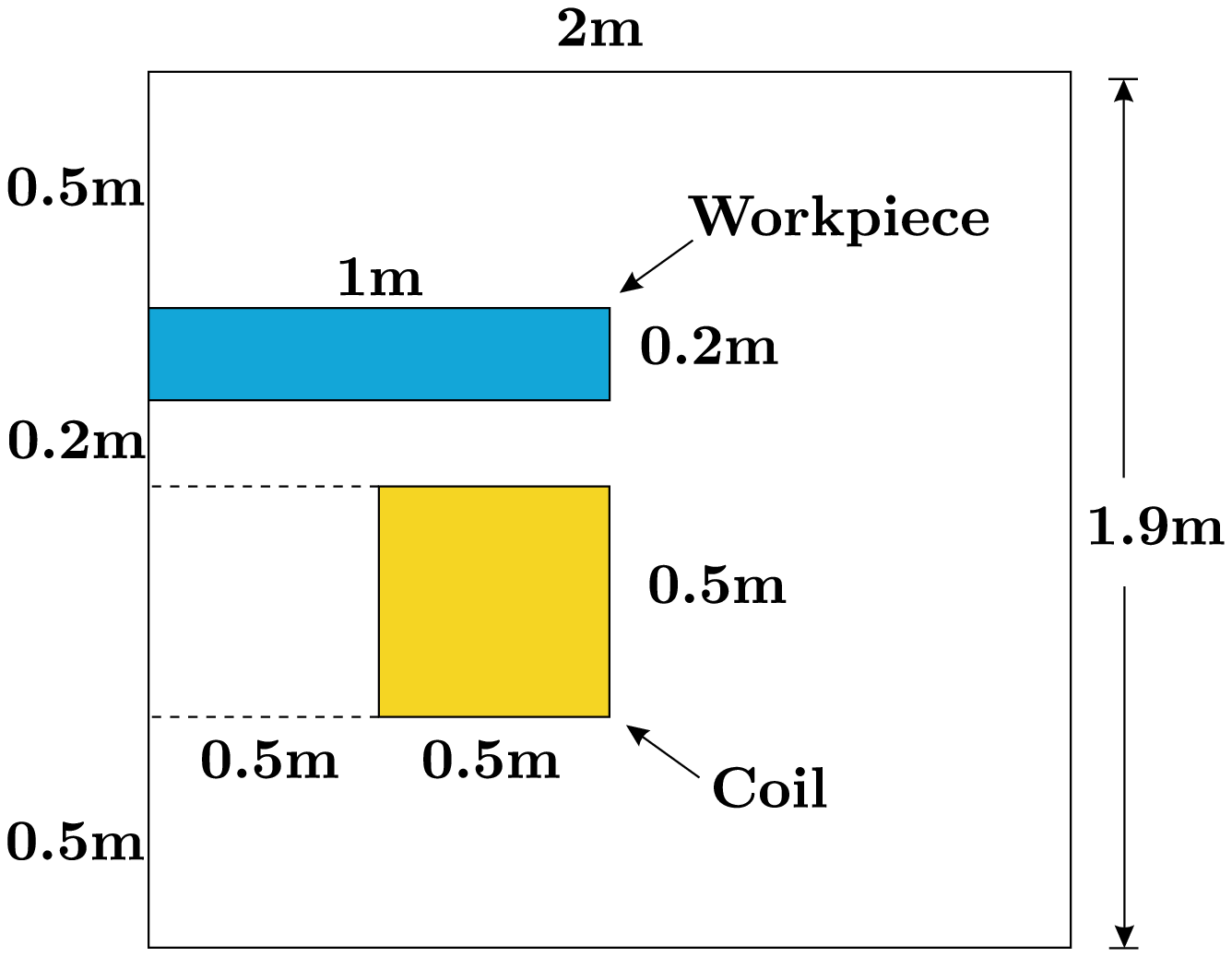}
		\end{minipage}
	\end{center}
	\caption{Sketch of the domain $\Omega$ (left) and its meridian section (right).}
	\label{Ejm3Axis}
\end{figure}

We consider the geometry sketched in Figure~\ref{Ejm3Axis}, which corresponds to a typical EMF (electromagnetic forming) setting. Thus $\oc$ is the cylinder of radius 
$\unit[R_{\rm P}=1]{m}$ and its $z$-coordinate varies between $(1.2,1.4)$.  We assume that $\bJ$ is supported in $\OS$ where $\OS\subset\Omega$, $\OS$ is a toroidal core of rectangular cross section $S$, with inner radius equal to $\unit[R_{\rm s}=0.5]{m}$, outer radius $\unit[R_{\rm S}=1]{m}$ and height $\unit[A_{\rm s}=0.5]{m}$. The source current density is supported in $\OS$ and given by
\begin{equation*}
\bJ(t,\bx)=\frac{I(t)}{\mbox{meas}(S)}
\begin{pmatrix}
-\frac{x_2}{\sqrt{x_1^2+x_2^2}}
\\
\frac{x_1}{\sqrt{x_1^2+x_2^2}}
\\
0
\end{pmatrix}\quad\mbox{in }\OS ,
\end{equation*}
where the current intensity $I(t)$ is shown in Figure~\ref{intensityinputcap4}. Note that, since the source current density field has only azimuthal non-zero component, the solution will be axisymmetric. In particular, we can solve the problem in the meridional section depicted in Figure~\ref{Ejm3Axis} (right). In this case, there is no analytical solution, 
so we will asses the behavior of the method by comparing the computed results with those obtained with an axisymmetric code on the very fine mesh shown in Figure~\ref{mesh} (right) 
which will be taken as `exact' solution.
\begin{figure}[ht]
	\begin{center}
		\includegraphics*[width=6cm]{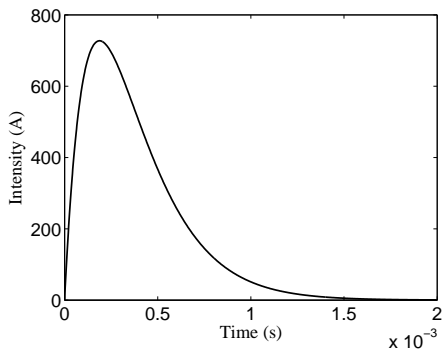}
		\caption{Source current intensity (A) vs. time (s).}
		\label{intensityinputcap4}
	\end{center}
\end{figure}

The axisymmetric problem has been solved by using a scalar formulation written in terms of the azimuthal component of a magnetic vector potential $A_{\theta}$. The corresponding weak formulation, although with boundary conditions different from those of our case, has been analyzed in \cite{BMRRS_SIAM}, \cite{BMRRS_ACM} with moving domains. In particular, the method was proved to converge with optimal order error estimates in terms of $h$ and $\Delta t$ under appropriate assumptions. 
\begin{figure}[!ht]
	\begin{center}
		\begin{minipage}{6.cm}
			\includegraphics*[width=4cm]{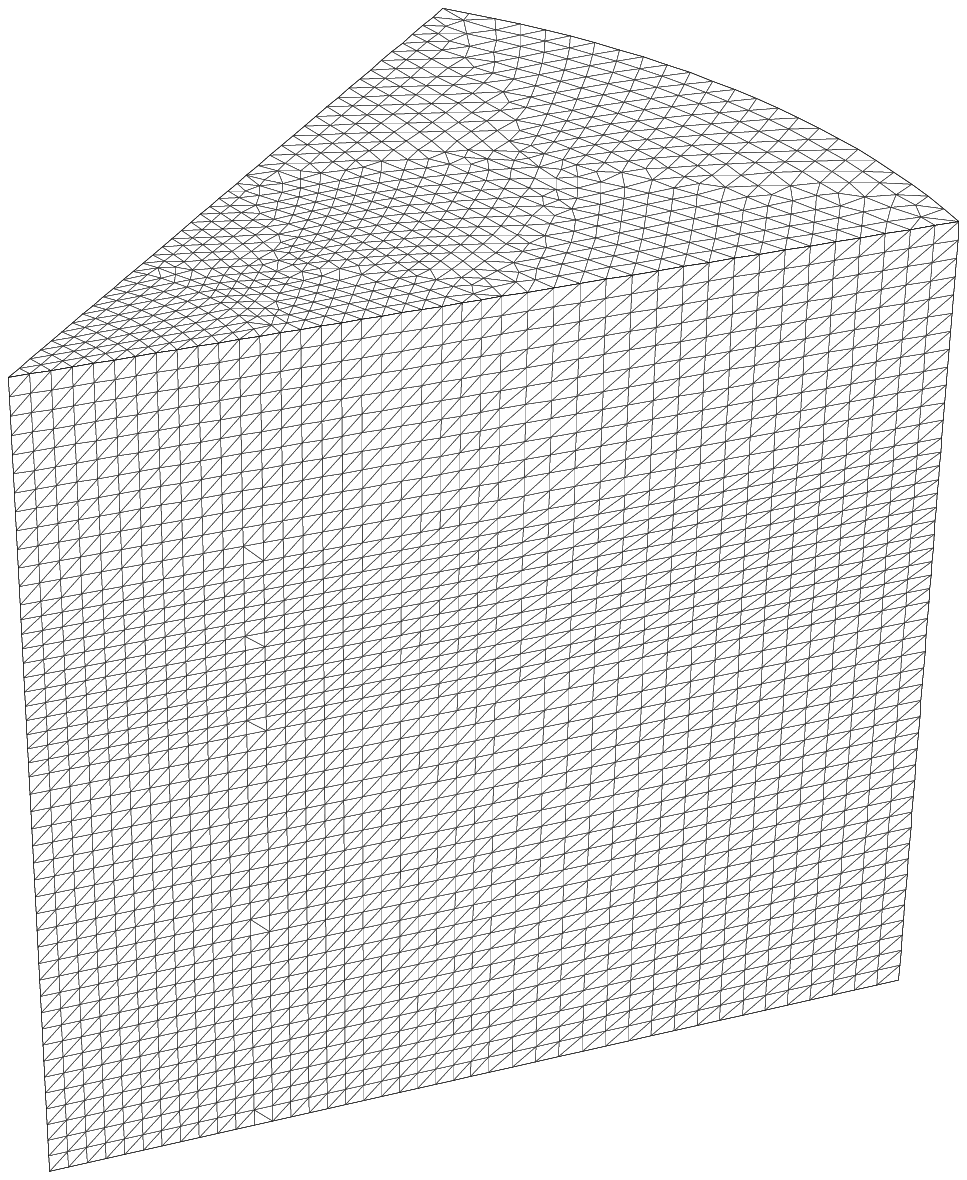}
		\end{minipage}
		\begin{minipage}{6.cm}
			\includegraphics*[width=4cm]{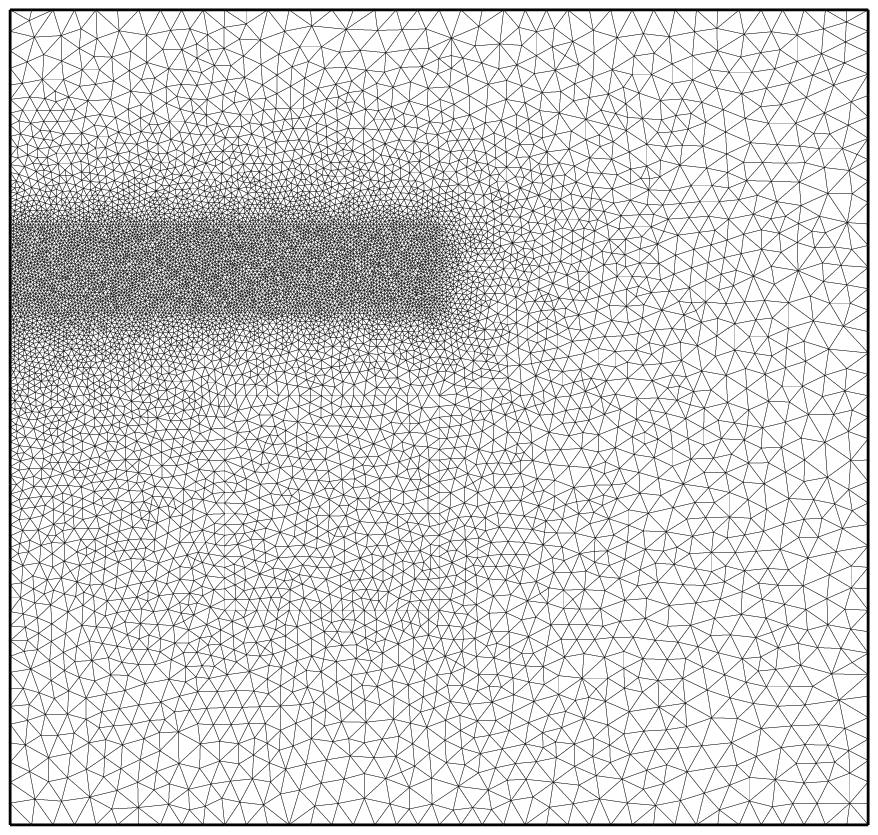}
		\end{minipage}
	\end{center}
	\caption{Coarsest mesh used for the 3D code (left) and mesh used by the axisymmetric code (right).}
	\label{mesh}
\end{figure}

Figure~\ref{mesh} (right) shows the mesh used in the axisymmetric code. Concerning the 3D mesh, we have exploited the symmetry of the problem and solved it in $1/8$ of the whole domain to reduce the number of degrees of freedom. The used mesh is shown in Figure~\ref{mesh} (left).

We have solved the problem with several successively refined meshes and a time-step conveniently reduced to analyze the convergence with respect to both, the mesh-size and the time-step simultaneously. Figure~\ref{ErroresAxi} shows a log-log plot of the relative error for the electric field, versus the number of degrees of freedom (d.o.f.). The curve shows that the obtained results converge to the `exact' ones as $h$ and $\Delta t$ go to zero.
\begin{figure}[ht!]
	\begin{center}
		\includegraphics*[width=7cm]{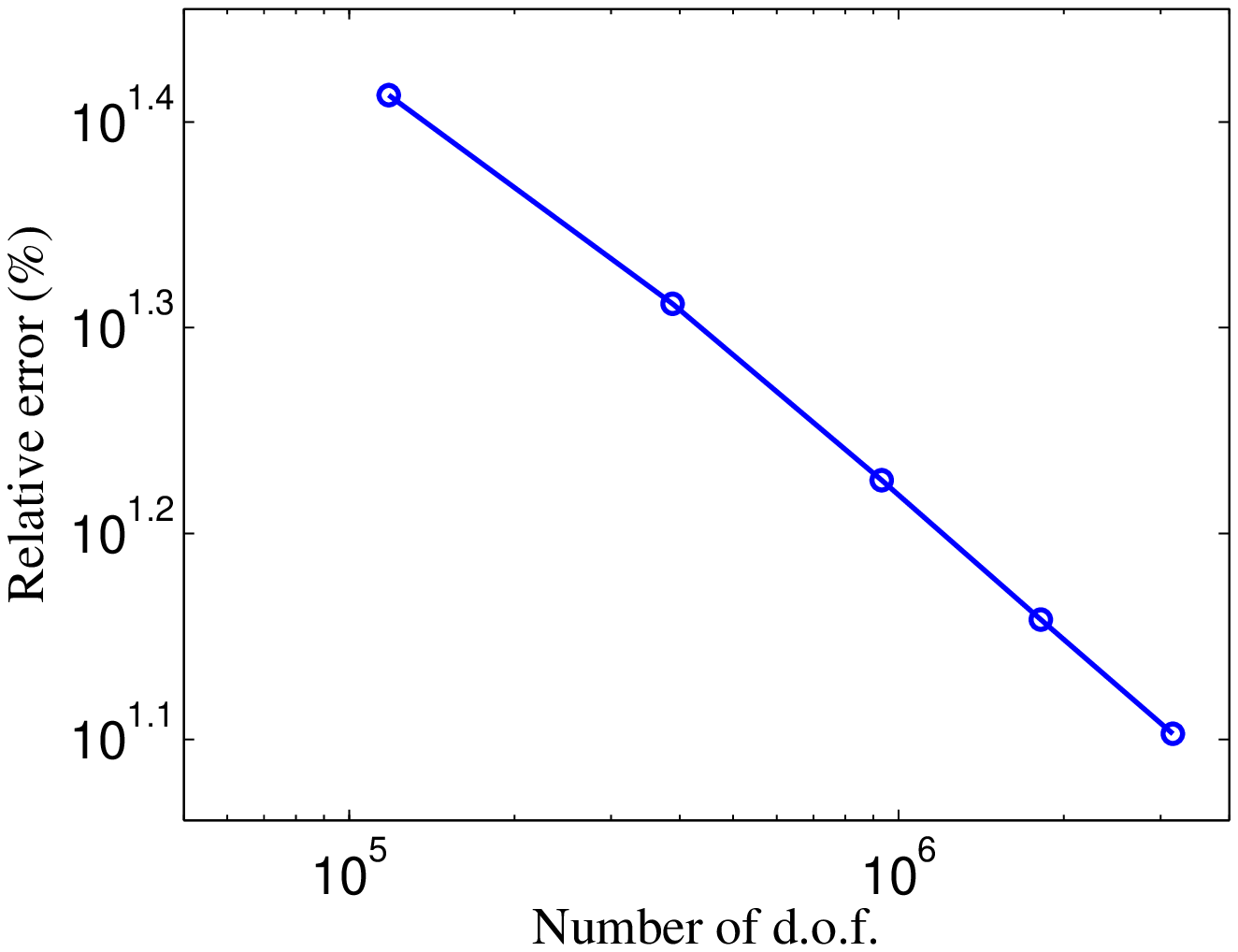}
		\caption{$100\times\frac{\underset{1\leq k\leq M}{\max}
				\|\bE(t_k)-\bE^k_h\|_{\L^2(\oc)^3}} {\underset{1\leq k\leq M}{\max}
				\|\bE(t_k)\|_{\L^2(\oc)^3}}$
			versus number of d.o.f. (log-log scale).}
		\label{ErroresAxi}
	\end{center}
\end{figure}
 
\subsubsection{Input current model}
{As another application of our theoretical framework, we present the fully-discrete analysis for the input current model proposed  in \cite{blrs:13}.
 }To this aim, we introduce the following Hilbert spaces
\begin{align}\label{defXMI}
\X&:=\set{\wn\in\hcurlo:\ \wn\times\nn=\cero\textrm{ on }\Gc,\ \curl\wn\cdot\nn=\cero\textrm{ on }\partial\Omega},\\
\M&:=\set{\varphi\in\hunod:\ \varphi\vert_{\GI^1}=0;\ \varphi\vert_{\GI^k}=C_{I},\ k=2,\ldots,M_{I}}.
\end{align}
with their usual norms in $\hcurlo$ and $\H^1(\od)$, respectively. 

Let $g\in\L^2(0,T;\L^2(\Gamma))$, $I_n\in\H^2(0,T),\,\,n=1,\cdots,N$ and $\Hn_0\in\hcurlo$ the initial magnetic field. We denote 
\begin{equation*}
\left\langle \fn(t),\vn\right\rangle=\int_{\Omega}\fn(t)\cdot\vn :=\sum_{n=1}^N L_n(\vn)(I_n(t)-I_n(0))
+\int_{\Omega}\curl \Hn_0\cdot\vn
\quad \forall \vn\in \X,
\end{equation*}
for any $t\in[0,T]$.
The variational formulation for the  input current model  (see \cite{blrs:13}) is :
\begin{problem}\label{mix1-puertos}
 Find $\un\in\L^2(0,T;\X)$ and $\lambda\in\H^1(0,T;\M)$ such that 
\begin{align*}
&\dfrac{d}{dt}\left[\int_{\oc}\sigma\un(t)\cdot\vn + \int_{\od}\varepsilon\vn\cdot\nabla\lambda(t)\right]
+ \int_{\Omega}\frac{1}{\mu}\curl\un(t)\cdot\curl\vn=\langle \fn(t),\vn\rangle
&& \forall \vn\in \X,
 \\
& \int_{\od}\varepsilon\un(t)\cdot\nabla\mu = \int_{\Gamma_d}\left(\int_0^tg(s)ds\right)\mu&&\forall \mu\in\M,
\\
&\un(\cdot,0)=\cero\quad \textrm{in }\oc \quad \textrm{and } \quad\lambda(0)=0\quad \textrm{in }\od,
\end{align*}
\end{problem} 
where we have introduced the time primitive 
\begin{align*}
\lambda(\xn,t)=\int_0^t\xi(\xn,s)ds\quad \xn\in \od\,,\quad t\in [0,T].
\end{align*}
of the original lagrange multiplier $\xi$ of the model in \cite{blrs:13}.
As before, to get the fully-discrete approximation for Problem \ref{mix1-puertos}, it is necessary to employ $X_h$ and $M_h$, 
finite-dimensional subspaces of $\X$ and $\M$, respectively. Thus, we define the following spaces
\begin{align*}
&\X_h:=\left\{\wn\in \mathcal{N}_h(\Omega):\,\, \wn\times\nn=\cero\,\,\text{on}\,\,\Gc\,\,\text{and}\,\,\curl\wn\cdot\nn=0\,\,\text{on}\,\,\partial{\od}\right\},\\
&\M_h:=\left\{\mu\in\mathcal{L}(\od):\,\,\mu|_{\GI^1}=0,\,\,\mu|_{\GI^k}=C_{k},\,\,\, k=2,\cdots,M \right\},
\end{align*}
where $\mathcal{N}_h(\Omega)$ and $\mathcal{L}(\od)$ are N{\'e}d{\'e}lec (see \eqref{defXh}) and Lagrange finite element spaces, respectively.
\begin{problem}\label{mix1-puertos2}
 Find $\un_h^n\in\X_h$ and $\lambda_h^n\in\M_h$ for $n=1,\cdots,N$  such that 
\begin{align*}
&\left[\int_{\oc}\sigma\frac{\un_h^n-\un_h^{n-1}}{\Delta t}\cdot\vn + \int_{\od}\varepsilon\vn\cdot\frac{\nabla\lambda_h^n-\nabla\lambda_h^{n-1}}{\Delta t}\right]
+ \int_{\Omega}\frac{1}{\mu}\curl\un_h^n\cdot\curl\vn =\int_{\Omega}\fn(t_n)\cdot\vn\quad \forall \vn\in\X_h, \\
 \\
 & \int_{\od}\varepsilon\un_h^n\cdot\nabla\mu= \int_{\Gamma_d}\left(\int_0^{t_n} g(s)ds\right)\mu\quad \forall \mu\in\M_h,
\\
&\un_h^0=\cero\quad \textrm{in }\Omega\quad \textrm{and }\quad  \lambda_h^0=0\quad \textrm{in }\od.
\end{align*}
\end{problem}
Next, we deduce the existence and uniqueness of the  fully-discrete solution  
of Problem \ref{mix1-puertos2}. So, we will prove that the hypotheses H7-H8 hold. Consequently, we define the discrete kernel of $b$ given by
\begin{align*}
\V_h=\left\{\vn\in\X_h:\,b(\vn,\mu)=0\,\,\forall\mu\in\M_h \right\}.
\end{align*}
The proof of discrete \textit{inf-sup} condition H7 is similar to the deduction of its continuous version (see \cite[Theorem 3.2]{AGL}). Then,  we only show the proof of H8. To this end, we need to deduce a discrete version of  \cite[Proposition 7.4]{FG}.
To do that,  we introduce the following  notation:
\begin{align*}
\HH&:=\HGIcurlood\cap\HGddivood,\\
\HHGIcurlod&
:=\set{\wn\in\HGIcurlod:\ \curl\wn\cdot\nn=0 \textrm{ on }\Gd},\\
 \HHdivod & :=\set{\wn\in\hdivod:\ \wn\cdot\nn\vert_{\Gd}\in\LdosGd},\\
\mathcal{N}(\od)&:=\left\{\vn|_{\od}: \vn\in \mathcal{N}(\Omega)\right\},\\
\mathcal{N}(\oc)&:=\left\{\vn|_{\oc}: \vn\in \mathcal{N}(\Omega)\right\},\\
\V_{h,\mathrm D}&:=\left\{\wn\in \HHGIcurlod\cap\HH \cap \mathcal{N}(\od):\,\, b(\wn,\varphi)=0\quad\forall\varphi\in\M_{h}\right\}.
\end{align*}
\begin{lemma}\label{poncairediscreto}
 There exist a constant $C>0$ independent of $h$ such that
\[
\|\vn\|_{0,\od}\leq C\|\curl\vn\|_{0,\od}\quad \forall\vn\in \V_{h,\mathrm{D}}.
\]
\end{lemma}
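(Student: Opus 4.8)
The plan is to argue by contradiction and to reduce the discrete estimate, through a discrete compactness argument, to the structure of the continuous cohomology space $\HH$, whose finite-dimensionality and the associated Friedrichs-type inequality are precisely what \cite[Proposition~7.4]{FG} provides. Throughout I understand the intersection with $\HH$ in the definition of $\V_{h,\mathrm D}$ as $\ldosod^3$-orthogonality to $\HH$, since otherwise every element of $\V_{h,\mathrm D}$ would be curl-free and the statement vacuous. Assume the inequality fails uniformly in $h$. Then there are $h_j\to0$ and $\vn_j\in\V_{h_j,\mathrm D}$ with $\norm{\vn_j}_{0,\od}=1$ and $\norm{\curl\vn_j}_{0,\od}\to0$, so that $\{\vn_j\}_j$ is bounded in $\hcurlod$. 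Invoking the discrete compactness property of the lowest-order N\'ed\'elec space on the insulator mesh (the same ingredient behind the uniform norm equivalence \cite[Theorem~4.7]{hiptmair} used in Lemma~\ref{lemmaGIC}, which may be derived from an $h$-uniformly bounded commuting quasi-interpolation operator onto $\mathcal{N}(\od)$ compatible with the vanishing tangential trace on $\GI$), we extract a subsequence, still written $\vn_j$, with $\vn_j\to\vn$ strongly in $\ldosod^3$ and $\curl\vn_j\rightharpoonup\curl\vn$ in $\ldosod^3$; since $\curl\vn_j\to\cero$, this forces $\curl\vn=\cero$.

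Next I identify $\vn$ as an element of $\HH$. From $\vn_j\rightharpoonup\vn$ in $\hcurlod$ the tangential trace condition $\vn\times\nn=\cero$ on $\GI$ is preserved in the limit, and $\curl\vn\cdot\nn=0$ on $\Gd$ holds trivially because $\curl\vn=\cero$; hence $\vn\in\HGIcurlood$. For the divergence constraint I fix $\varphi\in\M$ and use the conformity $\M_h\subset\M$ and the density of $\bigcup_h\M_h$ in $\M$ to pick $\varphi_{h_j}\in\M_{h_j}$ with $\varphi_{h_j}\to\varphi$ in $\H^1(\od)$; passing to the limit in $0=b(\vn_j,\varphi_{h_j})=\int_{\od}\varepsilon\vn_j\cdot\nabla\varphi_{h_j}$ gives $b(\vn,\varphi)=0$ for all $\varphi\in\M$, which means $\dive(\varepsilon\vn)=0$ in $\od$ together with $\varepsilon\vn\cdot\nn=0$ on $\Gd$, i.e. $\vn\in\HGddivood$, and therefore $\vn\in\HGIcurlood\cap\HGddivood=\HH$. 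Finally, the $\ldosod^3$-orthogonality of each $\vn_j$ to the fixed, finite-dimensional space $\HH$ passes to the strong limit, so $\vn\perp\HH$ in $\ldosod^3$.

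Combining these facts, $\vn\in\HH$ and $\vn\perp\HH$, hence $\vn=\cero$, contradicting $\norm{\vn}_{0,\od}=\lim_j\norm{\vn_j}_{0,\od}=1$; this proves the inequality. (A constructive alternative avoiding the contradiction would be to lift $\curl\vn_h\in RT_h(\od)$, which is divergence-free and compatible, to a continuous potential $\zn$ with $\curl\zn=\curl\vn_h$ and $\norm{\zn}_{0,\od}\le C\norm{\curl\vn_h}_{0,\od}$ via \cite[Proposition~7.4]{FG}, to interpolate $\zn$ back with a commuting N\'ed\'elec operator so that $\vn_h-\mathcal{I}_h^{\mathcal N}\zn$ is curl-free, and to control that curl-free remainder by combining the orthogonality to $\HH$ with the constraint $b(\vn_h,\varphi_h)=0$ tested against the Lagrange potential of the remainder, which lies in $\M_h$ precisely because $\M_h$ admits arbitrary constants on each $\GI^k$.) I expect the genuine obstacle to be the $h$-uniform discrete compactness itself, that is, the construction of the commuting, uniformly bounded interpolation operator respecting the tangential boundary condition on $\GI$ and the topology of $\od$ carried by the interface components $\GI^k$; once this is available, transferring the boundary, divergence and cohomology constraints to the limit is routine, resting only on $\M_h\subset\M$ and density.
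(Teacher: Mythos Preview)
Your compactness-by-contradiction argument is a genuinely different route from the paper's. The paper proves the lemma constructively, essentially along the lines of the ``constructive alternative'' you sketch in parentheses. For $\vn\in\V_{h,\mathrm D}$ it applies the $\ldosod^3$ orthogonal decomposition of \cite[Proposition~6.4]{FG} to write $\vn=\curl\mathbf{Q}+\nabla\chi+\mathbf{k}$ with $\mathbf{k}\in\HH$ and $\chi\in\hunodGI$, sets $\un:=\curl\mathbf{Q}$ so that $\curl\un=\curl\vn$, observes that $\un\in\HGIcurlod\cap\HGddivood$ embeds into $\H^s(\od)^3$ for some $s>1/2$, and then uses \cite[Proposition~7.4]{FG} together with $\un\in\HH^\perp$ to obtain $\|\un\|_{s,\od}\le C\|\curl\vn\|_{0,\od}$. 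Since $\curl\un=\curl\vn$ is piecewise constant, the N\'ed\'elec interpolant $\Pi_h\un$ is well defined; the paper then claims $\vn-\Pi_h\un=\Pi_h(\nabla\chi+\mathbf{k})=\nabla\phi_h$ for some $\phi_h\in M_h$, and the discrete constraint $b(\vn,\phi_h)=0$ (with $\varepsilon$ constant in $\od$) yields $\|\vn\|_{0,\od}\le\|\Pi_h\un\|_{0,\od}$. The bound on $\|\Pi_h\un\|_{0,\od}$ comes from the standard interpolation estimate combined with a local inverse inequality $\|\curl\vn\|_{s,K}\le Ch^{-s}\|\curl\vn\|_{0,K}$, which is where the quasi-uniformity hypothesis on the interface mesh is used. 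The paper explicitly cites \cite[Lemma~4.7]{alonsovallilibro} as the model for this argument.

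Your approach is cleaner conceptually and makes the role of the finite-dimensional space $\HH$ transparent, but, as you yourself note, it off-loads the entire difficulty onto a discrete compactness property for N\'ed\'elec elements with tangential boundary condition on $\GI$ only and with the $\varepsilon$-weighted discrete divergence constraint coming from $M_h$; this is not one of the standard configurations and would itself require a proof of roughly the same depth as the lemma. The paper's constructive route avoids that black box at the price of needing the $\H^s$ regularity shift, the commuting interpolant, and the inverse estimate (hence quasi-uniformity). In short: your main argument is valid in outline but leaves the hard analytic step unproved, while your parenthetical alternative is, up to the choice of decomposition, exactly what the paper does.
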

\begin{proof}
{The proof is adapted from \cite[Lemma 4.7]{alonsovallilibro}. The authors have done the case on the which
the conductors do not go through the boundary of $\Omega$.}
Let  $\vn\in\V_{h,\mathrm{D}}$.  In virtue of an orthogonal decomposition of $\L^2(\od)^3$ (see {\cite[Proposition 6.4]{FG})} we can write 
$\vn=\curl\mathbf{Q}+\grad{\chi}+\mathbf{k}$
with
\begin{align*}
\mathbf{Q}\in \HGdcurlood\cap\HGdcurloodGI \cap \HH^{\perp},\quad \chi\in \hunodGI\quad \text{and} \quad \mathbf{k}\in \HH.
\end{align*}
By substituting $\un=\curl\mathbf{Q}$, it is easy verify
the following $\curl\un=\curl\vn$ in $\od$,  $\dive\un=0$ in $\od$, $\un\cdot\nn=0$ on ${\Gd}$
and $\un\times\nn=\cero$ on ${\GI}$. Then, 
\[
\un\in \HGIcurlod\cap\HGddivood,
\]
and consequently $\un\in\H^s(\od)$ for some $s>1/2$ and there exists $C>0$ such that
\[\|u\|_{s,\od}\leq C\|u\|_{\hcurlod}.\] 
Moreover, since $\un\in \HH^{\perp}$ and by using 
\cite[Proposition 7.4]{FG},  we have
\begin{align}\label{fgd}
\|\un\|_{s,\od}\leq C \|\curl\un\|_{0,\od}.
\end{align}
Furthermore, thanks to that $\curl\un=\curl\vn\in \L^{\infty}(\od)^3$ then we can define $\Pi_h\un\in \mathcal{N}_h(\od)$.
Note that there exists $\phi_h\in\M_h$ such that  
$\grad\phi_h=\Pi_h(\grad\chi+\mathbf{k})$, and 
hence
\begin{align}\label{des-aux1}
\|\vn\|_{0,\od}^2\leq\|\vn\|_{0,\od}\|\Pi_h\un\|_{0,\od}.
\end{align}
On the other hand, for all $K\in \mathcal{T}_h$ con $K\subset\od$, we obtain
\begin{align*}
\|\Pi\un-\un\|_{0,K}&\leq Ch^s\left(\|\un\|_{s,K}+\|\curl\vn\|_{s,K}\right)\\
&\leq C\left( h^s\|\un\|_{s,K}+\|\curl\vn\|_{0,K}\right),
\end{align*}
where we have used the local inverse estimate
\begin{align*}
\|\curl \vn\|_{s,K}\leq C h^{-s}\|\curl\vn\|_{0,K}.
\end{align*}
By using \eqref{fgd} and  triangular inequality, we have
\begin{align*}
\|\Pi_h\un\|_{0,\od}\leq C\|\curl\vn\|_{0,\od}
\end{align*}
Finally, the Lemma follows from \eqref{des-aux1}.
\end{proof}
\begin{lemma}\label{lemaEh}
 If we define
\[
 \X_h(\Omega_c):=\left\{\vn\in\mathcal{N}(\Omega_c):\,\, \vn\times\nn=\cero\quad \text{on}\quad\Gc\right\}.
 \]
 Then, the lineal mapping  $\E_h:\X_{\Gc,h}(\oc)\to\V_{h}$  characterized by 
\begin{equation}\nonumber
\begin{split}
&(\E_h\vnc)\vert_{\oc}=\vnc\qquad\forall\vnc\in\X_{h}(\oc),\\
&\int_{\od}(\curl\E_h\vnc)\cdot\curl\wnd = 0 \qquad\forall \vnc\in\X_{h}(\oc)
\quad \forall\wnd\in\V_{h,d}.
\end{split}
\end{equation}
is well defined and bounded.
\end{lemma}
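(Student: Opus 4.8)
The plan is to reduce the lemma to a single finite-element-technical input — an $h$-uniformly bounded discrete extension across the interface $\GI$ — and to obtain everything else from Lemma~\ref{poncairediscreto} and a Poincar\'e inequality in $\od$. Observe first that the two characterizing conditions are linear in $\vnc$, so linearity of $\E_h$ is automatic once it is well defined, and uniqueness is immediate: if $\wn_1,\wn_2\in\V_h$ both restrict to $\vnc$ on $\oc$ and satisfy $\int_{\od}\curl\wn_i\cdot\curl\wnd=0$ for all $\wnd\in\V_{h,d}$, then $\wn_1-\wn_2\in\V_{h,d}$ and $\int_{\od}|\curl(\wn_1-\wn_2)|^2=0$, so $\wn_1-\wn_2=\cero$ by Lemma~\ref{poncairediscreto}. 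Only existence and the $h$-uniform bound remain.

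For existence I would build $\E_h\vnc$ additively in three steps. First, fix a linear operator that extends $\vnc$ to some $\widehat\vn\in\X_h$ with $\widehat\vn\vert_{\oc}=\vnc$ and $\norm{\widehat\vn}_{\hcurlo}\le C\norm{\vnc}_{\hcurloc}$, $C$ independent of $h$ — essentially the zero extension of the N\'ed\'elec degrees of freedom on edges interior to $\od$, whose conformity and uniform boundedness are proved exactly as in \cite[Lemma~5.3]{A1}, using the edge-locality of the degrees of freedom and the quasi-uniformity of the interface mesh. Second, let $\mathbf{z}_1\in\V_{h,d}$ solve
\[
\int_{\od}\curl\mathbf{z}_1\cdot\curl\wnd=-\int_{\od}\curl\widehat\vn\cdot\curl\wnd\qquad\forall\,\wnd\in\V_{h,d};
\]
Lemma~\ref{poncairediscreto} makes the left-hand form coercive on $\V_{h,d}$ uniformly in $h$, so $\mathbf{z}_1$ exists, is unique, and $\norm{\mathbf{z}_1}_{\hcurlod}\le C\norm{\curl\widehat\vn}_{0,\od}$. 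Third, let $\phi_h\in\M_h$ solve
\[
\int_{\od}\varepsilon\,\grad\phi_h\cdot\grad\mu=-\int_{\od}\varepsilon\,\widehat\vn\cdot\grad\mu\qquad\forall\,\mu\in\M_h,
\]
which is uniquely solvable by Lax--Milgram since $\varepsilon$ is uniformly positive and, thanks to $\mu\vert_{\GI^1}=0$, $\mu\mapsto\norm{\grad\mu}_{0,\od}$ is a norm on $\M_h$ equivalent to $\norm{\cdot}_{\M}$; moreover $\norm{\grad\phi_h}_{0,\od}\le C\norm{\widehat\vn}_{0,\od}$. Extending $\mathbf{z}_1$ and $\grad\phi_h$ by zero into $\oc$ (both are admissible: they vanish in $\oc$ and their tangential traces on $\GI$ vanish, the latter because $\phi_h$ is constant on each component of $\GI$), define $\E_h\vnc:=\widehat\vn+\mathbf{z}_1+\grad\phi_h$.

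It then remains to read off the properties. Both corrections vanish in $\oc$, so $(\E_h\vnc)\vert_{\oc}=\vnc$. Since $\mathbf{z}_1\in\V_{h,d}$, $b(\mathbf{z}_1,\mu)=0$, whence $b(\E_h\vnc,\mu)=b(\widehat\vn,\mu)+\int_{\od}\varepsilon\grad\phi_h\cdot\grad\mu=0$ for all $\mu\in\M_h$, i.e. $\E_h\vnc\in\V_h$. Since $\curl\grad\phi_h=\cero$, the third correction leaves the $\curl$ untouched, so $\curl\E_h\vnc=\curl\widehat\vn+\curl\mathbf{z}_1$ in $\od$ and the equation defining $\mathbf{z}_1$ yields $\int_{\od}\curl\E_h\vnc\cdot\curl\wnd=0$ for all $\wnd\in\V_{h,d}$. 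Combining the three norm estimates gives $\norm{\E_h\vnc}_{\hcurlo}\le C\norm{\vnc}_{\hcurloc}$ with $C$ independent of $h$. The decoupling of the second and third steps — gradients contribute no $\curl$, kernel elements contribute nothing to $b$ — is precisely what lets each correction be computed separately rather than through a coupled saddle-point system, and their well-posedness is nothing more than Lemma~\ref{poncairediscreto} and a Poincar\'e inequality. The only genuine obstacle, and the sole place where $h$-dependence must be controlled by a dedicated argument, is the first step: producing an $h$-uniformly bounded conforming discrete extension across the interface $\GI$, which is where the regularity assumptions on the triangulation near $\GI$ are used.
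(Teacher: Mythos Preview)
Your argument is correct and follows a genuinely different route from the paper's proof, so a short comparison is in order.

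The paper also builds $\E_h\vnc$ as ``extension plus correction'', but the two ingredients differ from yours. For the extension across $\GI$, the paper does not zero-extend the N\'ed\'elec degrees of freedom; instead it transports the tangential trace: it sets $\etan(\vnc)$ equal to $\gtc(\vnc)$ on $\GI$ and $\cero$ on $\Gd$, and then applies a continuous right inverse of the discrete tangential trace $\gtd:\mathcal{N}_h(\od)\to\mathrm{H}^{-1/2}(\dive_\tau;\partial\od)$ to obtain $\LL_h\vnc\in\mathcal{N}_h(\od)$ with the correct interface trace. For the correction, the paper does \emph{not} decouple: it solves a single saddle-point problem on $\H_h(\od)\times\M_h$ for $(\znd,\rho)$, with coercivity supplied by Lemma~\ref{poncairediscreto} and the inf-sup condition coming from $\grad(\M_h)\subset\H_h(\od)$, and then sets $\E_h\vnc:=\vnc$ in $\oc$ and $\znd+\LL_h\vnc$ in $\od$.

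Your decoupling --- first fix the $\curl$-orthogonality inside $\V_{h,d}$, then fix the constraint $b(\cdot,\mu)=0$ with a gradient --- is a genuine simplification over the paper's Babu\v{s}ka--Brezzi argument: it replaces one mixed problem by two coercive ones and makes transparent why the corrections do not interfere ($\curl\grad=0$ and $\mathbf{z}_1\in\ker b$). The price you pay is that you must verify separately that each of $\widehat\vn$, the zero-extension of $\mathbf{z}_1$, and the zero-extension of $\grad\phi_h$ lands in $\X_h$ (in particular that $\curl\widehat\vn\cdot\nn=0$ on $\Gd$, which follows from $\widehat\vn\times\nn=\cero$ there); the paper gets all of this at once from the mixed solution. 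Conversely, the paper's use of a trace right inverse avoids appealing to quasi-uniformity of the interface mesh that underlies the uniform bound in \cite[Lemma~5.3]{A1}, though it requires the (standard) existence of a uniformly bounded discrete lifting. Both routes yield the same $h$-uniform bound.
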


\begin{proof}
Let us denote $\gtc:\mathcal{N}_h(\oc)\to\rangoGtoc $ and $\gtd:\mathcal{N}_h(\od)\to\rangoGtod$
by tangential traces on $\hcurloc$ and $\hcurlod$, respectively. 
It follows that linear operator $\etan:\X_{h}(\oc)\to\rangoGtod$ given by 
\begin{equation*}
\etan(\vc):=
\left\{
\begin{array}{ll}
\gtc(\vc)\vert_{\GI}&\textrm{on }\GI,\\
\cero,&\textrm{on }\Gd,
\end{array}
\right.
\end{equation*}
is well defined. Moreover, we have
\begin{equation*}
\norm{\etan(\vc)}_{\rangoGtod} = \norm{\gtc(\vc)}_{\rangoGtoc}\leq C_1\norm{\vc}_{\hcurloc}
\qquad\forall\vc\in\HGccurloc.
\end{equation*}

After considering the continuous right inverse of tangential operator $\gtd$, we can
define the continuous linear operator $\LL_h:\X_{h}(\oc)\to\mathcal{N}(\od)$ given by 
\[
\LL_h(\vc):=(\gtd)^{-1}(\etan(\vc))\quad \forall\vc\in \X_h(\oc)
\]
there holds
\begin{equation}\nonumber
\LL_h(\vnc)\times\nn=\cero\quad \text{on}\quad \Gd\quad\text{and}\quad \LL_h(\vnc)\vert_{\GI}\times\nn=\vnc\vert_{\GI}\times\nn. 
\end{equation}
By denoting $\H_h(\od):=\HHGIcurlod\cap\HH^\bot\cap\mathcal{N}_h(\od)$, we consider the following mixed problem 
\begin{problem}\label{mixto-puertosh}
\noindent Find $\znd\in{\H}_h(\od)$ and $\rho\in{M}_h$ such that 
\begin{equation*}
\begin{split}
\int_{\od}\curl\znd\cdot\curl\wnd + b(\wnd,\rho) &= - \int_{\od}\curl(\LL_h(\vnc))\cdot\curl\wnd\qquad
\forall\wn\in \H_h(\od)\\
b(\znd,\mu) &= -b(\LL_h(\vnc),\mu)\qquad\forall\mu\in {M}_h(\od).
\end{split}
\end{equation*}
\end{problem}
Now, we proceed to show that the previous problem is well-posedness. 
 By using the Lemma \ref{poncairediscreto} 
the bilinear form given by
\begin{align*}
(\vnd,\wnd)\mapsto\int_{\od}\curl\vnd\cdot\curl\wnd,
\end{align*}
is coercive on  $\V_{h,d}$. Furthermore, the discrete \textit{inf-sup} conditions is
satisfied. In fact: by noting  $\textbf{grad}(\M_h)\subset{\H}_h(\od)$. Thus, we obtain
\begin{equation}\nonumber
\begin{split}
\sup_{\vnd\in{\H}_h(\od)}\dfrac{b(\vnd,\mu)}{\norm{\vnd}_{\hcurlod}}
&\geq \dfrac{b(\grad\mu,\mu)}{\norm{\grad\mu}_{\hcurlod}}
= \varepsilon_{0}\norm{\grad\mu}_{\ldosod^3}\quad\forall\mu\in M_h.
\end{split}
\end{equation}
It follows from the  Babuska-Brezzi theory  that the Problem \ref{mixto-puertosh}
has a unique solution, which satisfies 
\begin{equation}\nonumber
\norm{\znd}_{\hcurlod}\leq C\norm{\vc}_{\hcurloc}\qquad\forall\vnc\in \X_{h}(\oc).
\end{equation}
 Hence, we define 
\begin{equation}\nonumber
\E_h\vnc:=\left\{
\begin{array}{ll}
\vnc&\textrm{in }\oc,\\
\znd+\LL_h\vnc&\textrm{in }\od,
\end{array}
\right.
\end{equation}
there holds
\[
\vnc\vert_{\GI}\times\nn = \LL_h(\vnc)\vert_{\GI}\times\nn\quad\text{and}\quad \znd\vert_{\GI}\times\nn=\cero,
\]
from which the result follows. 
\end{proof}
The following result may be proved in much the same way as Lemma \ref{dgarding}. 
This is due to Lemma \ref{poncairediscreto} and Lemma \ref{lemaEh}.
\begin{lemma}
There exist positive constants $\widehat\gamma$ and $\widehat\alpha$ such that
\begin{equation}\label{dgarding}
\int_{\Omega}\dfrac{1}{\mu}\vert\curl\vn\vert^2 + \widehat\gamma\int_{\oc}\sigma\vert\vn\vert^2 
\geq \widehat\alpha\norm{\vn}_{\hcurlo}^2\qquad\forall\vn\in V_h.
\end{equation}
\end{lemma}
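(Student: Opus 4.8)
The plan is to imitate the proof of Lemma~\ref{lemmaGIC} almost verbatim, replacing the discrete Friedrichs--Poincar\'e estimate used there by Lemma~\ref{poncairediscreto} and the interior extension used there by the operator $\E_h$ constructed in Lemma~\ref{lemaEh}. Given $\vn\in V_h$, I would set $\vnc:=\vn\vert_{\oc}\in\X_h(\oc)$ and introduce the corrected field $\widetilde{\wn}:=\vn-\E_h\vnc$. Because $\E_h\vnc\in V_h$ and $(\E_h\vnc)\vert_{\oc}=\vnc$ by Lemma~\ref{lemaEh}, it follows at once that $\widetilde{\wn}\in V_h$ and $\widetilde{\wn}=\cero$ in $\oc$, so that $\curl\widetilde{\wn}$ is supported in $\od$.

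The first---and only delicate---step is to verify that $\widetilde{\wn}\vert_{\od}$ belongs to the space $\V_{h,\mathrm{D}}$ on which Lemma~\ref{poncairediscreto} is valid: its tangential trace on $\GI$ vanishes since $\widetilde{\wn}=\cero$ in $\oc$, the condition on $\Gd$ and the relation $b(\widetilde{\wn},\varphi)=0$ for all $\varphi\in\M_h$ are inherited from $\widetilde{\wn}\in V_h$, and the orthogonality of $\curl\widetilde{\wn}\vert_{\od}$ to the harmonic space $\HH$ is exactly what the second characterizing identity of $\E_h$ in Lemma~\ref{lemaEh} delivers. Once this is granted, Lemma~\ref{poncairediscreto} gives $\|\widetilde{\wn}\|_{0,\od}\le C\|\curl\widetilde{\wn}\|_{0,\od}$, hence $\|\widetilde{\wn}\|_{\hcurlod}\le C\|\curl\widetilde{\wn}\|_{0,\od}$.

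From here the argument is a routine chain of triangle inequalities. Using the uniform boundedness of $\E_h$ one has
\[
\|\curl\widetilde{\wn}\|_{0,\od}\le\|\curl\E_h\vnc\|_{0,\od}+\|\curl\vn\|_{0,\od}\le C\left(\|\vnc\|_{\hcurloc}+\|\curl\vn\|_{0,\od}\right),
\]
so that, writing $\vn=\E_h\vnc+\widetilde{\wn}$,
\[
\|\vn\|_{\hcurlo}^2\le 2\|\E_h\vnc\|_{\hcurlo}^2+2\|\widetilde{\wn}\|_{\hcurlo}^2\le C\left(\|\vnc\|_{\hcurloc}^2+\|\curl\vn\|_{0,\od}^2\right).
\]
Finally I would absorb the right-hand side into the two terms of the claimed inequality: since $\|\vnc\|_{\hcurloc}^2=\|\vn\|_{0,\oc}^2+\|\curl\vn\|_{0,\oc}^2$, the lower bound on $\sigma$ on $\oc$ gives $\|\vn\|_{0,\oc}^2\le C\int_{\oc}\sigma\abs{\vn}^2$, while $\|\curl\vn\|_{0,\oc}^2+\|\curl\vn\|_{0,\od}^2=\|\curl\vn\|_{0,\Omega}^2\le C\int_{\Omega}\tfrac1\mu\abs{\curl\vn}^2$ because $\mu$ is bounded above on $\Omega$. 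Collecting everything yields $\|\vn\|_{\hcurlo}^2\le C\left(\int_{\oc}\sigma\abs{\vn}^2+\int_{\Omega}\tfrac1\mu\abs{\curl\vn}^2\right)$, which is the asserted estimate with $\widehat\alpha=1/C$ and $\widehat\gamma=C$.

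The hard part will be the bookkeeping in the first step, namely checking that $\widetilde{\wn}\vert_{\od}$ meets \emph{every} defining condition of $\V_{h,\mathrm{D}}$---especially the orthogonality to the cohomology space $\HH$, which is precisely the reason the extension operator of Lemma~\ref{lemaEh} was built with its second characterizing identity. After that point only the triangle inequality and the standard bounds on the coefficients $\sigma$ and $\mu$ are needed.
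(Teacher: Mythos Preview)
Your proposal follows the paper's proof essentially verbatim: the paper also sets $\vnc:=\vn\vert_{\oc}$, defines $\widetilde\wn:=\vn-\E_h\vnc$, observes that $\widetilde\wn=\cero$ in $\oc$, $\widetilde\wn\in V_h$, and $\widetilde\wn\vert_{\od}\in\V_{h,\mathrm D}$, then invokes Lemma~\ref{poncairediscreto} together with the uniform boundedness of $\E_h$ and concludes ``proceeding as in Lemma~\ref{lemmaGIC}''. Your write-up simply expands the last phrase into the explicit chain of triangle inequalities and coefficient bounds, which is exactly what the paper leaves to the reader.

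One small caution: when you say that the second characterizing identity of $\E_h$ in Lemma~\ref{lemaEh} delivers ``the orthogonality of $\curl\widetilde\wn\vert_{\od}$ to the harmonic space $\HH$'', note that the identity actually reads $\int_{\od}\curl(\E_h\vnc)\cdot\curl\wnd=0$ for all $\wnd\in\V_{h,\mathrm D}$, i.e.\ an orthogonality among \emph{curls}, not an $L^2$-orthogonality of $\widetilde\wn$ (or of $\curl\widetilde\wn$) to $\HH$. The paper does not spell out how membership in $\V_{h,\mathrm D}$ is verified either---it simply asserts $\widetilde\wn\vert_{\od}\in\V_{h,\mathrm D}$---so your plan is no less rigorous than the original, but be careful not to attribute to that identity more than it literally says.
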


\begin{proof}
Let $\vn\in\V_h$, considering $\vnc:=\vn\vert_{\oc}$ and $\E_h\vnc$ given by 
Lemma~\ref{lemaEh}, we can define $\widetilde\wn:=\vn-\E_h\vnc$, then
\[
\widetilde{\wn}=\cero \textrm{ in $\oc$},\qquad\widetilde{\wn}\in\V_h,\qquad\widetilde{\wn}\vert_{\od}\in\V_{h,d}.
\]
Thus, by using the Lemma \ref{poncairediscreto}, the continuity of $\E_h$ and 
proceeding as in Lemma \ref{lemmaGIC}, it is deduced the result.
\end{proof}

Consequently, we can use the results of Section~\ref{discreto} to conclude that the fully-discrete approximation of the 
Problem \ref{mix1-int} has a unique solution $(\un_{h}^n,\lambda_{h}^n)\in X_h\times M_h,\ n=1,\dots,N$.
Now, our next to goal is to obtain error fully-discrete scheme. Before, we recall that $\lambda=0$ (see \cite[Theorem 2.4]{blrs:13}). 
{As in subsection \ref{IC} and by assuming $\un\in \H^1(0,T;\X)\cap\H^2(0,T;\ldoso^3)$ (see \eqref{defXMI}), we can obtain  
similar results as in  Theorem  \ref{cea-eddy} and Theorem \ref{errordu}. 
These results allow us to obtain
the asymptotic error estimates. In fact, fixing an index $r>\frac{1}{2}$
and considering  $\mathcal{X}:=\mathbf{H}^r(\curl,\Omega)\cap \X$ (see \eqref{Hr}),
according to \cite[Lemma 2.2]{BPR3},  the N\'ed\'elec  interpolant operator $I_h^{\mathcal{N}}:\mathcal{X} \to \X_h$ is well defined
and we can easily obtain an analogous result to Corollary~\ref{coroconvucd}. 
Thus, we easily obtain similar error estimates to those that were given in Remark \ref{variables} for the approximation of the electric and magnetic field at each 
time step.
Finally, for some numerical results of this subsection that confirm the theoretical result obtained
in this work, we refer the reader to \cite[Section 4]{blrs:13}.}


\textbf{Funding}\\
This work was partially supported by Colciencias through the 727 call, University of Cauca through project VRI ID 5243 and by Universidad Nacional de Colombia through Hermes project $46332$.

\textbf{Availability of data and materials}\\
Not applicable.

\textbf{Competing interests}\\
The authors declare that they have no competing interests.

\textbf{Authors contributions}\\
The authors declare that the work was realized in collaboration with the same responsibility. All authors read and
approved the final manuscript.



\end{document}